\documentclass[11pt]{amsart}
\usepackage{amsmath, amsfonts, amsthm, amssymb}
\usepackage[all,cmtip,line]{xy}
\usepackage{array}
\usepackage{enumerate}
\usepackage{srcltx} 

\oddsidemargin .5in
\evensidemargin .5in
\textwidth      5.8in
\topmargin  1.0cm
\textheight     8.5in
\setlength{\footskip}{0.5in}

\newtheorem{thm}{Theorem}[section]
\newtheorem{lem}{Lemma}[section]
\newtheorem{prop}[lem]{Proposition}
\newtheorem{cor}[lem]{Corollary}

\newtheorem{rem}[lem]{Remark}
\newtheorem{conj}{Conjecture}[section]

\numberwithin{equation}{section}




\newcommand{\bR}{ \mathbb{R}} 






\newcommand \eps{\varepsilon}


\newlength{\originalbase}
\setlength{\originalbase}{\baselineskip}
\newcommand{\spacing}[1]{\setlength{\baselineskip}{#1\originalbase}}

\begin{document}               

\newcommand{\avint}{{- \hspace{-3.5mm} \int}}

\spacing{1}

\title{ On the constant scalar curvature K\"ahler metrics(I)---\\Apriori estimates }
\author{Xiuxiong Chen, Jingrui Cheng}

\date{\today}
\maketitle

\centerline{Dedicated to Sir Simon Donaldson for his 60th Birthday}

\medskip

\begin{abstract}
In this paper, we derive apriori estimates for constant scalar curvature K\"ahler metrics on a compact K\"ahler manifold. We show that higher order derivatives can be estimated in terms of a $C^0$ bound for the K\"ahler potential.
We also discuss some local versions of these estimates which can be of independent interest.
\end{abstract}


\section{Introduction}
This is the first of a series of three papers in the study of  of  constant scalar curvature K\"ahler metrics (cscK metrics), following a program outlined in \cite{chen15}.
In this paper, we focus on establishing a priori estimates for cscK metrics in compact K\"ahler manifold without boundary. Our estimates can be easily adapted to extremal K\"ahler metrics and for simplicity of presentations, we leave such an extension to the interested readers except to note that for extremal K\"ahler metrics, its scalar curvature is a priori bounded
depending on K\"ahler class.  In the subsequent two papers, we will use these estimates (and its generalizations) to  study the Calabi-Donaldson  theory on the geometry of extremal K\"ahler metrics,
in particular, to establish the celebrated conjecture of Donaldson on geodesic stability as we as the well known
properness conjecture relating the existence of cscK metrics with the properness of K energy functional. \\

 In  \cite{chen15}, the first named author advocates a new continuity path which links the cscK equation to certain second order elliptic equation, apparently inspired by  the success of the classical continuity path for K\"ahler Einstein (KE) metrics and Donaldson's continuity path for conical KE metrics. In general,   apriori estimates are usually the prelude to the success of any continuity path aiming to obtain existence results of cscK metrics since openness
is already established in \cite{chen15}.   \\

Let us recall a conjecture made earlier by the first named author (c.f. \cite{chenhe12}).
\begin{conj}  Let $(M,[\omega]) $ be any compact K\"ahler manifold without boundary. Suppose $ \omega_\varphi$ is a constant scalar curvature K\"ahler metric. If $\varphi$ is uniformly bounded, then any higher derivative estimate of $\varphi$ is also uniformly bounded.
\end{conj}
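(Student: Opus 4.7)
The cscK condition $R(\omega_\varphi)=\underline R$ is a fourth order fully nonlinear equation, but it decouples naturally into the system
\[
(\omega+i\partial\bar\partial\varphi)^n = e^{F}\omega^n,\qquad \Delta_\varphi F = \mathrm{tr}_{\omega_\varphi}\mathrm{Ric}(\omega)-\underline R,
\]
where $F=\log(\omega_\varphi^n/\omega^n)$ is the Ricci potential of $\omega_\varphi$. Since $\int e^F\omega^n=[\omega]^n$ is fixed, $e^F$ is a priori $L^1$. My plan is: (i) upgrade this to a two-sided bound $\|F\|_{L^\infty}\le C(\|\varphi\|_{C^0})$; (ii) deduce a bound on $n+\Delta\varphi$ by a Yau/Aubin-type maximum principle; (iii) run complex Evans--Krylov and bootstrap. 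Step (i) is the heart of the argument; once it is in place, steps (ii)--(iii) are essentially classical.

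\textbf{From an $F$-bound to higher derivatives.} Assume for the moment $\|F\|_{L^\infty}\le C$. Applying $\Delta_\varphi$ to the standard auxiliary function $\log(n+\Delta\varphi)-A\varphi$ and using the Chern--Lu/Yau inequality, one gets, with $A$ large enough depending on a lower bound for the bisectional curvature of $\omega$,
\[
\Delta_\varphi\bigl(\log(n+\Delta\varphi)-A\varphi\bigr) \;\ge\; \tfrac12\,\mathrm{tr}_{\omega_\varphi}\omega - C - \Delta_\varphi F/(n+\Delta\varphi),
\]
up to harmless terms. Using $\|F\|_{L^\infty}$ and $\|\varphi\|_{C^0}$, the maximum principle then yields $n+\Delta\varphi\le C$, and together with the Monge--Amp\`ere equation this gives $C^{-1}\omega\le\omega_\varphi\le C\omega$. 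Complex Evans--Krylov applied to $(\omega+i\partial\bar\partial\varphi)^n=e^F\omega^n$ promotes this to $\varphi\in C^{2,\alpha}$; then Schauder on the linear elliptic equation for $F$ gives $F\in C^{2,\alpha}$, which fed back into the Monge--Amp\`ere equation yields $\varphi\in C^{4,\alpha}$, and bootstrapping closes the estimates at every order.

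\textbf{The main obstacle: controlling $F$.} The delicate step is bounding $F$ knowing only $\|\varphi\|_{C^0}$, since the equation for $F$ involves $\mathrm{tr}_{\omega_\varphi}\mathrm{Ric}(\omega)$ whose size is governed precisely by the quantity $n+\Delta\varphi$ we have not yet estimated. My plan is to attack this in two stages. First, derive integral bounds of the form $\int e^{p F}\omega^n\le C(p)$ for all $p\ge 1$: one considers test functions built from $F$ against the coupled system and uses an Alexandrov--Bakelman--Pucci style inequality together with $\|\varphi\|_{C^0}$ to absorb the curvature term; equivalently, one can treat the quantity $-F+\lambda\varphi$ as a subsolution of a Monge--Amp\`ere-type inequality and apply a comparison/ABP argument. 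Second, pass from $L^p$ to $L^\infty$ by a De Giorgi/Moser iteration applied to $(F-k)_+$ on super-level sets, where the drift $\mathrm{tr}_{\omega_\varphi}\mathrm{Ric}(\omega)$ is handled using the integrability of $\mathrm{tr}_{\omega_\varphi}\omega$ coming from the $L^p$ bound on $e^F$ and Young's inequality. A parallel argument, using that $-F$ sits below a subharmonic correction, yields the lower bound.

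\textbf{Summary.} The logical order is: $C^0$ bound $\Rightarrow$ integral estimates on $e^{\pm pF}$ (by ABP / integration by parts on the coupled system) $\Rightarrow$ two-sided $L^\infty$ bound on $F$ (by Moser iteration) $\Rightarrow$ $C^2$ bound on $\varphi$ (Yau/Aubin auxiliary function) $\Rightarrow$ $C^{2,\alpha}$ by Evans--Krylov $\Rightarrow$ $C^\infty$ by bootstrapping. The step whose technical content I expect to dominate is the passage from $\|\varphi\|_{C^0}$ to $\|F\|_{L^\infty}$, because it is the only place where the specifically fourth-order nature of the cscK equation, as opposed to the Monge--Amp\`ere equation, makes itself felt; all subsequent steps are adaptations of what is by now standard technology for complex Monge--Amp\`ere equations with bounded right-hand side.
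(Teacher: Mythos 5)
Your overall scaffolding---control $F$ first, then $n+\Delta\varphi$, then Evans--Krylov and bootstrap---matches the paper, and you correctly identify the passage from $\|\varphi\|_{C^0}$ to control of $F$ as the genuinely new difficulty. But two of your intermediate steps have real gaps, and they are precisely the places where the paper has to work hardest.

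\textbf{The Yau/Aubin maximum principle does not close.} The inequality you write for $\Delta_\varphi\bigl(\log(n+\Delta\varphi)-A\varphi\bigr)$ is not what the calculation produces. Differentiating $\log\det(g_{i\bar j}+\varphi_{i\bar j})=F+\log\det g_{i\bar j}$ twice and tracing gives a term $\sum_i F_{i\bar i}=\Delta F$, the \emph{background} Laplacian of $F$, not $\Delta_\varphi F$. Only $\Delta_\varphi F$ is controlled by the second cscK equation; $\Delta F$ is not controlled by $\|F\|_{L^\infty}$ or by anything else at your disposal at that stage, so the pointwise maximum principle on $\log(n+\Delta\varphi)-A\varphi$ simply does not yield a $C^2$ bound. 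This is not a minor technicality---it is the reason the classical $C^2$ estimate for complex Monge--Amp\`ere requires $\Delta F$ bounded below, and the reason He's adaptation of Pogorelov's example (cited in the paper) produces entire Calabi--Yau potentials on $\mathbb{C}^n$ that are not $C^2$. The paper gets around this with a substantially different mechanism: a gradient estimate $|\nabla\varphi|^2\le C e^{F}$ (their Theorem 2.2, via a delicate max-principle on $e^{-(F+\lambda\varphi)+\frac12\varphi^2}(|\nabla\varphi|^2+K)$ featuring a crucial cancellation), then $W^{2,p}$ estimates for all $p$ by integrating against $u^{2p}$ and integrating the $\Delta F$ term by parts (Section 3), and finally a Moser iteration on the specially constructed quantity $u=e^{F/2}|\nabla_\varphi F|_\varphi^2+K(n+\Delta\varphi)$, where the weight $e^{F/2}$ (i.e.\ $B'=\tfrac12$) is chosen precisely so that the worst third-order cross terms cancel (Section 4). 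None of these ingredients appear in your plan, and without them the second-derivative estimate does not follow from $\|F\|_{L^\infty}$ alone.

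\textbf{The $\|F\|_{L^\infty}$ bound is also not achieved the way you sketch.} The paper does \emph{not} get an upper bound on $F$ by applying ABP and Moser directly to the equation $\Delta_\varphi F=-\underline{R}+\mathrm{tr}_\varphi\mathrm{Ric}$. The difficulty is that the ABP estimate for $\Delta_\varphi$ requires $L^{2n}$ control of $e^{-F/n}\cdot(\text{RHS})^-$, and $\mathrm{tr}_\varphi\mathrm{Ric}$ is not a priori integrable enough to close such an argument without further input. The paper instead uses global variational structure: cscK metrics minimize the K-energy (Berman--Berndtsson), the K-energy splits as entropy $\int e^F F$ plus a $J$-functional controlled by $\|\varphi\|_{C^0}$ (Chen's decomposition), hence the entropy is bounded. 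Then Section 5 introduces an auxiliary potential $\psi$ solving a Monge--Amp\`ere equation with right side $\propto e^F\Phi(F)$ (Yau's theorem), appeals to Tian's $\alpha$-invariant to control $\int e^{-\alpha\psi}$ and $\int e^{-\alpha\varphi}$, and applies ABP to the quantity $F+\varepsilon\psi-\lambda\varphi$ (with a localizing cut-off). The presence of $\psi$---and the fact that $\Delta_\varphi\psi$ produces a $\Phi^{1/n}(F)$ term via the arithmetic--geometric inequality---is what makes the bad set $\{F\text{ large}\}$ negligible in the ABP integral. This is a different and considerably more global mechanism than the ABP/De Giorgi iteration you describe, and the missing ingredients (K-energy minimization, $\alpha$-invariant, the auxiliary potential $\psi$) are essential rather than cosmetic.
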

It is worthwhile to give a brief review of the history of this subject and hopefully,  this will make it self-evident why this conjecture is interesting. 
A special case of constant scalar curvature K\"ahler metric is the well known KE metric which has been the main focus of K\"ahler geometry since the inception of the celebrated Calabi conjecture
on K\"ahler Einstein metrics in 1950s. In 1958, E. Calabi published the fundamental $C^3$ estimate for Monge-Amp$\grave{\text{e}}$re equation \cite{calabi58} which later played a crucial role in Yau's seminal resolution of Calabi conjecture \cite{Yau78}
in 1976 when the first Chern class is either negative or zero (In negative case, T. Aubin has an independent proof) .  This work of Yau is so influential that generations of experts in K\"ahler geometry afterwards largely followed the same route: Securing a $C^0$ estimate first, then move on to obtain $C^2,\; C^3$ estimates etc. 
In the case of positive first Chern class, G. Tian proved Calabi conjecture in 1989 \cite{tian87} for Fano surfaces when the automorphism group is reductive.  It is well known that there are obstructions to the existence of KE metrics in Fano manifolds; around 1980s, Yau proposed a conjecture
which relates the existence of K\"ahler Einstein metrics to the stability of underlying tangent bundles. This conjecture was settled in 2012 through a series of work  
 CDS \cite{cds12-1}  \cite{cds12-2}  \cite{cds12-3} and we refer interested readers to this set of papers for further references in the subject of KE metrics. The proof of CDS's work is itself quite involved as it sits at the intersection of  several different subjects: algebraic geometry, several complex variables, geometry  analysis and metric differential geometry etc.
 
 
 To move beyond CDS's work on K\"ahler Einstein metrics, one direction is the study of the existence problem of cscK metrics which satisfy a 4th order PDE.  The following is a conjecture  which  is a refinement of Calabi's original idea that every K\"ahler class
 must have its own best, canonical representatives.
 
 \begin{conj} [Yau-Tian-Donaldson] Let $[\omega]=C_1({L})$ for some holomorphic line bundle $L$ on a K\"ahler manifold $M$, then the underlying  $(M, L)$ is K-stable if and only if there exists a constant scalar curvature K\"ahler metric in $[\omega]$. 
\end{conj}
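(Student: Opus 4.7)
The plan is to attack the YTD conjecture by combining the continuity path of \cite{chen15} with the apriori estimates of the present series of papers. Since openness along the path is already established in \cite{chen15}, the whole argument reduces to closedness: one must show that as the parameter approaches the cscK endpoint the solution does not degenerate. The apriori estimates conjectured just above (Conjecture 1.1) reduce closedness to a single analytic input, namely a uniform $C^0$ bound on the K\"ahler potential $\varphi$ along the path.

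To obtain such a $C^0$ bound from the stability hypothesis, I would route through the K-energy functional $\mathcal{K}$: K-stability should be used to establish that $\mathcal{K}$ is proper (coercive) on the space of K\"ahler potentials modulo the identity component of the automorphism group, and properness then forces the solutions along the continuity path to remain in a bounded region of an appropriate completion of that space, from which one extracts a uniform $C^0$ bound. The converse implication, that existence of a cscK metric forces K-stability, would come from lower bound estimates for the Donaldson--Futaki invariant of test configurations, comparing it with the asymptotic slope of $\mathcal{K}$ along the geodesic rays generated by such configurations.

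Concretely, I would organize the argument into three stages: (i) the apriori estimate program of the present paper (Conjecture 1.1), which reduces everything to $C^0$ control of $\varphi$; (ii) an analytic input relating K-stability to properness of $\mathcal{K}$, which in turn relies on convexity of $\mathcal{K}$ along weak geodesics and a compactness theory in the Mabuchi/$d_1$ metric; and (iii) a bridge from properness of $\mathcal{K}$ along the continuity path to an actual $C^0$ bound, combining the structure of the continuity equation with energy estimates, and the invariance of $\mathcal{K}$ under the automorphism group.

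The main obstacle is stage (ii): turning an algebro-geometric stability condition on test configurations into coercivity of a nonlinear functional on an infinite dimensional space of potentials is the hardest part of the whole program, and historically is where most of the new ideas are needed. A secondary but still substantial obstacle is stage (i), and especially the $C^0$ estimate: unlike in Yau's solution of the Calabi conjecture, where the Monge--Amp\`ere structure yields $C^0$ more or less directly, here the fourth order nature of the cscK equation means that even a $C^0$ bound is delicate, and will likely have to be extracted by pivoting off the auxiliary second order equation supplied by the continuity path rather than from the cscK equation itself.
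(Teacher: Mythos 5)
This statement is labeled a \emph{conjecture} in the paper, and the paper supplies no proof of it; it appears in the introduction purely as motivation. The paper's actual contribution is the package of apriori estimates (Theorems \ref{t1.1}--\ref{t1.2}) that would form your stage (i). So there is no ``paper's own proof'' against which to check your argument, and your text itself is a strategy outline rather than a proof: every stage is accompanied by its outstanding obstacle, and in particular stage (ii) --- converting K-stability of $(M,L)$ into properness/coercivity of the K-energy $\mathcal{K}$ on the space of potentials --- is exactly the gap that remains open. That step is not accomplished here, nor do the authors claim to settle it in the announced sequels: they explicitly target the \emph{properness conjecture} (existence of cscK $\Leftrightarrow$ properness of $\mathcal{K}$) and Donaldson's geodesic stability conjecture, which are intermediate analytic formulations, not the algebro-geometric YTD statement itself.

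On the parts of your outline that can be matched against the paper: your reduction in stage (i) is essentially correct and is precisely what this paper does, with one refinement worth noting. The paper does not go directly from $C^0(\varphi)$ to higher regularity by $W^{2,p}$ theory on the Monge--Amp\`ere equation alone; instead it leans heavily on the \emph{second} equation $\Delta_\varphi F = -\underline{R} + \mathrm{tr}_\varphi \mathrm{Ric}$ to obtain $F$ bounds and a gradient estimate (Theorems \ref{t2.1}, \ref{t2.2}), then bootstraps via integral estimates for $n+\Delta\varphi$ (Section 3) and Moser-type iteration on $|\nabla_\varphi F|_\varphi^2 + K(n+\Delta\varphi)$ (Section 4). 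You correctly anticipated that the auxiliary second order equation would be the pivot, but the paper actually goes further: Section 5 shows that even the $C^0$ bound on $\varphi$ can be replaced by the weaker hypothesis of an entropy bound $\int_M e^F F\,dvol_g$, via the $\alpha$-invariant, Yau's theorem for an auxiliary potential $\psi$, and an Alexandrov-type maximum principle. In short, your stage (i) is right in spirit but underestimates what the paper achieves (entropy control rather than $C^0$ control as the minimal input), and your stages (ii)--(iii) identify genuine open problems that this paper does not address.
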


One conspicuous and memorable feature of CDS's proof is the heavy use of Cheeger-Colding
theory on manifold with Ricci curvature bounded from below. The apriori bound on Ricci curvature
for KE metrics make such an application of Cheeger Colding theory seamlessly smooth and effective.
 However, if we want to attack this general conjecture,  there will be a  dauntingly high wall to climb
 since there is no a priori bound on Ricci curvatue. Therefore,  the entire Cheeger-Colding theory needs to be re-developed if it is at all feasible. On the other hand, there is a second, less visible but perhaps even more significant feature of CDS's proof is: The whole proof is designed for constant scalar curvature K\"ahler metrics and the use of algebraic criteria and Cheeger Colding theory is to conclude  that the
 a $C^0$ bound holds for K\"ahler potential so that we can apply the apriori estimates for complex KE metrics developed by Calabi, Yau and others.  Indeed, this is exactly how we make use of Cheeger Colding theory and stability condition in CDS's proof
 to nail down a $C^0$  estimate on potential.  Unfortunately, such an estimate is missing in this generality for a 4th order fully nonlinear equation. Indeed, as noted by other famous authors in the subject as well,  the difficulty permeates the cscK theory are two folds:  one cannot use maximal principle from PDE point of view and one can not have much control of metric from the bound of the scalar curvature. \\
 
  In this paper,  we want to tackle this challenge and we prove
\begin{thm}\label{t1.1} If $(M, \omega_\varphi)$ is a cscK metric, where $\omega_{\varphi}=\omega_0+\sqrt{-1}\partial\bar{\partial}\varphi$, 
then all higher derivatives of the K\"ahler potential $\varphi$ can be estimated
 in terms of an upper bound of $\int_M\log\big(\frac{\omega_{\varphi}^n}{\omega_0^n}\big)\omega_{\varphi}^n$.
\end{thm}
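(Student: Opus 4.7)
The plan is to decouple the fourth-order cscK equation into a coupled system of two second-order equations for an auxiliary pair $(\varphi, F)$,
\begin{equation*}
(\omega_0 + \sqrt{-1}\partial\bar{\partial}\varphi)^n = e^F \omega_0^n, \qquad \Delta_\varphi F = \mathrm{tr}_\varphi \mathrm{Ric}(\omega_0) - \underline{R},
\end{equation*}
where $\underline{R}$ is the topologically determined constant value of the scalar curvature. In these variables the hypothesis becomes exactly a uniform upper bound on the entropy $\int_M F e^F \omega_0^n$. The strategy is to estimate $\varphi$ first, then $F$, then the complex Hessian, and finally bootstrap, tracking at each stage only a dependence on this entropy and the background data $(\omega_0,[\omega])$.

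\emph{Step 1 ($C^0$ bound for $\varphi$).} After normalizing $\sup_M \varphi = 0$, the entropy bound places the Monge-Amp\`ere density $e^F$ in an Orlicz class strictly stronger than $L^1$, and a Ko\l{}odziej-type pluripotential estimate for the complex Monge-Amp\`ere equation then yields $\|\varphi\|_{L^\infty} \leq C$ depending only on the entropy and background.

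\emph{Step 2 ($L^\infty$ bound for $F$).} This is the crux of the argument and the step I expect to be the main obstacle. The function $F$ is governed by a linear elliptic equation whose coefficient metric $\omega_\varphi$ and right-hand side $\mathrm{tr}_\varphi \mathrm{Ric}(\omega_0)$ are a priori uncontrolled, so Moser iteration and Sobolev embedding are not directly available. The idea is to form an auxiliary function combining $F$, $\varphi$ and an exponential factor such as $e^{-\lambda\varphi}$, compute $\Delta_\varphi$ of it, and arrange that the favourable-sign term $\mathrm{tr}_\varphi \omega_0$ (produced by differentiating the exponential) appears with a large coefficient. The arithmetic-geometric mean inequality $\mathrm{tr}_\varphi \omega_0 \geq n e^{-F/n}$ then lets this term dominate at an interior maximum once $F$ is too large, forcing $F \leq C$. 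A matching lower bound follows by combining this upper bound with the entropy hypothesis, or by a parallel argument applied at a minimum of a dual auxiliary function.

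\emph{Step 3 (Laplacian estimate) and Step 4 (bootstrap).} Once $F \in L^\infty$, the Monge-Amp\`ere density is pinched between positive constants, and the Chern-Lu / Aubin-Yau maximum principle applied to $\log \mathrm{tr}_{\omega_0}\omega_\varphi - B\varphi$, with $B$ exceeding the bisectional curvature of $\omega_0$, delivers a uniform bound on $\mathrm{tr}_{\omega_0}\omega_\varphi$, so $\omega_\varphi$ is uniformly equivalent to $\omega_0$. The Monge-Amp\`ere equation is then uniformly elliptic with bounded right-hand side, so the complex Evans-Krylov theorem gives $\varphi \in C^{2,\alpha}$; Schauder estimates applied to the linear equation for $F$ give $F \in C^{2,\alpha}$; and a standard bootstrap yields uniform $C^{k,\alpha}$ bounds for every $k$.
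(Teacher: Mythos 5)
Your high-level plan (pass from entropy to $\|\varphi\|_0$ and $\|F\|_0$, then to a Laplacian bound, then bootstrap) is the same outline as the paper's, but each of the three non-trivial steps contains a gap that, in the paper, is precisely where the work lives.

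\textbf{Step 1.} The entropy bound gives $e^F\in L\log L$, which is \emph{not} in the range of Ko\l odziej's theorem; that theorem needs roughly $e^F\in L(\log L)^p$ with $p>n$, or $e^F\in L^{1+s}$. You cannot simply cite a ``Ko\l odziej-type pluripotential estimate'' here. The paper's actual argument (Theorem~\ref{t5.2}, Corollary~\ref{c5.3}) is to introduce an \emph{auxiliary} K\"ahler potential $\psi$ solving $\omega_\psi^n = A^{-1}e^F\Phi(F)\,\omega_0^n$, apply the $\alpha$-invariant (Tian) to both $\varphi$ and $\psi$, and run an Alexandrov maximum principle argument on $e^{\delta(F+\varepsilon\psi-\lambda\varphi)}$ (with a cut-off) to obtain $\int_M e^{qF}\,dvol_g \le C$ for \emph{every} $q<\infty$. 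Only then does the $C^0$ bound on $\varphi$ follow (via Ko\l odziej, or Blocki's elementary Alexandrov argument). The passage from $L\log L$ to $L^q$ for all $q$ is a genuine theorem and cannot be elided.

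\textbf{Step 2.} The maximum-principle scheme you describe --- combine $F$, $\varphi$, an exponential factor, and use $\mathrm{tr}_\varphi\omega_0 \ge n\,e^{-F/n}$ --- produces a \emph{lower} bound for $F$, not the upper bound you claim. Indeed, at a critical point of $F+\lambda\varphi$ one obtains an inequality of the form $0 \lesssim \lambda n - \lambda\,\mathrm{tr}_\varphi\omega_0 \lesssim \lambda n - \lambda n\,e^{-F/n}$, which forces $e^{-F/n}$ to be bounded above, i.e.\ $F\ge -C$. This is exactly Proposition~\ref{p2.1} of the paper. To get $F\le C$, the favourable term must instead grow with $n+\Delta\varphi \ge n\,e^{F/n}$, and the paper manufactures such a term either by working with $e^{F-\lambda\varphi}(K+|\nabla\varphi|^2)$ and paying with $\|\nabla\varphi\|_0$ (Theorem~\ref{t2.1}), or via the $\psi$--$\alpha$-invariant argument above (Corollary~\ref{c5.4}). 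Your sketch does not contain the needed ingredient.

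\textbf{Step 3.} This is the most serious error. The Aubin--Yau / Chern--Lu maximum principle applied to $\log\mathrm{tr}_{\omega_0}\omega_\varphi - B\varphi$ requires a lower bound on $\Delta F$ (equivalently, the $\log$-density must have bounded Laplacian with respect to $\omega_0$), not merely $F\in L^\infty$. In the cscK setting you control $\Delta_\varphi F = \mathrm{tr}_\varphi\mathrm{Ric}-\underline R$, but $\Delta F$ is completely uncontrolled a priori --- this is exactly what makes the problem fourth order and hard, and it is the main reason the naive Yau estimate fails. The paper circumvents this by working \emph{integrally}: it multiplies the inequality for $\Delta_\varphi(n+\Delta\varphi)$ (which still contains the uncontrolled $\Delta F$) by powers of $n+\Delta\varphi$, integrates over $M$, and integrates the $\Delta F$-term by parts (Theorem~\ref{t4.1}), using the gradient estimate $|\nabla\varphi|^2/e^F\le C$ (Theorem~\ref{t2.2}) to close the argument and obtain $W^{2,p}$ bounds for all $p$. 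Then the $L^\infty$ bound on $n+\Delta\varphi$ is obtained by a Moser iteration on the auxiliary quantity $e^{F/2}|\nabla_\varphi F|_\varphi^2 + K(n+\Delta\varphi)$ (Proposition~\ref{p4.2}), which again makes essential use of the second equation $\Delta_\varphi F=\mathrm{tr}_\varphi\mathrm{Ric}-\underline R$. Your ``Step 3'' would only work if $F$ had a priori $C^{1,1}$ or at least $W^{2,\infty}$ regularity, which is precisely what has to be proved.

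In summary: the decoupling into $(\varphi,F)$ and the final bootstrap (your Step 4) are correct and standard, but Steps 1--3 each skip the hard estimate and misattribute the difficulty to a well-known tool that does not apply as stated.
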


As a consequence, we show that
\begin{cor}\label{c1.1}
Let $(M,\omega_{\varphi})$ be as in above theorem, then all higher derivatives of $\varphi$ can be estimated in terms of $||\varphi||_0$.
\end{cor}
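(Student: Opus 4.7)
The plan is to derive Corollary~\ref{c1.1} from Theorem~\ref{t1.1} by establishing an upper bound on the entropy
$$E(\varphi)\defd\int_M \log\!\Big(\frac{\omega_\varphi^n}{\omega_0^n}\Big)\,\omega_\varphi^n$$
in terms of $\|\varphi\|_0$. Writing $F\defd\log(\omega_\varphi^n/\omega_0^n)$, the Ricci identity $\text{Ric}(\omega_\varphi)=\text{Ric}(\omega_0)-\sqrt{-1}\partial\bar\partial F$, traced against $\omega_\varphi$ together with constancy of $R(\omega_\varphi)=\underline R$, gives
$$\Delta_\varphi F \;=\; \text{tr}_{\omega_\varphi}\text{Ric}(\omega_0)-\underline R.$$
Choosing $A$ so that $\text{Ric}(\omega_0)\leq A\omega_0$ and using the identity $\text{tr}_{\omega_\varphi}\omega_0=n-\Delta_\varphi\varphi$, this yields the one-sided inequality
$$\Delta_\varphi(F+A\varphi)\;\leq\; An-\underline R,$$
with right-hand side depending only on the background data $(M,\omega_0,\underline R)$.

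Next I would reduce the entropy bound to an $L^{1+\alpha}(\omega_0^n)$ estimate on the Monge--Amp\`ere density $e^F$. From the elementary inequalities $xe^x\leq \alpha^{-1}e^{(1+\alpha)x}$ for $x\geq 0$ and $|xe^x|\leq 1/e$ for $x\leq 0$, one obtains
$$E(\varphi)\;\leq\;\frac{1}{\alpha}\int_M e^{(1+\alpha)F}\,\omega_0^n + \frac{V}{e},$$
so it is enough to bound $\int_M e^{(1+\alpha)F}\,\omega_0^n$ for some fixed $\alpha>0$ in terms of $\|\varphi\|_0$.

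The core analytic step is to convert the one-sided Laplacian inequality $\Delta_\varphi(F+A\varphi)\leq\text{const}$ into this $L^p$ density bound. The route I would follow is a Moser-type iteration: test the inequality against nondecreasing nonnegative functions of $(F+A\varphi)^+$ (e.g.\ powers $((F+A\varphi)^+)^q$ or exponentials $e^{p(F+A\varphi)}$), integrate by parts against $\omega_\varphi^n$ to exploit the favorable $|\nabla(F+A\varphi)|_\varphi^2$ term, and feed the result into a Sobolev-type inequality. The $L^\infty$ bound on $\varphi$ enters when translating between $\omega_\varphi$- and $\omega_0$-based integrals and when controlling the multiplicative factor $e^{A\varphi}$.

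The main obstacle is the circular dependence inherent in this strategy: the operator $\Delta_\varphi$, its Sobolev constant, and the volume comparison $\omega_\varphi^n/\omega_0^n=e^F$ are all determined by $\omega_\varphi$, which is itself controlled only by the very quantity $F$ we seek to estimate. Breaking this loop forces one to use the $C^0$ bound on $\varphi$ in an essential way -- likely via a pluripotential comparison \`a la Kolodziej, or an Alexandrov--Bakelman--Pucci argument adapted to the complex Monge--Amp\`ere operator -- in order to transfer $\omega_\varphi$-estimates into estimates with respect to the fixed background $\omega_0$. Once the $L^{1+\alpha}$ density bound is achieved, the entropy is controlled by $\|\varphi\|_0$ and Theorem~\ref{t1.1} delivers the higher-derivative estimates asserted by the corollary.
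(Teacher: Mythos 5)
Your proposal takes a genuinely different route from the paper, and the route has a real gap at precisely the step you yourself flag as "the main obstacle." The circular dependence you identify -- that the Sobolev constant and volume form needed for Moser iteration are themselves governed by $\omega_\varphi$, hence by $F$ -- is not resolved by invoking Kolodziej or an Alexandrov--Bakelman--Pucci argument. Those tools run in the \emph{opposite} direction: they produce $\|\varphi\|_0$ bounds from $L^p$ bounds on the density $e^F$ (this is exactly how the paper uses ABP in Section 5, via Lemma~\ref{abp} and Theorem~\ref{t5.2}), not $L^p$ density bounds from $\|\varphi\|_0$. The one-sided inequality $\Delta_\varphi(F+A\varphi)\le C$ is indeed available (it is essentially equation~(\ref{calc}), which the paper uses to get the \emph{lower} bound on $F$ in Proposition~\ref{p2.1}), but extracting an $L^{1+\alpha}$ bound on $e^F$ from it, with constants depending only on $\|\varphi\|_0$, would require uniform control of the Sobolev geometry of $\omega_\varphi$, which is precisely what is missing.

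The paper's actual proof of the entropy bound (Theorem~\ref{Thm5.1}) is variational, not PDE-theoretic, and sidesteps this loop entirely. It uses the theorem of Berman--Berndtsson \cite{Ber14-01} that cscK metrics are global minimizers of the K-energy, so the K-energy of $\varphi$ is a priori bounded above by its value at $\varphi=0$. Then the Chen decomposition formula \cite{chen00},
\[
K(\varphi)=\int_M\log\frac{\omega_{\varphi}^n}{\omega_0^n}\,\frac{\omega_{\varphi}^n}{n!}+J_{-Ric}(\varphi),
\]
isolates the entropy as one term, and the pluripotential-energy term $J_{-Ric}(\varphi)$ is a standard functional bounded in terms of $\|\varphi\|_0$. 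Rearranging immediately yields the entropy bound, after which Theorem~\ref{t1.1} (itself proved through Theorems~\ref{t1.3new}, \ref{t1.4new} and Sections~2--4) delivers all higher-order estimates. If you want to prove Corollary~\ref{c1.1} you should either import the minimization theorem for cscK plus the K-energy decomposition as the paper does, or find a genuinely new way to control the $\omega_\varphi$-Sobolev constant using only $\|\varphi\|_0$; the present write-up does neither.
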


The cscK metric equation can be re-written as a pair of coupled equations 

\begin{align}\label{csck1}
&\log\det(g_{\alpha\bar{\beta}}+
\varphi_{\alpha\bar{\beta}})=F+\log\det(g_{\alpha\bar{\beta}}),\\
\label{csck2}
&\Delta_{\varphi}F=-\underline{R}+tr_{\varphi}Ric_g.
\end{align}

Here $\Delta_{\varphi}$ denotes the Laplace operator defined by the  K\"ahler form  $\omega_{\varphi}: $\[
{\mathcal{H}}  = \{\varphi \in C^\infty(M): \;\; \omega_{\varphi}:=\sqrt{-1}(g_{\alpha\bar{\beta}}+\varphi_{\alpha\bar{\beta}})dz_{\alpha}\wedge d\bar{z}_{\beta}>0\}.
\]

The following proposition might be well known to experts (c.f. \cite{chen15}).
\begin{prop}\label{p1.1} If $\frac{1}{C}\omega_0\leq \omega_{\varphi}\leq C\omega_0$, for some constant $C>0$, then all higher derivatives can be estimated in terms of $C$.
\end{prop}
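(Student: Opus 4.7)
The plan is to bootstrap from the assumed two-sided bound $C^{-1}\omega_0 \le \omega_\varphi \le C\omega_0$ via a chain of standard elliptic regularity results applied to the coupled system (\ref{csck1})--(\ref{csck2}). The point to exploit is that this bound makes both the Monge--Amp\`ere operator and $\Delta_\varphi$ uniformly elliptic with measurable coefficients, so we can treat (\ref{csck2}) as a linear equation for $F$ and (\ref{csck1}) as a concave fully nonlinear equation for $\varphi$.

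First, from (\ref{csck1}) directly, $F=\log\frac{\det(g_{\alpha\bar\beta}+\varphi_{\alpha\bar\beta})}{\det g_{\alpha\bar\beta}}$ is bounded in $L^\infty$ in terms of $C$. Plugging this into (\ref{csck2}), we view $F$ as the solution of a linear uniformly elliptic equation $\Delta_\varphi F = -\underline R + tr_\varphi Ric_g$ whose right-hand side is in $L^\infty$ with a bound depending only on $C$ and the background geometry. The Krylov--Safonov / De~Giorgi--Nash--Moser Harnack-type estimate then yields $F\in C^{\alpha}$ for some $\alpha=\alpha(C)\in(0,1)$, with an a priori bound.

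Second, with $F\in C^{\alpha}$ and $\omega_\varphi$ comparable to $\omega_0$, apply the Evans--Krylov theorem to the complex Monge--Amp\`ere equation (\ref{csck1}). Since $M\mapsto \log\det M$ is concave on positive Hermitian matrices, the Evans--Krylov machinery (in its complex form, as in Siu/Blo\-cki) delivers an interior $C^{2,\alpha'}$ estimate for $\varphi$ depending only on $C$, $\|F\|_{C^\alpha}$, and the background metric.

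Third, bootstrap by Schauder. Now the coefficients $g_\varphi^{\alpha\bar\beta}$ of $\Delta_\varphi$ lie in $C^{\alpha'}$, and the right-hand side of (\ref{csck2}) is in $C^{\alpha'}$, so Schauder gives $F\in C^{2,\alpha'}$. Differentiating (\ref{csck1}) and applying linear elliptic Schauder to the equations satisfied by the derivatives of $\varphi$ yields $\varphi\in C^{4,\alpha'}$, and iterating this alternation produces bounds on $\varphi$ in $C^{k,\alpha'}$ for every $k$, all estimable in terms of $C$. The main obstacle is the second step: one needs the complex Evans--Krylov $C^{2,\alpha}$ estimate available assuming only a two-sided metric bound and H\"older continuity of $F$; once this classical ingredient is granted, everything else is routine elliptic bootstrapping.
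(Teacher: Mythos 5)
Your proposal follows exactly the paper's argument: uniform ellipticity from the two-sided metric bound, De~Giorgi--Nash--Moser (or Krylov--Safonov) on (\ref{csck2}) to get $F\in C^\alpha$, Evans--Krylov on (\ref{csck1}) to get $\varphi\in C^{2,\alpha'}$, then Schauder and differentiation of (\ref{csck1}) to bootstrap to all orders. This matches the paper's outline (which cites \cite{GT} Theorem 8.22 for the first step and \cite{YW} for the complex Evans--Krylov step), so the proof is correct and not a different route.
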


Following \cite{chen15}, Proposition 2.1, we outline some key arguments for this proposition:  since $g_\varphi$ is quasi-isometric, then Equation (1.2) is
uniformly elliptic with a bounded right hand side. Therefore, by De Giorgi-Nash-Moser theory(\cite{GT}, Theorem 8.22), $[F]_{C^{\alpha}(M,g)}$
is uniformly bounded for some $0<\alpha<1$. Substituting this into Equation (1.1),  it becomes a complex Monge-Amp$\grave{\text{e}}$re equation
with $C^\alpha$ bound on the right hand side. Following theory of  Caffarelli, Evans-Krylov(see \cite{YW} for details on extension to complex setting), we know $[\varphi]_{C^{2,\alpha'}(M,g)}$ is uniformly
bounded, for each $\alpha'<\alpha$.
This means (\ref{csck2}) is uniformly elliptic with coefficients in $C^{\alpha'}$. Hence we may apply Schauder theory(\cite{GT}, Theorem 6.2) to conclude an estimate for $||F||_{2,\alpha'}$. 
Now we can go back to (\ref{csck1}). Differentiating the equation, we can conlude $\varphi$ is bounded in $C^{4,\alpha'}$. Hence we may bootstrap this way and get estimates for all higher derivatives. \\

In this short argument, it is obvious that the crucial assumption is that the metric in question is quasi-isometric. The hard challenge is to prove a priori that
the metric $\omega_\varphi$ is quasic isometric. However, there is not much room for improvement at least locally,  following the well known example of Pogorelov on real Monge-Amp$\grave{\text{e}}$re equation. In \cite{He12}, W.Y. He  adapted the construction of Pogorelov's example to complex setting and obtained a complete solution to
\[
\det u_{i\bar j} = 1
\]  
in $\mathbb{C}^n$ which is not $C^2.\;$  Thus, for this conjecture to be true,  the global nature of compact K\"ahler manifold must come into play in a crucial way. \\


Theorem 1.1 can be expanded into a more detailed version.
The constants $C$ in the theorem below can change from line to line. More generally, throughout this paper, the ``C" without subscript may change from line to line, while if there is subscript, then it is some fixed constant.
\begin{thm}\label{t1.2} Suppose $(M, \omega_\varphi)$ is a constant scalar curvature K\"ahler metric. Then the following statements are mutually equivalent: 
\begin{enumerate}
\item There is a constant such that $\int_M \; \log {\omega_\varphi^n\over \omega^n} \cdot \omega_\varphi^n  < C;$
\item There is a constant such that $|\varphi| < C;$
\item  There is a constant $C$ such that  $|\nabla \varphi| < C$ and $\log {\omega_\varphi^n\over \omega^n} \geq -C$; 
\item   There is a constant $C$ such that  $ \frac{1}{C}<{\omega_\varphi^n\over \omega^n} < C;$

\item  There is a constant $C$ such that $n +\Delta \varphi < C$ and $\frac{\omega_{\varphi}^n}{\omega_0^n}>\frac{1}{C}$;
\item All higher derivates of $\varphi$ is uniformly bounded.
\end{enumerate}
\end{thm}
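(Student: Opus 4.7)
The plan is to close the six conditions in a cycle whose only genuinely hard steps are Theorem \ref{t1.1} and its Corollary \ref{c1.1}; the rest consists of elementary pointwise or algebraic manipulations. Concretely, I would establish $(6) \Rightarrow (i)$ for each $i \in \{1,\ldots,5\}$, together with the short chain $(5) \Rightarrow (4) \Rightarrow (1)$ and the auxiliary $(3) \Rightarrow (2)$, and then close the cycle through $(1) \Rightarrow (6)$ and $(2) \Rightarrow (6)$.

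The easy implications go as follows. $(6) \Rightarrow (i)$ for $i=1,\ldots,5$ is immediate from uniform $C^k$ bounds on $\varphi$ together with (\ref{csck1}), which forces two-sided bounds on the Monge--Amp\`ere density once second derivatives are controlled. For $(5) \Rightarrow (4)$, let $\lambda_1, \ldots, \lambda_n$ be the eigenvalues of $\omega_\varphi$ relative to $\omega_0$; then $\sum_i \lambda_i = n+\Delta\varphi$ and $\prod_i \lambda_i = \omega_\varphi^n/\omega_0^n$, and AM--GM gives the desired upper bound on the density from the trace bound, while the lower bound is assumed. For $(4) \Rightarrow (1)$, two-sided control of the density makes $\log(\omega_\varphi^n/\omega_0^n)$ uniformly bounded, and integration against $\omega_\varphi^n$ (with fixed total mass $\int_M \omega_0^n$) yields the entropy bound. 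For $(3) \Rightarrow (2)$, a uniform $|\nabla\varphi|$-bound on compact $(M,\omega_0)$ controls the oscillation of $\varphi$; fixing a normalization such as $\int_M \varphi\, \omega_0^n = 0$ then upgrades this to a $\|\varphi\|_0$ bound.

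The two substantial arrows carry the real content. $(1) \Rightarrow (6)$ is literally Theorem \ref{t1.1}, and $(2) \Rightarrow (6)$ is Corollary \ref{c1.1}. To derive Corollary \ref{c1.1} from Theorem \ref{t1.1}, I would first convert a $C^0$ bound on $\varphi$ into an upper bound on the entropy; the natural strategy is to test the scalar curvature equation (\ref{csck2}) against $F$ (or against $\varphi$), integrate by parts on $(M,\omega_\varphi)$, and absorb the resulting topological term $\int_M F\cdot \mathrm{Ric}_g \wedge \omega_\varphi^{n-1}$ using the cscK structure together with the assumed $\|\varphi\|_0$ bound. Once the entropy is controlled, Theorem \ref{t1.1} produces all higher derivatives, closing the cycle via $(3) \Rightarrow (2) \Rightarrow (6)$ and $(5) \Rightarrow (4) \Rightarrow (1) \Rightarrow (6)$.

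The decisive obstacle, which in fact drives the entire paper, is hidden inside Theorem \ref{t1.1}: one must extract a pointwise bound on $\varphi$, and thereby on $n+\Delta\varphi$ and on the Monge--Amp\`ere density, from the single integral quantity $\int_M \log(\omega_\varphi^n/\omega_0^n)\,\omega_\varphi^n$, in a fourth-order setting with no direct maximum principle. Once that estimate is available, the quasi-isometry bootstrap of Proposition \ref{p1.1} takes over and the equivalence of (1)--(6) follows cleanly.
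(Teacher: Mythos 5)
Your overall cycle is logically sound and is essentially a reorganization of the paper's own flow. The paper proves
\[
(1) \Longrightarrow (2)+(4),\quad (3)\Longleftrightarrow(4),\quad (4)\Longrightarrow(5),\quad (5)\Longrightarrow(6),
\]
with $(6)\Rightarrow$ everything being trivial; you instead close the cycle through $(5)\Rightarrow(4)\Rightarrow(1)\Rightarrow(6)$ and $(3)\Rightarrow(2)\Rightarrow(6)$, letting Theorem~\ref{t1.1} and Corollary~\ref{c1.1} absorb the hard analytic work. Both organizations are correct, and your elementary steps ($(5)\Rightarrow(4)$ by AM--GM, $(4)\Rightarrow(1)$ by boundedness of $F$, $(3)\Rightarrow(2)$ by oscillation control plus a normalization) are all fine. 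One worthwhile observation is that you route around the genuinely hard implication $(4)\Rightarrow(5)$ (the $W^{2,p}$ and $C^{1,1}$ estimates of Sections~3--4), replacing it with the easy $(5)\Rightarrow(4)$ and letting the hard content sit inside the black-box $(1)\Rightarrow(6)$; that is legitimate bookkeeping.

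However, the sketch you give of why a $\|\varphi\|_0$ bound yields an entropy bound (i.e.\ why Corollary~\ref{c1.1} follows from Theorem~\ref{t1.1}) contains a genuine gap. You propose to test \eqref{csck2} against $F$ or $\varphi$, integrate by parts, and ``absorb the resulting topological term $\int_M F\cdot \mathrm{Ric}_g\wedge\omega_\varphi^{n-1}$.'' But that term is \emph{not} topological: unlike $\int_M \mathrm{Ric}_g\wedge\omega_\varphi^{n-1}$, the extra weight $F$ prevents it from being a cohomological pairing, and a $\|\varphi\|_0$ bound gives you no handle on $\int_M F\,\mathrm{tr}_\varphi \mathrm{Ric}\,\omega_\varphi^n$. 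Pairing \eqref{csck2} with $F$ yields $\int_M |\nabla_\varphi F|^2\,\omega_\varphi^n = \underline{R}\int_M F\,\omega_\varphi^n - \int_M F\,\mathrm{tr}_\varphi\mathrm{Ric}\,\omega_\varphi^n$, which gives at best a one-sided inequality of the wrong type, and pairing with $\varphi$ produces $\int_M F\,\mathrm{tr}_\varphi g\,\omega_\varphi^n$, which is likewise uncontrolled. The paper's actual argument (Theorem~\ref{Thm5.1}) is not an integration-by-parts computation at all: it invokes the theorem of Berman--Berndtsson that a cscK metric is a global minimizer of the K-energy, combined with Chen's decomposition $K(\varphi) = \int_M \log\frac{\omega_\varphi^n}{\omega_0^n}\frac{\omega_\varphi^n}{n!} + J_{-\mathrm{Ric}}(\varphi)$ and the elementary fact that $J_{-\mathrm{Ric}}$ is bounded in terms of $\|\varphi\|_0$. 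The external input from \cite{Ber14-01} is essential here and cannot be replaced by the local PDE manipulation you describe.
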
 
Some remarks are in order:
\begin{enumerate}
\item The strength of statement is roughly in increasing order. The equivalence of (1) and (6) gives Theorem 1.1.
\item From (5) to (6), this is exactly Proposition 1.1, since this assumption implies $\frac{1}{C'}\omega_0\leq\omega_{\varphi}\leq C'\omega_0$. All other estimates are new.
\item Here is the flow line of our proof: \[
(1) \stackrel{section\, 5}{\Longrightarrow} (2) +(4)\;\; {\rm and} \;\;(3)\stackrel{section\,2}{\Longleftrightarrow}(4)\stackrel{section\,3}{\Longrightarrow}(5)\stackrel{section\,4}{\Longrightarrow}(6).\]
\end{enumerate}

\begin{rem}
In Theorem \ref{t1.1} and the first part of Theorem \ref{t1.2}, it is sufficient to assume that $\varphi$ remains bounded under $L^1$ geodesic distance, due to the fact that cscK metrics are minimizers of $K$-energy.  
We will discuss this matter in more detail in our next paper of the series.
\end{rem}
Now we present technical theorems which lead to this main theorem. Indeed, these
technical theorems are interesting in its own right and may be used in other applications. 
\begin{thm}\label{t1.3new}
(Corollary \ref{c5.3}) Let $\varphi$ be a smooth solution to (\ref{csck1}), (\ref{csck2}), then for any $1<p<\infty$, there exists a constant $C$, depending only on the background K\"ahler metric $(M,g)$, an upper bound of $\int_Me^FFdvol_g$, and $p$, such that
\begin{equation}
||e^F||_{L^p(dvol_g)}\leq C,\,\,||\varphi||_0\leq C.
\end{equation}
\end{thm}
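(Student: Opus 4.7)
The plan is to establish the $L^p$ bound on $e^F$ first and then derive the $C^0$ bound on $\varphi$ as a consequence. The latter is a standard application of Kolodziej's $L^\infty$ estimate for the complex Monge--Amp\`ere equation: once $\|e^F\|_{L^p(dvol_g)}\leq C$ is known for some $p>1$, the equation $\omega_\varphi^n = e^F \omega_0^n$ combined with a normalization such as $\sup_M \varphi = 0$ yields $\|\varphi\|_0 \leq C'$ directly from pluripotential theory.

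For the $L^p$ bound on $e^F$, I would exploit the second cscK equation $\Delta_\varphi F = tr_\varphi Ric_g - \underline{R}$. A convenient structural device is the shifted function $u:=F-\lambda\varphi$, where $\lambda>0$ is chosen large enough that $Ric_g+\lambda\omega_0\geq 0$ as Hermitian forms on $M$. Using $\Delta_\varphi\varphi = n - tr_\varphi\omega_0$, one computes
\begin{equation*}
\Delta_\varphi u \;=\; tr_\varphi(Ric_g+\lambda\omega_0) - \lambda n - \underline{R} \;\geq\; -C_0,
\end{equation*}
so $u$ is $\omega_\varphi$-subharmonic modulo a constant and the sign-indefinite $tr_\varphi Ric_g$ contribution has been absorbed.

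The core estimate would then come from testing $\Delta_\varphi u \geq -C_0$ against powers of $e^F$ (or of $u$), integrating against $\omega_\varphi^n = e^F \omega_0^n$, and performing integration by parts. The resulting inequality features $\omega_\varphi$-gradients, which must be related to $\omega_0$-gradients in order to invoke the fixed Sobolev inequality on the background $(M,\omega_0)$. Two ingredients drive the transfer: the cohomological identity $\int_M tr_\varphi\omega_0\cdot\omega_\varphi^n = n\int_M\omega_0\wedge\omega_\varphi^{n-1}$, which controls $tr_\varphi\omega_0$ in integral form, and the $e^F$-weight coming from $\omega_\varphi^n = e^F\omega_0^n$, which can be absorbed via H\"older once an inductive $L^q$ bound on $e^F$ is in hand. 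A De Giorgi--Moser iteration then upgrades $L^q$ integrability of $e^F$ to $L^{\chi q}$ for some fixed $\chi>1$, eventually producing $\|e^F\|_{L^p}\leq C$ for arbitrary $p<\infty$.

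The principal obstacle is igniting the iteration. The entropy hypothesis $\int_M e^F F\,dvol_g\leq C$ provides only $L^1\log L$ integrability of $e^F$, strictly weaker than the $L^{1+\epsilon}$ starting point the Moser scheme needs. A separate preliminary argument must bridge this gap; the natural route is to test the cscK equation against a truncation like $(F-t)_+$ for large $t$, combine the outcome with the elementary inequality $st \leq s\log s + e^{t-1}$, and use $\omega_\varphi^n = e^F \omega_0^n$ to extract a small-exponent improvement $\|e^F\|_{L^{1+\epsilon}}\leq C$. A secondary delicate point is that the gradient comparison between the unknown metric $\omega_\varphi$ and the fixed $\omega_0$ is not pointwise (as would be the case under a quasi-isometry assumption) but only integral, so the cohomological identities above must be deployed with care at each step; this is where I expect most of the technical work to reside.
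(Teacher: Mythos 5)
The second half of your proposal — deducing $\|\varphi\|_0\leq C$ from $\|e^F\|_{L^p}\leq C$ for $p>1$ via Kolodziej's estimate (or Blocki's elementary replacement via Alexandrov's maximum principle) — matches the paper. The real work, however, is in the first half, and there your route departs substantially from the paper and, as sketched, has a gap that I do not see how to close.

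Your plan is to run a De Giorgi--Moser iteration off the differential inequality $\Delta_\varphi u\geq -C_0$ for $u=F-\lambda\varphi$, after first bootstrapping the entropy bound $\int_M e^F F\,dvol_g\leq C$ up to some $L^{1+\epsilon}$ starting point. Two obstacles stand in the way. First, any Moser scheme needs a Sobolev inequality, which lives on the fixed background $(M,g)$, whereas the natural gradients coming from $\Delta_\varphi$ are $\omega_\varphi$-gradients. The transfer $|\nabla v|^2_g\lesssim tr_\varphi g\,|\nabla_\varphi v|^2_\varphi$ (or its integral cousins) requires quantitative control on $tr_\varphi g$ or $n+\Delta\varphi$, and at this stage of the argument you have none — that is exactly what Sections 3--4 of the paper produce \emph{after} the $C^0$ bound is in hand, not before. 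The cohomological identity $\int tr_\varphi\omega_0\cdot\omega_\varphi^n=n\int\omega_0\wedge\omega_\varphi^{n-1}$ gives only an $L^1(\omega_\varphi^n)$-bound and is too weak to run the iteration. Second, and more fundamentally, the function $u=F-\lambda\varphi$ contains $\varphi$ itself, whose $C^0$ norm is precisely the unknown; testing $\Delta_\varphi u\geq -C_0$ against truncations $(u-t)_+$ produces a right-hand side $\int(u-t)_+e^F\,dvol_g$ that you cannot control without already knowing something about $\|\varphi\|_0$. The Young inequality $st\leq s\log s + e^{t-1}$ does not by itself upgrade $L\log L$ integrability to $L^{1+\epsilon}$; that implication is false for general functions and has to come from the PDE, which is where the argument loops back on itself.

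The paper cuts through both difficulties with ideas your proposal does not contain. To control the potentially unbounded $\varphi$ it invokes Tian's $\alpha$-invariant, $\int_M e^{-\alpha\phi}\,dvol_g\leq C_5$ for all $\phi$ in $P(M,g)$, applied to $\varphi$ normalized by $\sup_M\varphi=0$. To inject the entropy information into a maximum-principle argument it constructs an \emph{auxiliary} potential $\psi$ solving the Calabi--Yau equation $\omega_\psi^n=\big(e^F\Phi(F)/\!\int e^F\Phi(F)\big)\,\omega_0^n$ with $\sup_M\psi=0$, so that $\Delta_\varphi\psi\geq n A_\Phi^{-1/n}\Phi^{1/n}(F)-tr_\varphi g$ via AM--GM. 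The test function is then $F+\eps\psi-\lambda\varphi$, and the key estimate (Theorem~\ref{t5.2}) is obtained not by Moser iteration but by the Alexandrov--Bakelman--Pucci maximum principle applied to $\Delta_\varphi$ with a cutoff; the ABP estimate is exactly the right tool here because its normalizing factor $D^*=(\det g_\varphi)^{-1/n}=e^{-F/n}(\det g)^{-1/n}$ is explicit, and the integrand on the right-hand side is supported on $\{F\leq C\}$ by coercivity of $\Phi$, so the $e^{-\alpha\psi}$ and $e^{-\alpha\varphi}$ bounds suffice to close the argument. You would need to discover the auxiliary $\psi$, the $\alpha$-invariant input, and the ABP mechanism to complete your program; absent those, the iteration never starts.
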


\begin{thm}\label{t1.4new}
(Corollary \ref{c5.4})Let $\varphi$ be a smooth solution to (\ref{csck1}), (\ref{csck2}), then there exists a constant $C$, 
depending only on the background metric $(M,g)$ and an upper bound for $\int_Me^FFdvol_g$, 
such that
\begin{equation}
e^F\leq C.
\end{equation}
\end{thm}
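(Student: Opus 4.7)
The plan is to promote the $L^p$ bounds on $e^F$ from Theorem~\ref{t1.3new} to an $L^\infty$ bound via a Moser-type iteration driven by the second cscK equation $\Delta_{\varphi}F = -\underline{R} + tr_{\varphi}Ric_g$. The base case is furnished by Theorem~\ref{t1.3new}: $\|e^F\|_{L^p(\omega_0^n)}\le C_p$ for every $p<\infty$, together with the uniform $C^0$ bound $\|\varphi\|_0\le C$. The goal is to run this iteration all the way to $L^\infty$ while carefully tracking how the constants depend on the exponent as it tends to infinity.

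First, I would multiply the second cscK equation by $e^{pF}$ and integrate against $\omega_\varphi^n$. Since $\omega_\varphi^n$ is the natural volume form for $\Delta_{\varphi}$, integration by parts yields
\[
\frac{4}{p}\int_M|\nabla e^{pF/2}|_{\varphi}^2\,\omega_\varphi^n \;=\; \int_M\bigl(\underline{R} - tr_{\varphi}Ric_g\bigr)\,e^{(p+1)F}\,\omega_0^n.
\]
Second, I would control the curvature term on the right. Since $Ric_g$ is a fixed bounded tensor, the essential quantity to estimate is $\int tr_{\varphi}g\cdot e^{(p+1)F}\,\omega_0^n$. Using $tr_{\varphi}g\cdot\omega_\varphi^n = n\,\omega_0\wedge\omega_\varphi^{n-1}$ together with the expansion $\omega_0\wedge\omega_\varphi^{n-1} = \omega_\varphi^n - i\partial\bar{\partial}\varphi\wedge\omega_\varphi^{n-1}$, an integration by parts against the closed form $\omega_\varphi^{n-1}$ trades the unbounded $\omega_\varphi^{n-1}$-factor for a gradient cross-term of the type $\int e^{pF}\langle\nabla F,\nabla\varphi\rangle_{\varphi}\,\omega_\varphi^n$, which can be absorbed into the left-hand side via Cauchy--Schwarz and the $C^0$ bound on $\varphi$. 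Third, I would convert the resulting $\varphi$-metric gradient energy into a background-metric estimate by combining the Monge--Amp\`ere identity $\omega_\varphi^n = e^F\omega_0^n$ with the Sobolev inequality on $(M,g)$ (or with a weighted Sobolev variant, available thanks to $\|\varphi\|_0\le C$). This produces a Moser step of the form
\[
\|e^F\|_{L^{(p+1)\chi}(\omega_0^n)} \;\le\; C(p)^{1/(p+1)}\,\|e^F\|_{L^{p+1}(\omega_0^n)}^{1+O(1/p)}
\]
with Sobolev exponent $\chi>1$ and polynomially growing constant $C(p)$. Finally, iterating this from the base case in Theorem~\ref{t1.3new} and verifying that the product of the constants converges as $p\to\infty$ would give $\|e^F\|_\infty \le C$.

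The hard part will be the second step. The curvature term implicitly carries an unbounded $tr_{\varphi}g$-factor (equivalently, an $\omega_\varphi^{n-1}$-factor) that intertwines $F$ with $\nabla\varphi$, and the integration-by-parts scheme must be arranged so that every resulting cross-term can be absorbed into the gradient energy on the left without destroying the iteration's closure; this absorption depends critically on having $\|\varphi\|_0 \le C$ already in hand. A secondary difficulty is the comparison of gradient energies between the $g$- and $g_\varphi$-metrics in the third step: these are not pointwise comparable in general, so one must resort to integrated H\"older splitting using the Monge--Amp\`ere identity. Verifying that the Moser constants remain of a size compatible with iteration as $p\to\infty$ is the delicate bookkeeping issue the proof must ultimately navigate.
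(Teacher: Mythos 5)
The paper's proof of this statement is not an iteration argument at all: it applies Theorem \ref{t5.2} with a \emph{fixed} value of $\eps$ (say $\eps=\tfrac12$) and $\Phi(t)=\sqrt{1+t^2}$, which gives a pointwise inequality $F+\eps\psi-2(1+\max_M|Ric|)\varphi\leq C_{5.1}$; since Corollary \ref{c5.3} supplies $C^0$ bounds on both $\varphi$ and the auxiliary potential $\psi$, the upper bound on $F$ drops out immediately. The key tool is the Alexandrov--Bakelman--Pucci estimate, whose relevant coefficient $D^*$ involves only $\det(g_\varphi)^{-1/n}=e^{-F/n}(\det g)^{-1/n}$, i.e.\ a quantity directly controlled by the $L^p$ bounds on $e^F$. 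Your proposal replaces this with a Moser iteration and never invokes $\psi$ or the ABP estimate, so it is a genuinely different route --- but it has a gap that I do not see how to repair.

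The problem is the cross term in your second step. After integrating the equation against $e^{pF}\,\omega_\varphi^n$ and trading the $tr_\varphi g$ factor for a boundary term using $\omega_0=\omega_\varphi-i\partial\bar\partial\varphi$, you arrive at an identity of the form
\[
S_p \;:=\; \int_M e^{pF}\,tr_\varphi g\,\omega_\varphi^n
\;=\; n\int_M e^{(p+1)F}\omega_0^n \;+\; p\int_M e^{pF}\,Re\langle\nabla F,\nabla\varphi\rangle_\varphi\,\omega_\varphi^n .
\]
Writing $E_p = p\int e^{pF}|\nabla F|_\varphi^2\,\omega_\varphi^n$ and $J_p=\int e^{pF}|\nabla\varphi|_\varphi^2\,\omega_\varphi^n$, the equation gives $E_p\leq C(A_p+S_p)$ with $A_p=\int e^{(p+1)F}\omega_0^n$, and Cauchy--Schwarz on the cross term gives $S_p\leq nA_p+\sqrt{p}\sqrt{E_p}\sqrt{J_p}$. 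The catch is that $J_p$ is itself out of reach: $|\nabla\varphi|_\varphi^2$ is not bounded (the $C^0$ bound on $\varphi$ controls $|\nabla\varphi|$ relative to $\omega_0$ only through Theorem \ref{t2.2}, which gives $|\nabla\varphi|^2\leq Ce^F$ but still leaves $|\nabla\varphi|_\varphi^2\leq Ce^F\,tr_\varphi g$), and integrating $J_p$ by parts via $i\partial\varphi\wedge\bar\partial\varphi=\tfrac12 i\partial\bar\partial(\varphi^2)-\varphi\,i\partial\bar\partial\varphi$ reintroduces both $S_p$ and a new cross term with coefficient $p\|\varphi\|_0$. Chasing through the three inequalities, one finds the absorption requires $p\|\varphi\|_0\lesssim 1$, which fails for large $p$: the system $E_p\leq C(A_p+S_p)$, $S_p\leq nA_p+\sqrt{p E_p J_p}$, $J_p\leq 2p\|\varphi\|_0^2 E_p+2\|\varphi\|_0(S_p+nA_p)$ produces a coefficient $\gtrsim p\|\varphi\|_0$ in front of $S_p$ on the right, so the estimate does not close. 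This is structural: the source $tr_\varphi Ric$ involves the \emph{trace} of $g_\varphi^{-1}$ against a bounded tensor, and no amount of integration by parts can turn a trace bound into a determinant bound.

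There are two subsidiary problems you would also need to address. First, your third step requires a Sobolev inequality; the inequality you actually have is for $(M,g)$, and passing from $\int|\nabla v|_\varphi^2\,\omega_\varphi^n$ to $\int|\nabla v|_g^2\,\omega_0^n$ costs a factor of $n+\Delta\varphi$ which is not a priori bounded, while a uniform Sobolev inequality for $(M,\omega_\varphi)$ would need geometric control on $\omega_\varphi$ far beyond $\|\varphi\|_0\leq C$. Second, the base-case constants in Theorem \ref{t1.3new} degrade very rapidly in $p$: tracing through the proof of Corollary \ref{c5.3} (which takes $\eps=\alpha/q$ in Theorem \ref{t5.2}), one finds that the threshold $C_{5.3}$ where the ABP source becomes inactive grows like $\eps^{-n}\sim q^n$, so $\int e^{qF}\lesssim e^{Cq^{n+1}}$ rather than polynomially in $q$. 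Even a closing Caccioppoli estimate would not rescue the iteration from constants of this size. These difficulties are precisely what the auxiliary potential $\psi$ and the ABP estimate are designed to sidestep, and I do not see how to prove the statement without some such device.
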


\begin{prop}(Proposition \ref{p2.1})\label{p1.3new}
Let $\varphi$ be a smooth solution to (\ref{csck1}), (\ref{csck2}), then there exists a constant $C$, depending only on $||\varphi||_0$, such that
\begin{equation}
F\geq-C.
\end{equation}
\end{prop}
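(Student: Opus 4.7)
The plan is to run a maximum principle argument for an auxiliary function of the form $F + A\varphi$, where $A>0$ is a large constant depending only on the background geometry. Combining the cscK equation (\ref{csck2}) with the elementary identity $\Delta_\varphi \varphi = n - tr_\varphi \omega_0$, one computes
\begin{equation*}
\Delta_\varphi(F + A\varphi) \;=\; -\underline{R} + tr_\varphi Ric_g + An - A\, tr_\varphi \omega_0.
\end{equation*}
Since $Ric_g$ is a fixed smooth tensor on the compact manifold $(M,\omega_0)$, there exists a constant $B$ with $Ric_g \leq B\,\omega_0$ as Hermitian forms, hence $tr_\varphi Ric_g \leq B\, tr_\varphi \omega_0$. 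Choosing $A>B$ makes the net coefficient of $tr_\varphi \omega_0$ on the right-hand side strictly negative.

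Let $x_0\in M$ be a minimum of $F+A\varphi$. Then $\Delta_\varphi(F+A\varphi)(x_0) \geq 0$, which after rearranging yields a uniform pointwise bound $tr_\varphi \omega_0(x_0) \leq C_1$, where $C_1$ depends only on $n$, $A$, $B$, and $\underline{R}$. Using equation (\ref{csck1}) one has $\prod_i \lambda_i = e^{F}$ for the eigenvalues $\lambda_i$ of $\omega_\varphi$ relative to $\omega_0$, so AM-GM applied to $tr_\varphi \omega_0 = \sum_i 1/\lambda_i$ gives $tr_\varphi \omega_0 \geq n\, e^{-F/n}$ pointwise. Combining this with the previous bound at $x_0$ produces a pointwise lower estimate $F(x_0) \geq -n\log(C_1/n)$.

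The last step is to propagate this value from $x_0$ to arbitrary $x\in M$ using the minimality of $x_0$ and the $C^0$ control of $\varphi$: for every $x$,
\begin{equation*}
F(x) \;\geq\; F(x_0) + A\bigl(\varphi(x_0) - \varphi(x)\bigr) \;\geq\; -n\log(C_1/n) - 2A\,\|\varphi\|_0.
\end{equation*}
This yields the desired inequality. The only nontrivial aspect is the initial choice of auxiliary function $F+A\varphi$ and the sign of $A$ so that the $tr_\varphi\omega_0$ term drops out with a favorable sign at the minimum; once this is arranged, the remaining manipulations are routine. This is consistent with the general expectation that the lower bound on $F$ is the soft direction of the problem, while the matching upper bound on $F$ (equivalently, on $\omega_\varphi^n/\omega_0^n$) is the genuinely difficult estimate, established separately in Theorems \ref{t1.3new} and \ref{t1.4new}.
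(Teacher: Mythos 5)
Your proof is correct and follows essentially the same route as the paper: both use the test function $F + A\varphi$ with $A$ chosen large enough to dominate the Ricci term, apply the maximum principle at a minimum, invoke the arithmetic–geometric mean inequality $tr_\varphi\omega_0 \geq n\,e^{-F/n}$, and then propagate the bound using $\|\varphi\|_0$. The only cosmetic difference is that the paper combines the bound on $tr_\varphi\omega_0$ and the AM-GM step into a single inequality rather than extracting an intermediate constant $C_1$.
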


\begin{thm}\label{t1.5new}(Theorem \ref{t2.2}) Let $\varphi$ be a smooth solution to (\ref{csck1}), (\ref{csck2}), then there exists a constant $C$, depending only on $||\varphi||_0$ and the background metric $g$, such that
\begin{equation}
\frac{|\nabla\varphi|^2}{e^F}\leq C.
\end{equation}
\end{thm}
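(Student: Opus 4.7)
The plan is to apply the maximum principle with respect to $\Delta_\varphi$ to the auxiliary function
\[
H \;=\; \log\bigl(|\nabla\varphi|^2_g\bigr) - F - A\varphi,
\]
where $A>0$ is a large constant to be fixed in terms of an upper bound for $|\mathrm{Ric}_g|$. Since $\|\varphi\|_0$ is bounded by hypothesis, controlling $\sup_M H$ is equivalent to the stated estimate on $|\nabla\varphi|^2/e^F$. I assume $H$ attains its maximum at an interior point $p\in M$, choose holomorphic normal coordinates at $p$ that diagonalize $\varphi_{i\bar j}$, and write $\beta:=|\nabla\varphi|^2_g$.

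The heart of the argument is a two-sided estimate for $\Delta_\varphi\beta$ at $p$. For the lower bound, differentiate $\beta=g^{m\bar n}\varphi_m\varphi_{\bar n}$ twice; commuting covariant derivatives via the Ricci identity introduces a curvature contribution bounded by $C\beta\,\mathrm{tr}_\varphi g$. Differentiating the Monge--Amp\`ere equation~(\ref{csck1}) supplies the identity $F_m=\sum_k\varphi_{k\bar k m}/(1+\varphi_{k\bar k})$ at $p$, which converts the mixed third-derivative contributions into $2\,\mathrm{Re}\langle\nabla F,\nabla\varphi\rangle_g$ and leaves a nonnegative Bochner-type term
\[
P \;:=\; \sum_{m,k}\frac{|\varphi_{mk}|^2+|\varphi_{m\bar k}|^2}{1+\varphi_{k\bar k}},
\]
giving $\Delta_\varphi\beta\geq 2\,\mathrm{Re}\langle\nabla F,\nabla\varphi\rangle_g+P-C\beta\,\mathrm{tr}_\varphi g$. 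For the upper bound, $\Delta_\varphi H(p)\leq 0$ together with (\ref{csck2}), which reads $\Delta_\varphi F=-\underline R+\mathrm{tr}_\varphi\mathrm{Ric}_g$, and the identity $\Delta_\varphi\varphi=n-\mathrm{tr}_\varphi g$, produces
\[
\Delta_\varphi\beta \;\leq\; \frac{|\nabla\beta|^2_\varphi}{\beta} + \beta(C+An) - \beta(A-C)\,\mathrm{tr}_\varphi g.
\]
Taking $A$ larger than a suitable multiple of $|\mathrm{Ric}_g|_0$ makes the $\mathrm{tr}_\varphi g$ coefficient strictly negative, absorbing the matching positive one from the lower bound.

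The main obstacle is the term $|\nabla\beta|^2_\varphi/\beta=\beta|\nabla F+A\nabla\varphi|^2_\varphi$, which involves $|\nabla F|^2_\varphi$ and is not directly controlled by the cscK system (which furnishes only $\Delta_\varphi F$). The key trick is to use the explicit expression
\[
\partial_k\beta \;=\; \sum_m\varphi_{mk}\varphi_{\bar m}+\varphi_k\varphi_{k\bar k}
\]
valid at $p$: a termwise Cauchy--Schwarz yields $|\nabla\beta|^2_\varphi\leq C\beta P$ for a universal constant, so the dangerous $|\nabla\beta|^2_\varphi/\beta$ may be absorbed by the Bochner term $P$ coming out of the lower bound. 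The remaining contributions are the inner product $2\,\mathrm{Re}\langle\nabla F,\nabla\varphi\rangle_g$, which via the critical-point relation $\nabla F=\nabla\log\beta-A\nabla\varphi$ can be rewritten in terms of second derivatives of $\varphi$, and a term of the form $A^2\beta|\partial\varphi|^2_\varphi$ that is controlled by $A^2\beta^2\,\mathrm{tr}_\varphi g$ (using the pointwise inequality $|\partial\varphi|^2_\varphi\leq\beta\,\mathrm{tr}_\varphi g$) and absorbed into the already-negative $\mathrm{tr}_\varphi g$ coefficient after enlarging $A$ once more if necessary. This produces a pointwise bound $H(p)\leq C$ depending only on $A$, $\|\varphi\|_0$, and background data; since $p$ is the maximum, $\sup_M H\leq C$, whence the stated estimate $|\nabla\varphi|^2/e^F\leq C$ follows upon using $|\varphi|\leq C$. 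The most delicate point in carrying this out rigorously is tracking the constants so that the absorption of $|\nabla\beta|^2_\varphi/\beta$ by $P$ actually closes.
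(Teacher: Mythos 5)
Your overall strategy (maximum principle with respect to $\Delta_\varphi$ applied to a test function of the form $\log|\nabla\varphi|^2 - F - A\varphi$) is in the same spirit as the paper's, which applies the maximum principle to $w=e^{-(F+\lambda\varphi)+\frac12\varphi^2}(|\nabla\varphi|^2+K)$. At a maximum of $H$ and of $\log w$ the first-order condition and the sign of $\Delta_\varphi$ are the same, so at that level these are interchangeable; the difference is that the paper's test function carries two extra pieces of structure, the additive constant $K$ and the quadratic term $\tfrac12\varphi^2$ in the exponent, and both of these are essential to make the argument close. Your proposal drops both, and the step where you claim the argument closes contains a genuine gap.

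The central problem is the claim that ``a termwise Cauchy--Schwarz yields $|\nabla\beta|^2_\varphi\leq C\beta P$ for a universal constant, so the dangerous $|\nabla\beta|^2_\varphi/\beta$ may be absorbed by the Bochner term $P$.'' With $\partial_k\beta=\varphi_k\varphi_{k\bar k}+\sum_\alpha\varphi_{\bar\alpha}\varphi_{\alpha k}$ and $P=\sum_k\frac{\varphi_{k\bar k}^2}{1+\varphi_{k\bar k}}+\sum_{k,\alpha}\frac{|\varphi_{\alpha k}|^2}{1+\varphi_{k\bar k}}$, the optimal constant in $|\nabla\beta|^2_\varphi\leq C\beta P$ is $C=2$ (and this is attained when $\varphi_k\varphi_{k\bar k}$ and $\sum_\alpha\varphi_{\bar\alpha}\varphi_{\alpha k}$ are aligned and $\varphi_{\alpha k}\propto\varphi_\alpha$). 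Since $C>1$, the difference $P-\frac{|\nabla\beta|^2_\varphi}{\beta}\geq -P$ is \emph{not} bounded below by controllable quantities, so the absorption does not go through. One can try to salvage this by using the cross term $2\,\mathrm{Re}\langle\nabla F,\nabla\varphi\rangle_g$ together with the critical-point identity to effect a cancellation --- this is exactly the paper's ``crucial cancellation'' (2.22) and the complete square (2.20) --- but after that cancellation there remains the term $2\,\mathrm{Re}\sum_i\frac{(F_i+\lambda\varphi_i-\cdots)\varphi_{\bar i}}{1+\varphi_{i\bar i}}$, and the paper absorbs the $\frac{1}{2}\frac{|F_i+\lambda\varphi_i-\cdots|^2}{1+\varphi_{i\bar i}}$ piece of its Cauchy--Schwarz estimate precisely by the extra good term $K\frac{|F_i+\lambda\varphi_i-\cdots|^2}{1+\varphi_{i\bar i}}$ generated by the $+K$ in $(|\nabla\varphi|^2+K)$. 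Without the $K$, there is no term to absorb it. Relatedly, your fallback absorption ``$A^2\beta^2\,\mathrm{tr}_\varphi g$ into the already-negative $\mathrm{tr}_\varphi g$ coefficient after enlarging $A$'' is circular: the negative coefficient is linear in $\beta$ while the term to be absorbed is quadratic, and enlarging $A$ scales the bad term by $A^2$ but the good one only by $A$.

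There is a second, equally serious, gap at the finish line, even granting the absorptions. After all cancellations the surviving good term in your scheme is of the form $c\,\beta\,\mathrm{tr}_\varphi g$, and using $\mathrm{tr}_\varphi g\geq n e^{-F/n}$ this only yields $\beta e^{-F/n}\leq C\beta$ at the maximum, i.e.\ a lower bound on $F$ (which is Proposition \ref{p2.1}, already known), not the desired bound on $\beta e^{-F}$. The paper's $\frac{1}{2}\varphi^2$ in the exponent is what produces the additional positive term $\frac{|\varphi_i|^2\,|\nabla\varphi|^2}{1+\varphi_{i\bar i}}$ (see (\ref{2.17}) and (\ref{2.23new})), and it is the Young-inequality combination of this term with $|\nabla\varphi|^2\sum_i\frac{1}{1+\varphi_{i\bar i}}$ that gives the superlinear quantity $(e^{-F}|\nabla\varphi|^2)^{1+1/n}$ in (\ref{2.37}); this superlinear power is what actually forces a bound on $e^{-F}|\nabla\varphi|^2$. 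Your proposal never generates a superlinear term, so the final inequality cannot close. In short: the test function $\log|\nabla\varphi|^2-F-A\varphi$ is too lean --- it is missing both the $+K$ shift and the $\tfrac12\varphi^2$ convexification, and both are load-bearing in the paper's argument.
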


We also show that one can estimate the upper bound of $F$ directly in terms of gradient bound of $\varphi$.
This result is not directly needed for our main result, but can be of independent interest.
\begin{thm}(Theorem \ref{t2.1})\label{t1.6new}
Let $\varphi$ be a smooth solution to (\ref{csck1}) and (\ref{csck2}), then there exists a constant $C$, depending only on $||\varphi||_0$,  and the backgroud metric $g$, such that
\begin{equation}
\max_Me^{\frac{F}{n}}\leq C\max_M|\nabla\varphi|^2.
\end{equation}
\end{thm}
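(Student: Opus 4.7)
The plan is to run a maximum-principle argument on an auxiliary function of Yau--Aubin type, modified to extract dependence on $|\nabla\varphi|^2$, after an initial AM--GM reduction. First I would exploit a pointwise inequality that follows immediately from (\ref{csck1}): at any point, select complex coordinates normal for $g$ that diagonalize $(\varphi_{\alpha\bar\beta})$, and write $h_\alpha := 1+\varphi_{\alpha\bar\alpha}$ for the eigenvalues of $g^{-1}g_\varphi$. Equation (\ref{csck1}) becomes $\prod_\alpha h_\alpha = e^F$, so AM--GM gives
\[
e^{F/n} \;=\; \Bigl(\prod_\alpha h_\alpha\Bigr)^{1/n} \;\le\; \frac{1}{n}\sum_\alpha h_\alpha \;=\; \frac{n+\Delta\varphi}{n}.
\]
Hence it suffices to bound $n+\Delta\varphi$ pointwise by a constant times $|\nabla\varphi|^2$ at the maximum of a suitable quantity.

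To that end I would consider
\[
H \;:=\; \log(n+\Delta\varphi)\;-\;n\log\bigl(|\nabla\varphi|^2+K\bigr)\;-\;\lambda\,(\varphi-\inf_M\varphi),
\]
with a small regularizing constant $K>0$ and a large $\lambda>0$ to be chosen in terms of $\|\varphi\|_0$ and the bisectional curvature of $g$. At an interior maximum $p_0$ of $H$ the conditions $dH(p_0)=0$ and $\Delta_\varphi H(p_0)\le 0$ are combined with: the Yau-type bound $\Delta_\varphi\log(n+\Delta\varphi)\ge \Delta_g F/(n+\Delta\varphi)-C\,tr_\varphi g$ modulo a positive third-order square, where $C$ comes from the bisectional curvature of $g$; the Bochner-type expansion of $\Delta_\varphi\log(|\nabla\varphi|^2+K)$, which produces $|\partial\bar\partial\varphi|^2_{g,g_\varphi}/(|\nabla\varphi|^2+K)$ together with $\nabla F$-cross-terms and background curvature; and $\Delta_\varphi\varphi=n-tr_\varphi g$. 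Equation (\ref{csck2}) lets me substitute $-\underline R+tr_\varphi Ric_g$ for $\Delta_\varphi F$, and after a commutator this also controls $\Delta_g F$ up to $tr_\varphi g$ and background constants.

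Next I would use $dH(p_0)=0$ to eliminate the $\nabla F$ cross-terms and to cancel the third-order $\nabla^3\varphi$ contributions between the Yau and Bochner pieces; this is where the weight $n$ in front of $\log(|\nabla\varphi|^2+K)$ is forced, matching the dimensional factor in the AM--GM step. Choosing $\lambda$ larger than the relevant bisectional-curvature bound makes the aggregate coefficient of $tr_\varphi g$ strictly negative; combining with the basic AM--GM estimate $tr_\varphi g=\sum_\alpha 1/h_\alpha\ge n\,e^{-F/n}$ then yields at $p_0$ an inequality of the form $n+\Delta\varphi\le C_1(|\nabla\varphi|^2+K)$. Maximality of $p_0$ propagates this to a global bound (with the factor $e^{\lambda\,\mathrm{osc}\,\varphi}$ absorbed), and combined with the AM--GM reduction above I would obtain the claimed estimate; the small parameter $K$ is disposed of at the end either by taking $K\to 0$ or by observing that the case $\max_M|\nabla\varphi|^2\le K$ can be handled separately using a direct bound on $F$.

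The main obstacle I anticipate is the simultaneous bookkeeping of third-order derivatives in the Yau and Bochner expansions and the verification that, after substitution of the critical-point equation, every surviving term of indefinite sign is absorbed by the positive square $|\partial\bar\partial\varphi|^2_{g,g_\varphi}$. The weights in $H$ must be chosen with precise algebraic care: changing the coefficient before $\log(|\nabla\varphi|^2+K)$ from $n$ would leave an uncancellable $\nabla^3\varphi$ cross-term, and too small a $\lambda$ would leave $tr_\varphi g$ with a positive coefficient that the rest of the inequality cannot dominate.
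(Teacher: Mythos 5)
Your initial AM--GM reduction is fine, and the idea of a maximum-principle argument is the right genre, but the auxiliary function you propose leads to a step that cannot be justified. The Yau-type identity you invoke for $\Delta_\varphi \log(n+\Delta\varphi)$ produces the term $\Delta_g F/(n+\Delta\varphi)$, where $\Delta_g F = g^{i\bar j}F_{i\bar j}$ is the Laplacian with respect to the \emph{background} metric $g$, not $g_\varphi$. Equation (\ref{csck2}) gives you $\Delta_\varphi F = g_\varphi^{i\bar j}F_{i\bar j} = -\underline R + tr_\varphi Ric_g$, which is a genuinely different trace. In coordinates normal for $g$ and diagonalizing $\varphi_{i\bar j}$, one has $\Delta_g F = \sum_i F_{i\bar i}$ while $\Delta_\varphi F = \sum_i F_{i\bar i}/(1+\varphi_{i\bar i})$; passing from the second to the first requires pointwise control on the eigenvalues $1+\varphi_{i\bar i}$, which is precisely what is not yet available at this stage (indeed bounding those eigenvalues is the subject of the later sections). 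There is no commutator identity that relates $\Delta_g F$ to $\Delta_\varphi F$ up to $tr_\varphi g$; the discrepancy is a tensorial contraction against $(g^{i\bar j}-g_\varphi^{i\bar j})$, not a curvature commutator. So the plan stalls at the point where you need to substitute (\ref{csck2}) into the Yau expansion. (For this very reason, the $\Delta F$ term in the paper's integral $W^{2,p}$ estimate in Section 3 is handled by integration by parts, not by a pointwise substitution.)

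The paper's proof of this theorem avoids the issue entirely by choosing the auxiliary function $e^{F-\lambda\varphi}(K+|\nabla\varphi|^2)$ rather than one built from $n+\Delta\varphi$. Then $\Delta_\varphi(e^{F-\lambda\varphi})$ produces only $\Delta_\varphi F$, which (\ref{csck2}) handles directly, and $\Delta_\varphi(|\nabla\varphi|^2)$ produces third-order terms $\varphi_\alpha\varphi_{\bar\alpha i\bar i}/(1+\varphi_{i\bar i})$ that are converted to $\varphi_\alpha F_{\bar\alpha}+\varphi_{\bar\alpha}F_\alpha$ by the once-differentiated Monge--Amp\`ere equation --- so no fourth-order derivatives of $\varphi$ and no $\Delta_g F$ ever appear. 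At the maximum one keeps the good term $\tfrac14(n+\Delta\varphi)\ge \tfrac{n}{4}e^{F/n}$, which yields the bound. A secondary point: in the paper $K$ is taken \emph{large}, comparable to $\max_M|\nabla\varphi|^2$, and the propagation from the maximum point to a global bound uses $K+|\nabla\varphi|^2 \ge K$ in the denominator. Taking $K$ small, as you propose, would instead lose a factor of $(|\nabla\varphi|^2+K)(p_0)^{n-1}$ in the denominator when pushing the estimate away from $p_0$, which is not controlled.
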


For second order estimate, Chen-He\cite{chenhe12} establish an a priori bound on $n+\Delta \varphi$ in terms of $|\nabla F|_{L^p} (p > 2n)$ via integral estimate, in absense of (\ref{csck2}).  Inspired by this paper \cite{chenhe12} and utilizing the additional equation (1.2), we are able to obtain a $W^{2,p}$ estimate for any $p>0$, using only $||F||_0$. Theorem \ref{t1.5new} is used essentially in this estimate.

\begin{thm} (Theorem \ref{t4.1}, Corollary \ref{c3.2})\label{t1.7new} Let $\varphi$ be a smooth solution to (\ref{csck1}), (\ref{csck2}), then for any $1<p<\infty$, there exists a constant $\alpha(p)>0$, depending only on $p$, and another constant $C$, depending only on $||\varphi||_0$, the background metric $g$, and $p$,
 such that 
\begin{equation}
\int_Me^{-\alpha(p)F}(n+\Delta\varphi)^p\leq C.
\end{equation}
In particular, $||n+\Delta\varphi||_{L^p(dvol_g)}\leq C'$, where $C'$ has the same dependence as $C$ in this theorem, but additionally on $||F||_0$.
\end{thm}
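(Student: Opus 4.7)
The plan is to combine the Yau--Aubin Laplacian inequality with the auxiliary cscK equation (\ref{csck2}) and the gradient bound of Theorem \ref{t1.5new}, via a weighted integral argument in the spirit of Chen--He. Setting $u=n+\Delta\varphi$, the starting point is the classical pointwise inequality
$$\Delta_\varphi\log u\;\geq\;\frac{\Delta_g F}{u}-B\,\text{tr}_\varphi g,$$
where $B$ depends only on a lower bisectional-curvature bound of $g$. I would test this against $u^{p}e^{-\alpha F}$ and integrate against $\omega_\varphi^n=e^F\omega_0^n$. Integration by parts with the $\varphi$-Laplacian on the left produces a good negative term $-p\int u^{p-2}|\nabla u|^2_\varphi e^{-\alpha F}\omega_\varphi^n$ plus a cross-term in $\langle\nabla u,\nabla F\rangle_\varphi$. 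On the right, rewriting $\omega_\varphi^n=e^F\omega_0^n$ and integrating the $\Delta_g F$ term by parts in the background metric turns it into an expression involving $|\nabla F|_g^2$ and $\langle\nabla F,\nabla u\rangle_g$.

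To close these cross-terms one needs integral control of $\nabla F$, and here the cscK structure enters decisively: multiplying equation (\ref{csck2}) by $F\, e^{-\alpha F}\,u^{p-1}$ and integrating (using $\omega_\varphi^n=e^F\omega_0^n$ together with integration by parts) yields a weighted $L^2$-bound of the form $\int(1-\alpha F)|\nabla F|^2_\varphi e^{-\alpha F}u^{p-1}\omega_\varphi^n$ on the left, while the right-hand side is bounded by $|\underline R|+|\text{tr}_\varphi\text{Ric}_g|\leq C(1+\text{tr}_\varphi g)$. Theorem \ref{t1.5new}, which gives $|\nabla\varphi|^2\leq Ce^F$, is used at this stage to compare the $g$- and $g_\varphi$-norms of $\nabla F$ in a manner compatible with the weight $e^{-\alpha F}$, while Proposition \ref{p1.3new} keeps the weight from degenerating from below. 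Choosing $\alpha=\alpha(p)\in(0,1)$ sufficiently small, Cauchy--Schwarz then absorbs all of the cross-terms into the good gradient terms on both sides.

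The main obstacle is the residual trace term $\int u^{p}\,\text{tr}_\varphi g\,e^{-\alpha F}\omega_\varphi^n$. In diagonal form with eigenvalues $\lambda_i$ one has $\text{tr}_\varphi g\cdot e^F=e_{n-1}(\lambda)\leq C\,u^{n-1}$ by MacLaurin's inequality, so this term is at worst $C\int u^{p+n-1}e^{-\alpha F}\omega_0^n$---of strictly higher degree than the quantity we want to estimate. Closing the loop therefore forces a Moser iteration: applying the Sobolev embedding on $(M,g)$ to $u^{(p-1)/2}e^{-\alpha F/2}$, one converts the good $|\nabla u|^2_\varphi$ term (via the pointwise comparison $|\nabla u|^2_\varphi\cdot e^F\geq u^{-1}e^F|\nabla u|^2_g$) into an $L^{p\chi}$-bound on $u$ with controlled weight, and iterates, carefully tracking how $\alpha(p)$ must shrink as $p$ grows. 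The final statement $\|n+\Delta\varphi\|_{L^p(dvol_g)}\leq C'$ then follows at once by combining the weighted estimate with $\|F\|_0\leq C$, which bounds $e^{-\alpha F}$ below by a positive constant.
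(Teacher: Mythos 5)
Your plan misses the crucial structural idea of the paper's argument, and as written the iteration cannot close. In the paper, the test function is $u = e^{-\alpha(F+\lambda\varphi)}(n+\Delta\varphi)$; the extra weight $e^{-\alpha\lambda\varphi}$ is essential. When $\Delta_\varphi$ hits it, it produces $\alpha\sum_i\frac{\lambda-R_{i\bar i}}{1+\varphi_{i\bar i}}\geq\frac{\lambda\alpha}{2}\,\mathrm{tr}_\varphi g$, so after combining with the curvature term in Yau's Laplacian estimate one obtains $(\tfrac{\lambda\alpha}{2}-C)(n+\Delta\varphi)\,\mathrm{tr}_\varphi g$ with a \emph{positive} coefficient. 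Since $(n+\Delta\varphi)\,\mathrm{tr}_\varphi g\geq e^{-F/(n-1)}(n+\Delta\varphi)^{1+\frac{1}{n-1}}$, the trace term appears on the \emph{favourable} side of the inequality with a strictly higher power of $n+\Delta\varphi$, and it is this term that drives a purely algebraic recursion gaining $\frac{1}{n-1}$ in the exponent at each step. No Sobolev embedding is needed, and $\alpha(p)$ \emph{grows} (roughly like $p^2$) as $p\to\infty$.

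In your plan the $\lambda\varphi$ weight is absent, so $\mathrm{tr}_\varphi g$ stays on the bad side. Estimating $\mathrm{tr}_\varphi g\cdot e^F\leq C u^{n-1}$ produces a bad term of degree $p+n-1$ in $u$, whereas the Sobolev embedding applied to $u^{(p-1)/2}$ raises integrability only by the factor $\chi=\frac{n}{n-1}$. The inequality $(p-1)\chi>p+n-1$ forces $p>n^2-n+1$, so your Moser-type iteration does not improve integrability for small $p$ and has no valid starting point, yet the theorem asserts the bound for every $p>1$. You also state that $\alpha(p)$ should \emph{shrink} as $p$ grows, which is the wrong direction. Finally, multiplying equation (\ref{csck2}) by $Fe^{-\alpha F}u^{p-1}$ and invoking Theorem \ref{t1.5new} to ``compare the $g$- and $g_\varphi$-norms of $\nabla F$'' is not what the paper does and is not a correct use of that estimate: Theorem \ref{t1.5new} bounds $|\nabla\varphi|^2/e^F$, and in the paper it is used to control a cross term of the form $\nabla\varphi\cdot\nabla F$ that arises precisely from differentiating the weight $e^{-\alpha\lambda\varphi}$ that you have omitted.
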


If we can prove an upper bound for $F$, then the following theorem becomes very interesting.
\begin{thm}(Proposition \ref{p4.2})\label{t1.8new}
Let $\varphi$ be a smooth solution to (\ref{csck1}), (\ref{csck2}).
Then there exists $p_n>1$, depending only on $n$, and a constant $C$, depending on $||\varphi||_0$, $||F||_0$, $||n+\Delta\varphi||_{L^{p_n}(dvol_g)}$, and the background metric $g$, such that
\begin{equation}
n+\Delta\varphi\leq C.
\end{equation}
\end{thm}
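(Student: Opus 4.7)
\medskip
\noindent\textbf{Proof proposal.}
The plan is to run a Moser iteration on $u:=n+\Delta\varphi$, bootstrapping from the hypothesized $L^{p_n}$ bound to $L^\infty$, using the cscK system in an essential way to handle the fourth-order term that would otherwise be uncontrollable.

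\smallskip
\noindent\emph{Step 1 (differential inequality).}
I would start from the classical Aubin--Yau inequality
\[
\Delta_\varphi\log(n+\Delta\varphi)\;\geq\;\frac{\Delta F}{n+\Delta\varphi}\;-\;B\,\mathrm{tr}_\varphi\omega_0,
\]
where $B$ depends only on a lower bound for the bisectional curvature of $g$. Combined with $\Delta_\varphi\varphi=n-\mathrm{tr}_\varphi\omega_0$, and choosing $\lambda>B$, the quantity $\psi:=\log(n+\Delta\varphi)-\lambda\varphi$ satisfies
\[
\Delta_\varphi\psi\;\geq\;\frac{\Delta F}{n+\Delta\varphi}\;+\;c_1\,\mathrm{tr}_\varphi\omega_0\;-\;C_1,
\]
with $c_1,C_1$ controlled. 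Equivalently, setting $w:=e^{-\lambda\varphi}(n+\Delta\varphi)$, one can rewrite this as a pointwise differential inequality for $w$.

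\smallskip
\noindent\emph{Step 2 (test-function and integration by parts).}
Multiply the inequality by $w^{p-1}$ for $p\geq p_n$ and integrate with respect to $\omega_\varphi^n=e^F\omega_0^n$. After integrating the $\Delta_\varphi\psi$ term by parts, the left side produces the expected good gradient term $\int|\nabla w^{p/2}|_\varphi^2\,e^F\omega_0^n$. The $c_1\,\mathrm{tr}_\varphi\omega_0$ term is discarded (nonnegative). The bad term is $\int w^{p-1}\,\tfrac{\Delta F}{n+\Delta\varphi}\,e^F\omega_0^n$; for this one replaces $\Delta F\cdot e^F\omega_0^n$ by the divergence of $e^F\nabla F$ against $\omega_0^n$ (up to background-curvature terms), integrates by parts, and uses $|\nabla F|^2$ in combination with the factor $1/(n+\Delta\varphi)$ and Young's inequality so that the resulting gradient contribution is dominated by $\varepsilon\int|\nabla w^{p/2}|_\varphi^2 e^F\omega_0^n$ plus lower-order zeroth-order terms in $w$.

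\smallskip
\noindent\emph{Step 3 (using the second equation).}
The gradient-of-$F$ contribution is controlled by invoking (\ref{csck2}): after integration by parts in the $\Delta F$ term, the remaining pointwise factors are of the form $\Delta_\varphi F=-\underline{R}+\mathrm{tr}_\varphi\mathrm{Ric}_g$, which is bounded by $C(1+\mathrm{tr}_\varphi\omega_0)$. Using $\|F\|_0\leq C$ makes $e^F$ bounded above and below, so the measures $\omega_\varphi^n$ and $\omega_0^n$ are comparable. The upshot is a self-improving inequality
\[
\int_M|\nabla w^{p/2}|_g^{2}\,\omega_0^n\;\leq\;C\,p\int_M w^{p}(1+w^{\alpha_n})\,\omega_0^n,
\]
for some exponent $\alpha_n<1$, where the arithmetic-geometric-mean identity $\mathrm{tr}_\varphi\omega_0\leq C(n+\Delta\varphi)^{n-1}/\det\omega_\varphi$ (combined with $\det\omega_\varphi=e^F$ bounded) is used to convert $\mathrm{tr}_\varphi\omega_0$ into a power of $w$.

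\smallskip
\noindent\emph{Step 4 (Moser iteration).}
Combining with the Sobolev inequality on the fixed background $(M,g)$, one obtains
\[
\|w\|_{L^{\sigma p}(\omega_0^n)}\leq(Cp)^{\kappa/p}\|w\|_{L^{p}(\omega_0^n)}^{1+\beta_n/p},
\qquad \sigma=\tfrac{n}{n-1},
\]
valid for all $p$ at least as large as some $p_n>1$ chosen so the correction exponent $\beta_n/p$ does not prevent convergence of the iteration. Iterating $p\mapsto\sigma p$ produces $\|w\|_\infty\leq C\|w\|_{L^{p_n}}$, which, together with $\|\varphi\|_0\leq C$, yields the desired pointwise bound on $n+\Delta\varphi$.

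\smallskip
\noindent\emph{Main obstacle.}
The substantive difficulty is Step 2--3: one cannot dominate $\Delta F$ pointwise by $n+\Delta\varphi$ (only $\Delta_\varphi F$ is controlled). The trick is to integrate against $\omega_\varphi^n$ rather than $\omega_0^n$, so that $\Delta F\cdot e^F\omega_0^n$ becomes a true divergence up to background curvature, and then to absorb the resulting $|\nabla F|^2$ on the good gradient side via Young's inequality. Keeping track of the $p$-dependence of the constant (so that Moser iteration converges) and choosing the threshold exponent $p_n$ large enough to swallow the sub-linear correction $w^{\alpha_n}$ is the other delicate point.
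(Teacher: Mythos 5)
The approach here misses the paper's key idea and the iteration as you set it up does not close. In your Steps~2--3 you multiply the Aubin--Yau inequality by $w^{p-1}$, integrate against $\omega_\varphi^n = e^F\omega_0^n$, and integrate the $\frac{\Delta F}{n+\Delta\varphi}$ term by parts. Carrying this out, with $\xi := w^{p-2}e^{-\lambda\varphi}e^F$ one finds
\begin{equation*}
-\int_M \xi\,\Delta F\,\omega_0^n \;=\; \int_M \nabla\xi\cdot\nabla F\,\omega_0^n
\;=\; (p-2)\!\int w^{p-3}e^{-\lambda\varphi+F}\nabla w\cdot\nabla F
\;-\;\lambda\!\int w^{p-2}e^{-\lambda\varphi+F}\nabla\varphi\cdot\nabla F
\;+\;\int w^{p-2}e^{-\lambda\varphi+F}|\nabla F|^2,
\end{equation*}
and the last term has the wrong sign: it appears on the right-hand side as a genuinely positive term $\int w^{p-2}|\nabla F|^2 e^{F-\lambda\varphi}\omega_0^n$. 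Your claim that Young's inequality turns the $|\nabla F|^2$ contributions into $\varepsilon\int|\nabla w^{p/2}|_\varphi^2 e^F$ plus zeroth-order terms in $w$ is where the gap lies: $\nabla F$ and $\nabla w$ are independent third-order quantities, so there is no way to absorb $|\nabla F|^2$ into $|\nabla_\varphi w^{p/2}|^2_\varphi$ pointwise, and there is no available pointwise bound of $|\nabla F|^2$ by a power of $n+\Delta\varphi$. Your Step~3 also misidentifies what remains after the integration by parts: the leftover is $|\nabla F|^2$, not $\Delta_\varphi F$, so invoking $\Delta_\varphi F = -\underline{R}+\mathrm{tr}_\varphi\mathrm{Ric}_g$ does not help at that point. (This is exactly the obstruction that confines Section~3 of the paper to weighted $L^p$ bounds with an exponent $\alpha(p)\to\infty$: there, the sign of the $|\nabla F|^2$ term is flipped by taking the weight $e^{-\alpha F}$ with $\alpha>2p+1$, which prevents passing to $p=\infty$.)

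The paper's proof of Proposition~\ref{p4.2} therefore takes a genuinely different route, which is the idea you need: rather than iterating on $n+\Delta\varphi$ alone, one controls the pair $\bigl(|\nabla_\varphi F|^2_\varphi,\ n+\Delta\varphi\bigr)$ simultaneously through the single auxiliary quantity $u = e^{F/2}|\nabla_\varphi F|^2_\varphi + K(n+\Delta\varphi)$. A Bochner formula for $\Delta_\varphi(e^{B(F)}|\nabla_\varphi F|^2_\varphi)$ with the specific choice $B'=\tfrac12$ makes the dangerous cubic terms $F_{j\bar i}F_iF_{\bar j}/((1+\varphi_{i\bar i})(1+\varphi_{j\bar j}))$ cancel exactly against $Ric_{\varphi,i\bar j}=R_{i\bar j}-F_{i\bar j}$, and the second equation (\ref{csck2}) enters by \emph{differentiating} it to compute $(\Delta_\varphi F)_i$, producing terms of order $|\varphi_{\beta\bar\alpha i}|^2/\bigl((1+\varphi_{\alpha\bar\alpha})(1+\varphi_{\beta\bar\beta})\bigr)$ that are absorbed by the corresponding good term in the Yau formula for $\Delta_\varphi(n+\Delta\varphi)$. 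The positive Hessian term $|F_{i\bar\alpha}|^2_\varphi$ coming from Bochner handles the $KF_{i\bar i}$ contribution. The net result is the pointwise inequality $\Delta_\varphi u \ge -C(n+\Delta\varphi)^{3n-3}(u+1)$, and only then does Moser iteration come in, applied to $u$, with the hypothesized $\|n+\Delta\varphi\|_{L^{p_n}}$ bound used to dominate the potential $(n+\Delta\varphi)^{3n-3}$; this is what fixes $p_n$. Without the $|\nabla_\varphi F|^2_\varphi$ component in the auxiliary quantity, there is no mechanism to close the $\Delta F$ term, so your Steps~2--4 cannot be made to work as stated.
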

It is interesting to compare this result with second derivative estimates for complex Monge-Amp$\grave{e}$re equations. In \cite{LLZ}, the authors obtained $C^{2,\alpha}$ estimates for complex Monge-Amp$\grave{e}$re equation, depending on $C^{1,\beta}$ bound of the solution (with $\beta$ close enough to 1) and $C^{\alpha}$ bound of the right hand side.
In \cite{He12}, the authors obtained $W^{3,p}$ bound of solution to complex Monge-Amp$\grave{e}$re depending only on $C^0$ bound of the solution and $W^{1,p}$ bound of right hand side for $p>2n$.
In this result, we are not assuming any regularity of the right hand side $e^F$, but assumes quite strong bound ($W^{2,p}$ for $p$ large) as a price to pay, and the second equation (\ref{csck-2}) needs to be used in an essential way.

Theorem \ref{t1.8new} is reminiscent to a renowned problem in $\mathbb{C}^n$ which goes back to S. T. Yau, E. Calabi:  whether global solution of Calabi Yau metric in $\mathbb{C}^n$ must be Euclidean metric or not? 
 This problem is disapproved by a nontrivial construction of Calabi Yau metric in $\mathbb{C}^2$ by C. LeBrun. Perhaps one need to strengthen the assumption by assuming it is asymptotically Euclidean at $\infty.\;$ This is made known to be true by G. Tian in dimension $n =2\:$ and conjectured to be true in all dimensions.  While we prepare this paper, it is now known through a surprising result of Y. Li in dimension 3 \cite{LiYang-17} and then
Conlon-Rochon \cite{Conlon-Rochon17}, G. Szekelyhidi \cite{GS2017} in all dimensions that this fails in general. This exciting new development makes statement like Corollary \ref{c1.3new} below more interesting.  This corollary offers a different point of view: If we control asymptotical growth of the underlying metrics, then the rigidity result still hold for scalar flat K\"ahler metrics (in particular Calabi Yau metrics) in $\mathbb{C}^n.\;$

\begin{cor}\label{c1.3new} Let $u$ be a global smooth pluri-subharmonic function such that $\sqrt{-1}\partial\bar{\partial}u$ defines a scalar flat metric on $\mathbb{C}^n$. If for some $p>3n(n-1)$, we have
\[
\displaystyle \liminf_{r\rightarrow \infty} \; {1\over r^{2n}} \displaystyle \int_{B_r(0)\subset \mathbb{C}^n} \; (\Delta u)^p +\big(\sum_k\frac{1}{u_{k\bar{k}}}\big)^p< \infty,
\]
then the Levi Hessian of $u$ is constant. 
\end{cor}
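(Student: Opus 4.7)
I use the Euclidean K\"ahler form $\omega_0 = \sqrt{-1}\sum_i dz_i\wedge d\bar z_i$ as background, so $\mathrm{Ric}(\omega_0) = 0$, and set $\varphi = u - |z|^2/2$ and $F := \log\det(u_{i\bar j})$. The scalar-flat hypothesis then reduces (\ref{csck2}) to the single equation $\Delta_{\omega_u} F = 0$ on all of $\mathbb{C}^n$. The plan is to first promote the integral hypothesis to a global quasi-isometry $c_1\omega_0 \le \omega_u \le c_2\omega_0$, and then conclude by Liouville plus a J\"orgens--Calabi--Pogorelov-type rigidity.

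For the quasi-isometry, I would localize the proofs of Theorems \ref{t1.7new} and \ref{t1.8new} to a ball $B_r(0)\subset\mathbb{C}^n$ by inserting cutoff functions into their integration-by-parts arguments. The resulting local form of Theorem \ref{t1.8new} should deliver a pointwise bound on $n+\Delta u$ on $B_{r/2}(0)$ in terms of local $L^\infty$ bounds for $\varphi$ and $F$, plus a local $L^{p_n}$-bound on $n+\Delta u$. The upper bound $F \le C$ on $B_r$ is immediate once $\Delta u$ is bounded, via Hadamard $e^F = \det u_{i\bar j} \le (\Delta u/n)^n$. The lower bound $F \ge -C$ is where the hypothesis on $\sum_k 1/u_{k\bar k}$ enters: an appropriate Hadamard-type inequality relating $e^{-F} = \det(u_{i\bar j})^{-1}$ to a product involving $n-1$ diagonal reciprocals of the metric costs a factor of $n-1$ on the exponent, which combined with the exponent $p_n$ of Theorem \ref{t1.8new} (itself of order $3n$) produces exactly the threshold $p > 3n(n-1)$. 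Choosing a sequence $r_k\to\infty$ realizing the $\liminf$, the hypothesis gives $\int_{B_{r_k}}(\Delta u)^p + (\sum_k 1/u_{k\bar k})^p \le C r_k^{2n}$; a scaling argument (the volume of $B_{r_k}$ in $\mathbb{C}^n$ is of order $r_k^{2n}$) makes every local input uniform in $k$, so one obtains constants $c_1, c_2$ on $B_{r_k/2}(0)$ independent of $k$, and letting $k\to\infty$ yields the quasi-isometry on all of $\mathbb{C}^n$.

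Once the quasi-isometry is in hand, $F$ is a bounded solution on $\mathbb{C}^n$ of $g^{i\bar j}_{\varphi}F_{i\bar j} = 0$, which is uniformly elliptic with bounded measurable coefficients. The De Giorgi--Nash--Moser Liouville theorem on $\mathbb{C}^n$ forces $F$ to be constant, so $\det(u_{i\bar j}) \equiv c$. By the complex analogue of the J\"orgens--Calabi--Pogorelov theorem of Riebesehl--Schulz---smooth plurisubharmonic solutions of $\det(u_{i\bar j}) = c$ on $\mathbb{C}^n$ whose Hessian is pinched between two positive constants are quadratic polynomials---we conclude that $u_{i\bar j}$ is constant; alternatively, one can differentiate the Monge--Amp\`ere equation twice and apply Liouville once more to the uniformly elliptic linear equation satisfied by each $u_{i\bar j}$. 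The main obstacle is the first step: writing down localized versions of Theorems \ref{t1.7new}--\ref{t1.8new} with error terms from the cutoffs controlled uniformly as $r\to\infty$, and pinning the constants down so that the combined $(n-1)$ loss from the Hadamard-type inequality and the $p_n$ from Theorem \ref{t1.8new} land precisely at the stated threshold $p > 3n(n-1)$.
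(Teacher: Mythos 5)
Your overall architecture---rescaling to a unit ball so that the hypothesis delivers uniform $L^p$ bounds, deriving quasi-isometry on the rescaled balls, and then concluding by a Liouville/rigidity argument---is the right shape and matches the paper's strategy (the paper rescales $u_s(z)=r_s^{-2}u(r_s z)$ and applies Proposition \ref{p6.1} on $B_1$, which is precisely ``local input uniform in $s$''). But the step you flag as the main obstacle contains a genuine circularity that would prevent the proposal from closing as written. You propose to apply a localized Theorem \ref{t1.8new} to bound $n+\Delta u$ pointwise, and you correctly note that Theorem \ref{t1.8new} needs $\|F\|_0$ as input; yet your bound $F\le C$ is derived only \emph{after} $\Delta u$ is bounded (via Hadamard), and your lower bound $F\ge -C$ from $\sum_k 1/u_{k\bar k}$ is an $L^p$ control, not a pointwise one. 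You therefore never obtain $\|F\|_0$ before you need it.

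The paper breaks this loop in a way you do not mention: in Proposition \ref{p6.1} it first rewrites the scalar equation $\Delta_\phi G=-\underline R$ in \emph{divergence form}, $\mathrm{Re}\,\partial_i\big(\det(\phi_{\alpha\bar\beta})\,\phi^{i\bar j}\partial_{\bar j}G\big)=e^G G$, and bounds $G$ by Moser iteration for degenerate divergence-form operators (Lemma \ref{l4.2}). The ellipticity is controlled by $\lambda=(\Delta\phi+\sum_i 1/\phi_{i\bar i})^{n-1}$; Lemma \ref{l4.2} needs $\lambda\in L^p(B_1)$ with $p>3d/2=3n$, which is exactly where your integral hypothesis with exponent exceeding $3n(n-1)$ enters and where the factor $(n-1)$ comes from. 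Only \emph{after} $G$ (i.e.\ $F$) is bounded is the maximum-principle / Moser argument for $e^{G/2}|\nabla_\phi G|^2+\Delta\phi$ applied to get the two-sided bound on the complex Hessian and $|\nabla G|\le C$. A side benefit you did not anticipate: Proposition \ref{p6.1} needs no local $L^\infty$ bound on $\phi$ for this conclusion, so there is no cutoff-error bookkeeping for $\varphi$ to worry about.

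Your endgame is a valid alternative to the paper's. The paper never invokes a Liouville theorem for $F$: from $|\nabla F_s|\le C_7$ on $B_{1/2}$ uniformly, it reads off $|\nabla F|\le C_7/r_s$ on $B_{r_s/2}$ and lets $s\to\infty$ to get $\nabla F\equiv 0$; then it applies Evans--Krylov to the rescaled constant-determinant equation to get a uniform H\"older bound that, after scaling back, kills the oscillation of $u_{i\bar j}$. Your proposal (De Giorgi--Nash--Moser Liouville for the bounded solution $F$ of a uniformly elliptic equation, then a complex J\"orgens--Calabi--Pogorelov theorem) achieves the same thing; the paper's version is slightly more self-contained since it never needs a bounded-measurable-coefficient Liouville theorem and produces the rigidity and the vanishing of $\nabla F$ from the same rescaling. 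If you replace your localize-Theorems-\ref{t1.7new}/\ref{t1.8new}-with-cutoffs plan by the divergence-form Moser step for $G$ followed by the $|\nabla_\phi G|^2+\Delta\phi$ iteration, the circularity disappears and either endgame finishes the proof.
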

We will prove this corollary in section 6, using a similar argument as Proposition \ref{p6.1}  We observe that this theorem covers the well-known Calabi Yau metric equation $$\det u_{i\bar{j}}=1$$ as a special case. One interesting question is, what is the smallest number $p$ for which this corollary still holds?  In section 6, we also show that when $n=2$, for a solution $\varphi$ of cscK in a domain of $\mathbb{C}^n$, if $|\nabla\varphi|$ is locally bounded, then the volume ratio $ {\omega_\varphi^n\over \omega^n}$ is also bounded from above locally.
It is not clear to us if this estimate can be generalized to higher dimensions.\\

Finally we would like to explain the organization of this paper:

\sloppy  In section 2, we prove Proposition \ref{p1.3new}, Theorem \ref{t1.5new} and Theorem \ref{t1.6new}.

In section 3, we prove Theorem \ref{t1.7new} by iteration, which is a crucial step towards the main result.

In section 4, we use iteration again to improve $L^p$ bound of $n+\Delta\varphi$ for $p<\infty$ to an $L^{\infty}$ bound of $n+\Delta\varphi$, proving Theorem \ref{t1.8new}. 
This estimate requires a bound for $||n+\Delta\varphi||_{L^p}$ for some $p>p_n$ \footnote{  Here $p_n\leq (3n-3)(4n+1)$. But this most likely is not sharp.} , depending on $||F||_0$. The key ingredient is a calculation for $\Delta_{\varphi}(|\nabla_{\varphi}F|^2)$.
Combining the results in section 2, 3, 4 as well as Proposition \ref{p2.1} gives estimate for all higher derivatives in terms of $||\varphi||_0$ and $||F||_0$. 

In Section 2-4, we always assume $|\varphi|$ is a priori bounded.  This assumption is removed in Section 5 where we prove Theorem \ref{t1.3new} and Theorem \ref{t1.4new}.
From these two results, we get estimate for $||F||_0$ and $||\varphi||_0$ depending only on entropy bound of $F$.
The key ingredient is the use of $\alpha$-invariant and the construction of a new test function.
On the other hand, if we start with a bound for $||\varphi||_0$, and use the convexity of $K$-energy along $C^{1,1}$ geodesics, it is relatively easy to get an entropy bound of $F$, hence all higher estimates.

In section 6, we obtain some interior estimates for cscK in a bounded domain of $\mathbb{C}^n$. 
Such estimates are not directly needed for our main results but may be of independent interest. \\

{\bf Acknowledgment} In the Fall of 1997, Sir Simon invited the first named author to join him in exploring the space of K\"ahler potentials, and 
Sir Simon has been remarkably generous about  sharing his time and ideas ever since. It is therefore  a deep pleasure to 
dedicate this paper to Sir Simon Donaldson, in celebration of his 60th birthday, and in acknowledgment of 
the far-reaching influence of his profound mathematical ideas, which have changed the landscape of mathematics so much.

\section{The volume ratio ${\omega_\varphi^n\over \omega^n}$ and $C^1$ bound on K\"ahler potential}
The main theorem in this section is to prove that the first derivative of $\varphi$ is pointwisely controlled by volume ratio $e^F$ from above,
assuming a bound for $||\varphi||_0$. Conversely, the bound for $||\nabla \varphi||_0$ can in turn control $e^F.\;$ However, this control is much
weaker since it is of global nature.  \\

First we show that a $C^0$ bound for $\varphi$ implies a lower bound for $F$.
\begin{prop}\label{p2.1}
Let $(\varphi,F)$ be smooth solutions to cscK, then there exists a positive constant $C_2$ depending only on $||\varphi||_0$ and the upper bound of the Ricci form of the background metric $g$, such that $F\geq -C_2$.
\end{prop}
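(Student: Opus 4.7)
The plan is to use a one-line maximum principle argument applied to the test function
$$H := F + A\varphi,$$
where $A>0$ is chosen large enough that $Ag - \text{Ric}_g \geq \eta\, g$ pointwise for some $\eta>0$; this is possible precisely because we are given an upper bound on the Ricci form of $g$. The sign in front of $\varphi$ matters: I want the maximum-principle inequality at a minimum of $H$ to yield an \emph{upper} bound on $\text{tr}_{\varphi} g$, not a lower one.

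First I would compute $\Delta_{\varphi} H$. Using $\Delta_{\varphi}\varphi = n - \text{tr}_{\varphi}g$ and equation (\ref{csck2}),
$$\Delta_{\varphi} H = -\underline{R} + \text{tr}_{\varphi}\text{Ric}_g + An - A\,\text{tr}_{\varphi}g = An - \underline{R} - \text{tr}_{\varphi}(Ag - \text{Ric}_g).$$
Let $p_0$ be a minimum point of $H$ on $M$. Then $\Delta_{\varphi} H(p_0)\geq 0$, so
$$\eta\,\text{tr}_{\varphi}g(p_0) \leq \text{tr}_{\varphi}(Ag-\text{Ric}_g)(p_0) \leq An - \underline{R}.$$

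The next step links this to (\ref{csck1}) via the AM--GM inequality. Denoting by $\mu_1,\dots,\mu_n$ the eigenvalues of $g_{\varphi}$ with respect to $g$, (\ref{csck1}) says $\prod_i \mu_i = e^F$, and the eigenvalues of $g$ with respect to $g_{\varphi}$ are $1/\mu_i$, so
$$\text{tr}_{\varphi}g = \sum_i \frac{1}{\mu_i} \geq n\Big(\prod_i \frac{1}{\mu_i}\Big)^{1/n} = n\,e^{-F/n}.$$
Combined with the bound at $p_0$, this gives $n\,e^{-F(p_0)/n}\leq (An-\underline{R})/\eta$, i.e.\ an explicit lower bound
$$F(p_0) \geq -n\log\!\left(\frac{An-\underline{R}}{n\eta}\right).$$

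Finally, since $H(p) \geq H(p_0)$ for all $p \in M$ and $\|\varphi\|_0$ is controlled,
$$F(p) = H(p) - A\varphi(p) \geq H(p_0) - A\|\varphi\|_0 \geq F(p_0) - 2A\|\varphi\|_0 \geq -C_2,$$
with $C_2$ depending only on $A$ (hence on the upper Ricci bound), $\underline{R}$, $n$, and $\|\varphi\|_0$. There is no substantive obstacle; the only conceptual point is selecting the correct sign on the linear term so that the maximum-principle inequality at the minimum controls $\text{tr}_{\varphi}g$ from above, which is exactly what lets us extract a lower bound on $F$ through AM--GM. This is also why only the upper bound of $\text{Ric}_g$ enters (we need $Ag - \text{Ric}_g\geq\eta g$), consistent with the hypothesis.
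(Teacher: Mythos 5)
Your proposal is correct and is essentially the paper's own proof: both apply the maximum principle to $F+\lambda\varphi$ for a suitably large $\lambda$, obtain an upper bound on $\operatorname{tr}_{\varphi}g$ at the minimum point, convert it to a lower bound on $F$ there via AM--GM, and then use $\|\varphi\|_0$ to propagate. The only differences are cosmetic (invariant notation versus a normal-coordinate computation, and a slightly different but equivalent choice of constant).
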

This proposition first appeared in \cite{HeZeng17}. However, for the convenience of the reader, we include a proof here.
\begin{proof}
This step is relatively easy. Let $p\in M$, we may choose a local normal coordinate in a neighborhood of $p$, such that 
\begin{equation} 
g_{i\bar{j}}(p)=\delta_{ij},\;\;\nabla g_{i\bar{j}}(p)=0,\;\;{\rm  and} \; \;\varphi_{i\bar{j}}(p)=\varphi_{i\bar{i}}(p)\delta_{ij}.\label{eq:normalcoord} \end{equation}
In this paper,  we will always work under this coordinate unless specified otherwise.
Choose the constant $C_{2.1}$ to be $C_{2.1}=2\displaystyle \max_M R_{i\bar{i}}+\frac{2|\underline{R}|}{n}+1$.
Under this coordinate, we can calculate:
\begin{equation}\label{calc}
\begin{split}
\Delta_{\varphi}(F+C_{2.1}\varphi)&=-\underline{R}+\frac{R_{i\bar{i}}}{1+\varphi_{i\bar{i}}}+C_{2.1}\frac{\varphi_{i\bar{i}}}{1+\varphi_{i\bar{i}}}\\
&=-\underline{R}+C_{2.1}n-\frac{C_{2.1}-R_{i\bar{i}}}{1+\varphi_{i\bar{i}}}\leq-\underline{R}+C_{2.1}n-\frac{nC_{2.1}}{2}e^{-\frac{F}{n}}\\
&\leq2C_{2.1}n-\frac{nC_{2.1}}{2}e^{-\frac{F}{n}}.
\end{split}
\end{equation}
In the second line above, we used the arithemetic-geometric inequality:
$$
\frac{1}{n}\sum_i\frac{1}{1+\varphi_{i\bar{i}}}\geq\Pi_i(1+\varphi_{i\bar{i}})^{-\frac{1}{n}}=e^{-\frac{F}{n}}.
$$
Now let $p_0$ be such that the function $F+C_{2.1}\varphi$ achieves minimum at $p_0$, then from (\ref{calc}), we see
\begin{equation}
0\leq 2C_{2.1}n-\frac{nC_{2.1}}{2}e^{-\frac{F}{n}}(p_0).
\end{equation}
This gives a lower bound for $F$, depending only on $C^0$ bound for $\varphi$.

\end{proof}
Next we move on to estimate the upper bound of $F$ in terms of $\displaystyle \max_M |\nabla\varphi|$.
\begin{thm}\label{t2.1} Let $(\varphi,F)$ be smooth solutions to cscK, then there exists a constant $C_{2.2}$, depending only on $||\nabla\varphi||_0$ and lower bound of bisectional cruvature of the background metric $g$, such that $F\leq C_{2.2}$.
\end{thm}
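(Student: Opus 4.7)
The goal is to bound $F$ from above in terms of $\|\nabla\varphi\|_{C^0}$ and a lower bound on the bisectional curvature of $(M,g)$. My plan is a maximum-principle argument for $\Delta_\varphi$ applied to the test function
\[
H \;=\; \frac{F}{n} \;-\; \log\!\bigl(|\nabla\varphi|^2 + K\bigr),
\]
where $K>0$ is any fixed constant. Note that $H$ is invariant under $\varphi\mapsto\varphi+c$, so its estimate depends only on $\|\nabla\varphi\|_{C^0}$ and not on any normalization of $\varphi$, which matches the hypotheses of the theorem. The factor $1/n$ in front of $F$ is dictated by the arithmetic--geometric inequality $tr_\varphi g \ge n e^{-F/n}$ applied to the eigenvalues of $(g_\varphi)^{-1}g$; this is the mechanism through which an upper bound on $e^{F/n}$ will eventually emerge.

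Let $p_0$ be a maximum point of $H$. Then both $\nabla H(p_0)=0$ and $\Delta_\varphi H(p_0)\le 0$ hold. The first-order condition rewrites as
\[
\frac{F_k}{n} \;=\; \frac{(|\nabla\varphi|^2)_k}{|\nabla\varphi|^2+K} \text{ at } p_0.
\]
For the Laplace inequality, \eqref{csck2} gives $\Delta_\varphi F = -\underline{R} + tr_\varphi Ric_g$, and $|tr_\varphi Ric_g|$ can be controlled by a constant multiple of $tr_\varphi g$ via the bisectional curvature hypothesis. Writing $u = |\nabla\varphi|^2 + K$, a standard Bochner-type identity in the normal coordinates \eqref{eq:normalcoord} gives
\[
\Delta_\varphi u \;\ge\; 2\mathrm{Re}\langle\nabla F,\nabla\varphi\rangle_g \;-\; C\,|\nabla\varphi|^2\,tr_\varphi g,
\]
after discarding the non-negative quadratic ``good'' terms in $\varphi_{ij}, \varphi_{i\bar j}$; the cross term with $F$ comes from differentiating \eqref{csck1} once, and the curvature correction is controlled using the lower bound on the bisectional curvature. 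Finally, $\Delta_\varphi\log u = \Delta_\varphi u/u - |\nabla_\varphi u|^2/u^2$.

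Assembling these pieces at $p_0$ and using the first-order condition to replace $F_k$ by $n(|\nabla\varphi|^2)_k/u$, the ``bad'' term $|\nabla_\varphi u|^2/u^2$ cancels against the contribution of $2\mathrm{Re}\langle\nabla F,\nabla\varphi\rangle_g/u$. Combined with the AM--GM bound $tr_\varphi g \ge n e^{-F/n}$, the inequality $\Delta_\varphi H(p_0)\le 0$ collapses to a relation of the schematic form
\[
c_1\, e^{-F/n}(p_0) \;\le\; c_2 + c_3\,\frac{|\nabla\varphi|^2(p_0)}{|\nabla\varphi|^2(p_0)+K},
\]
with constants depending only on $\|\nabla\varphi\|_{C^0}$, the bisectional curvature bound, and $\underline{R}$. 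Since the right-hand side is bounded, this produces an upper bound on $e^{F/n}(p_0)$; because $H(p_0)$ is the global maximum of $H$, the resulting upper bound propagates pointwise to all of $M$.

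\textbf{Main obstacle.} I expect the principal technical challenge to be the precise Bochner computation at $p_0$ and the algebraic cancellation between the cross term $2\mathrm{Re}\langle\nabla F,\nabla\varphi\rangle_g$ (obtained from differentiating \eqref{csck1}) and the term $|\nabla_\varphi u|^2/u^2$ arising from $\Delta_\varphi \log u$. This cancellation is what allows the argument to close without invoking any estimate on $|\nabla F|$, which is unavailable at this stage of the paper. The bisectional curvature hypothesis enters essentially in bounding the Riemann-tensor contribution to the Bochner identity by $tr_\varphi g$, which is in turn what enables the AM--GM step to convert the closing inequality into a genuine upper bound on $e^{F/n}$.
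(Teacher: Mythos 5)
Your proposal does not close; there are two related sign/direction errors that make the argument fail.

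\textbf{Wrong sign on $\log(|\nabla\varphi|^2+K)$.} You take $H = F/n - \log u$ with $u=|\nabla\varphi|^2+K$, but the paper's test function is $v=e^{F-\lambda\varphi}(K+|\nabla\varphi|^2)$, i.e.\ $\log v = F - \lambda\varphi + \log u$: a \emph{plus} sign. This is not cosmetic. The Bochner formula (equation \eqref{eq:firstderivative} in the paper) gives
\[
\Delta_\varphi u = \frac{R_{\alpha\bar\beta i\bar i}\varphi_\alpha\varphi_{\bar\beta}}{1+\varphi_{i\bar i}} + \frac{|\varphi_{i\alpha}|^2}{1+\varphi_{i\bar i}} + \frac{\varphi_{i\bar i}^2}{1+\varphi_{i\bar i}} + 2\mathrm{Re}(\varphi_\alpha F_{\bar\alpha}),
\]
and the ``good'' positive terms $\varphi_{i\bar i}^2/(1+\varphi_{i\bar i})$ (comparable to $n+\Delta\varphi$, hence to $e^{F/n}$) are precisely what powers the maximum principle in the paper: with the plus sign they appear as $+\Delta_\varphi u/u$ and force an upper bound on $F$. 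With your minus sign, these same terms enter as $-\Delta_\varphi u/u$, i.e.\ very \emph{negative} when $F$ is large, so $\Delta_\varphi H(p_0)\le 0$ is automatically satisfied and carries no information. Moreover your plan to ``discard the non-negative quadratic good terms'' bounds $\Delta_\varphi u$ from \emph{below}, whereas $-\Delta_\varphi u/u$ needs a bound on $\Delta_\varphi u$ from \emph{above}; dropping the good terms goes in the wrong direction for your inequality.

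\textbf{Wrong AM--GM and wrong direction of the final inequality.} You invoke $tr_\varphi g \ge n e^{-F/n}$ and arrive at a schematic inequality $c_1 e^{-F/n}(p_0)\le c_2 + c_3|\nabla\varphi|^2/u$. This gives $e^{-F/n}(p_0)\le C$, which is a \emph{lower} bound on $F$, not an upper bound; your sentence ``this produces an upper bound on $e^{F/n}(p_0)$'' does not follow. The inequality $tr_\varphi g \ge n e^{-F/n}$ is the one the paper uses in Proposition \ref{p2.1} for the lower bound on $F$. For the upper bound you want, the relevant AM--GM is the other one, $n+\Delta\varphi \ge n e^{F/n}$, which the paper reaches through the $\varphi_{i\bar i}^2/(1+\varphi_{i\bar i})$ term and which yields $0 \ge -C + c\,e^{F(p_0)/n}$ at the maximum. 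Repairing the argument therefore requires both flipping the sign in front of $\log u$ (or, as in the paper, working directly with the product $e^{F-\lambda\varphi}u$ without taking logarithms), and keeping the good quadratic Bochner terms so that the bound closes through $n+\Delta\varphi\ge ne^{F/n}$. The paper also includes a factor $e^{-\lambda\varphi}$ with $\lambda$ large to absorb the curvature contribution $-C|\nabla\varphi|^2 tr_\varphi g$ via the term $(\lambda+R_{i\bar i})/(1+\varphi_{i\bar i})$; without an analogous device your curvature error term is left uncontrolled. (The dependence of the constant only on $\|\nabla\varphi\|_0$ is not lost by introducing $e^{-\lambda\varphi}$: the oscillation of $\varphi$ is bounded by $\mathrm{diam}(M,g)\cdot\|\nabla\varphi\|_0$.)
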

\begin{proof}
The argument uses maximum principle again. This time we will calculate $\Delta_{\varphi}(e^{F-\lambda\varphi}(K+|\nabla\varphi|^2))$ where $\lambda,K>0$ are constants to be determined later. We choose a normal coordinate (equation (\ref{eq:normalcoord}))  and do the following calculations.
We have

\begin{eqnarray}\label{start}
\qquad\Delta_{\varphi} (e^{F-\lambda\varphi}(K+|\nabla\varphi|^2)) &= & \Delta_{\varphi}(e^{F-\lambda\varphi})(K+|\nabla\varphi|^2) +e^{F-\lambda\varphi}\Delta_{\varphi}(K+|\nabla\varphi|^2)\\
& &+e^{F-\lambda\varphi}\cdot\frac{(F_i-\lambda\varphi_i)(|\nabla\varphi|^2)_{\bar{i}}+(F_{\bar{i}}-\lambda\varphi_{\bar{i}})(|\nabla\varphi|^2)_{i}}{1+\varphi_{i\bar{i}}}. \nonumber
\end{eqnarray}

We can first calculate:
\begin{equation}
\begin{split}
\Delta_{\varphi}(e^{F-\lambda\varphi})&=e^{F-C\varphi}\frac{|F_i-\lambda\varphi_i|^2}{1+\varphi_{i\bar{i}}}+e^{F-\lambda\varphi}(\Delta_{\varphi}F-\frac{\lambda\varphi_{i\bar{i}}}{1+\varphi_{i\bar{i}}})\\
&=e^{F-\lambda\varphi}\frac{|F_i-\lambda\varphi_i|^2}{1+\varphi_{i\bar{i}}}+ e^{F-\lambda\varphi}(-\underline{R}-\lambda n+\frac{\lambda+R_{i\bar{i}}}{1+\varphi_{i\bar{i}}}).
\end{split}
\label{eq:volumeratio}
\end{equation}
By differentiating equation (\ref{csck1}) in $z_{\alpha}$ direction, we obtain 
\begin{equation}  \displaystyle \sum_{i}\frac{\varphi_{i\bar{i}\alpha}}{1+\varphi_{i\bar{i}}}=F_{\alpha} \;\;{\rm and}\;\;  \displaystyle \sum_{i}\frac{\varphi_{i\bar{i} \bar\alpha}}{1+\varphi_{i\bar{i}}}=F_{\bar \alpha}.  \label{eq: gradF}\end{equation}
  Then we calculate
\begin{equation}
\begin{split}
\Delta_{\varphi}(|\nabla\varphi|^2)&=\frac{R_{\alpha\bar{\beta}i\bar{i}}\varphi_{\alpha}\varphi_{\bar{\beta}}}{1+\varphi_{i\bar{i}}}+\frac{|\varphi_{i\alpha}|^2}{1+\varphi_{i\bar{i}}}+\frac{\varphi^2_{i\bar{i}}}{1+\varphi_{i\bar{i}}}+
\frac{\varphi_{\alpha}\varphi_{\bar{\alpha}i\bar{i}}+\varphi_{\bar{\alpha}}\varphi_{\alpha i\bar{i}}}{1+\varphi_{i\bar{i}}}\\
&\geq-\frac{C_{2.21}|\nabla\varphi|^2}{1+\varphi_{i\bar{i}}}+\frac{|\varphi_{i\alpha}|^2}{1+\varphi_{i\bar{i}}}+\frac{\varphi^2_{i\bar{i}}}{1+\varphi_{i\bar{i}}}+\varphi_{\alpha}F_{\bar{\alpha}}+\varphi_{\bar{\alpha}}F_{\alpha}\\
&=-\frac{C_{2.21}|\nabla\varphi|^2}{1+\varphi_{i\bar{i}}}+\frac{|\varphi_{i\alpha}|^2}{1+\varphi_{i\bar{i}}}+\frac{\varphi^2_{i\bar{i}}}{1+\varphi_{i\bar{i}}}+
\varphi_{\alpha}(F_{\bar{\alpha}}-\lambda\varphi_{\bar{\alpha}})+
\varphi_{\bar{\alpha}}(F_{\alpha}-\lambda\varphi_{\alpha})+2\lambda|\varphi_{\alpha}|^2\\
&\geq-\frac{C_{2.21}|\nabla\varphi|^2}{1+\varphi_{i\bar{i}}}+\frac{|\varphi_{i\alpha}|^2}{1+\varphi_{i\bar{i}}}+\frac{\varphi^2_{i\bar{i}}}{1+\varphi_{i\bar{i}}}-\eps(n+\Delta\varphi)-\frac{|F_{\alpha}-\lambda\varphi_{\alpha}|^2|\varphi_{\alpha}|^2}{\eps(1+\varphi_{\alpha\bar{\alpha}})}+2\lambda|\varphi_{\alpha}|^2.
\end{split}
\label{eq:firstderivative}
\end{equation}
Here $C_{2.21}$ depends only on lower bound of bisectional curvature of $g$.

For the last term in (\ref{start}), we estimate in the following way:
\begin{eqnarray}\label{crossterm1}
&& \frac{|(F_{\bar{i}}-\lambda\varphi_{\bar{i}})(|\nabla\varphi|^2)_i|}{1+\varphi_{i\bar{i}}} \\& = & \frac{|(F_{\bar{i}}-\lambda\varphi_{\bar{i}})(\varphi_{\alpha}\varphi_{\bar{\alpha}i}+\varphi_{\bar{\alpha}}\varphi_{\alpha i})|}{1+\varphi_{i\bar{i}}} \nonumber \\
&\leq & \frac{|(F_{\bar{i}}-\lambda\varphi_{\bar{i}})\varphi_i\varphi_{i\bar{i}}|}{1+\varphi_{i\bar{i}}}+\frac{|(F_{\bar{i}}-\lambda\varphi_{\bar{i}})\varphi_{\bar{\alpha}}\varphi_{\alpha i}|}{1+\varphi_{i\bar{i}}} \nonumber\\
&\leq &\frac{|F_i-\lambda\varphi_i|^2|\varphi_i|^2}{2\eps(1+\varphi_{i\bar{i}})}+\frac{|F_i-\lambda\varphi_i|^2|\varphi_{\alpha}|^2}{2\eps(1+\varphi_{i\bar{i}})}+\frac{\eps\varphi_{i\bar{i}}^2}{2(1+\varphi_{i\bar{i}})}+\frac{\eps|\varphi_{i\alpha}|^2}{2(1+\varphi_{i\bar{i}})}. \nonumber
\end{eqnarray}

The other conjugate term satisfies the same estimate as above. Combining above calculations, we obtain:

\begin{eqnarray}\label{60}
 && \frac{\Delta_{\varphi}(e^{F-\lambda\varphi}(K+|\nabla\varphi|^2))}{e^{F-\lambda\varphi}}\\
& \geq & \big(K+|\nabla\varphi|^2-3\eps^{-1}|\nabla\varphi|^2\big)\frac{|F_i-\lambda\varphi_i|^2}{1+\varphi_{i\bar{i}}} \nonumber \\
& & \qquad \qquad +\frac{(\lambda+R_{i\bar{i}})(K+|\nabla\varphi|^2)-C_{2.21}|\nabla\varphi|^2}{1+\varphi_{i\bar{i}}}+\frac{(1-\eps)\varphi_{i\bar{i}}^2}{1+\varphi_{i\bar{i}}}-\eps(n+\Delta\varphi) \nonumber \\
& & \qquad \qquad  \qquad \qquad+\frac{|\varphi_{i\alpha}|^2(1-\eps)}{1+\varphi_{i\bar{i}}}+(-\underline{R}-\lambda n)(K+|\nabla\varphi|^2). \nonumber
\end{eqnarray}
Now it's time to choose the constants $\eps$, $\lambda$, and $K$ appearing above.

First we choose $\eps=\frac{1}{4}$.
With this choice, we have 
\begin{equation}
\begin{split}
\sum_i\frac{(1-\eps)\varphi_{i\bar{i}}^2}{1+\varphi_{i\bar{i}}}-\eps(n+\Delta\varphi)&=\frac{3}{4}(n+\Delta\varphi)-\frac{3n}{2}+\frac{3}{4}\sum_i\frac{1}{1+\varphi_{i\bar{i}}}-\frac{1}{4}(1+\varphi_{i\bar{i}})\\
&\geq\frac{1}{2}(n+\Delta\varphi)-\frac{3n}{2}.
\end{split}
\end{equation}
Then we choose $\lambda$ so large that $\lambda+R_{i\bar{i}}>1$.
Finally, we choose $K$ so large that 
\begin{align}\label{K1}
&K>C_{2.21}\max_M|\nabla\varphi|^2\\
\label{K2}
&K>3\eps^{-1}\max_{M}|\nabla\varphi|^2=12\max_M|\nabla\varphi|^2.
\end{align}
With above choices  for $\lambda$ and $K$, we have
\begin{equation}
(\lambda+R_{i\bar{i}})(K+|\nabla\varphi|^2)-C_{2.21}|\nabla\varphi|^2\geq K-C_{2.21}|\nabla\varphi|^2>0.
\end{equation}
and also
\begin{equation}
K-3\eps^{-1}|\nabla\varphi|^2>0.
\end{equation}
Hence we conclude from (\ref{60}) that
\begin{equation}\label{nn}
\Delta_{\varphi}(e^{F-\lambda\varphi}(K+|\nabla\varphi|^2))\geq e^{F-\lambda\varphi}\big(-(|\underline{R}|+\lambda n)(K+\max_M|\nabla\varphi|^2)-C_{2.22}+\frac{1}{4}(n+\Delta\varphi)\big).
\end{equation}

Denote $v=e^{F-C\varphi}(K+|\nabla\varphi|^2)$, it is enough to show $v$ has an upper bound. we see from (\ref{nn}) that there exists constants $C_{2.23}>0$, $C_{2.24}>0$, possibly depending on $\displaystyle \max_M|\nabla\varphi|^2$, such that 
\begin{equation}\label{71}
\Delta_{\varphi}(v)\geq v(-C_{2.23}+\frac{1}{C_{2.24}}(n+\Delta\varphi)).
\end{equation}

Here we notice that $n+\Delta\varphi\geq n e^{\frac{F}{n}}$. Hence we obtain from (\ref{71}) that 
\begin{equation}
\Delta_{\varphi}(v)\geq v(-C_{2.23}+\frac{1}{C_{2.24}}e^{\frac{F}{n}}).
\end{equation}
Let the maximum of $v$ be achieved at point $p$, then we know $-C_{2.23}+\frac{e^{\frac{F}{n}(p)}}{C_{2.24}}\leq0$.
This gives an upper bound of $F$ at $p_0$, hence an upper bound for $v$, where this bound depends on $\displaystyle \max_M|\nabla\varphi|$.
\end{proof}

Conversely, we have the following key estimate, which will be needed when we do the $W^{2,p}$ estimates of $\varphi$.

\begin{thm}\label{t2.2} There exists a constant $C_{2.3}$, depending only on $||\varphi||_0$, lower bound of bisectional curvature and upper bound of Ricci form of $g$, such that $${{|\nabla\varphi|^2}\over e^{F}} \leq C_{2.3}.$$
\end{thm}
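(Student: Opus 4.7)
The plan is to bound $|\nabla\varphi|^2/e^F$ by a maximum-principle argument applied to the test function
\[
H = e^{-F - \lambda\varphi}\bigl(K + |\nabla\varphi|^2\bigr),
\]
where $\lambda, K > 0$ are constants depending only on $||\varphi||_0$, a lower bound for the bisectional curvature of $g$, and an upper bound for its Ricci form, chosen sufficiently large. Since $\varphi$ is uniformly bounded, a uniform upper bound on $\max_M H$ gives the desired estimate. The strategy mirrors the proof of Theorem \ref{t2.1}, but with the sign of the exponential reversed so that $\Delta_\varphi(-F - \lambda\varphi)$ produces the positive contribution $\sum_i\frac{\lambda - R_{i\bar i}}{1+\varphi_{i\bar i}}$; combined with the arithmetic-geometric mean inequality $\sum_i\frac{1}{1+\varphi_{i\bar i}} \geq n e^{-F/n}$, this yields the dominant positive term in the computation of $\Delta_\varphi H$.

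At a maximum point $p_0$ of $H$, I work in the normal coordinate (\ref{eq:normalcoord}) and compute $\Delta_\varphi H$ by the product rule, using $\Delta_\varphi F = -\underline R + tr_\varphi Ric_g$, $\Delta_\varphi\varphi = n - tr_\varphi g$, and the Bochner-type identity (\ref{eq:firstderivative}) for $\Delta_\varphi|\nabla\varphi|^2$. The gradient-vanishing $\nabla H(p_0) = 0$ reads $(|\nabla\varphi|^2)_i = (F_i + \lambda\varphi_i)(K+|\nabla\varphi|^2)$; substituting this into the mixed term of the product rule collapses it to the clean contribution $-(K+|\nabla\varphi|^2)\sum_i\frac{|F_i+\lambda\varphi_i|^2}{1+\varphi_{i\bar i}}$, while substituting for $F_i$ in the cross term $2\mathrm{Re}(\varphi_\alpha F_{\bar\alpha})$ coming from (\ref{eq:firstderivative}) eliminates the derivatives of $F$ entirely. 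Applying the Cauchy-Schwarz bound $|(|\nabla\varphi|^2)_i|^2 \leq 2|\nabla\varphi|^2\bigl(\sum_\alpha|\varphi_{i\alpha}|^2 + \varphi_{i\bar i}^2\bigr)$, these negative contributions are absorbed into the positive quadratic terms $\sum_i\frac{|\varphi_{i\alpha}|^2 + \varphi_{i\bar i}^2}{1+\varphi_{i\bar i}}$ of (\ref{eq:firstderivative}), with $\lambda$ chosen large enough that $\lambda - R_{i\bar i} - \kappa \geq 1$ pointwise (where $-\kappa$ is the lower bound of bisectional curvature) to dispose of the curvature term $-\kappa |\nabla\varphi|^2\, tr_\varphi g$.

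The main obstacle is the bookkeeping needed to preserve the multiplicative $(K+|\nabla\varphi|^2)$ prefactor on the dominant positive term through all absorption steps, so that the inequality $\Delta_\varphi H(p_0) \leq 0$ yields
\[
(K + |\nabla\varphi|^2(p_0))\, e^{-F(p_0)/n} \leq C
\]
rather than merely $e^{-F(p_0)/n} \leq C$ (which would only recover Proposition \ref{p2.1}). Granted this stronger inequality, raising to the $n$-th power gives $e^{-F(p_0)}\bigl(K + |\nabla\varphi|^2(p_0)\bigr)^n \leq C^n$, and hence
\[
e^{-F(p_0)}\bigl(K + |\nabla\varphi|^2(p_0)\bigr) \leq \frac{C^n}{(K+|\nabla\varphi|^2(p_0))^{n-1}} \leq \frac{C^n}{K^{n-1}}.
\]
Multiplying by $e^{-\lambda\varphi(p_0)} \leq e^{\lambda||\varphi||_0}$ bounds $H(p_0) = \max_M H$, which gives the theorem.
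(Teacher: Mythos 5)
Your test function $H = e^{-F-\lambda\varphi}(K+|\nabla\varphi|^2)$ omits the extra $\frac{1}{2}\varphi^2$ that the paper carries in the exponent, and that term is load-bearing, not decorative. The contribution of $(\frac{1}{2}\varphi^2)_{i\bar j}=\varphi_i\varphi_{\bar j}+\varphi\varphi_{i\bar j}$ to $\Delta_\varphi$ of the exponential produces the additional positive term $\sum_i\frac{|\varphi_i|^2}{1+\varphi_{i\bar i}}\cdot(K+|\nabla\varphi|^2)$, and the paper combines it with $|\nabla\varphi|^2\sum_i\frac{1}{1+\varphi_{i\bar i}}$ via Young's inequality to obtain the genuinely \emph{super-linear} good term
\begin{equation*}
\frac{|\varphi_i|^2|\nabla\varphi|^2}{1+\varphi_{i\bar i}} + |\nabla\varphi|^2\sum_i\frac{1}{1+\varphi_{i\bar i}} \;\geq\; \frac{1}{n-1}\big(e^{-F}|\nabla\varphi|^2\big)^{1+\frac{1}{n}}.
\end{equation*}
It is the exponent $1+\frac1n>1$ that closes the argument: setting $y=e^{-F}|\nabla\varphi|^2$, the maximum-point inequality $y^{1+1/n}\leq Cy+C'$ forces a bound on $y$. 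With your $H$, the strongest term available is the \emph{linear} good term $\lambda(K+|\nabla\varphi|^2)\sum_i\frac{1}{1+\varphi_{i\bar i}}\geq n\lambda(K+|\nabla\varphi|^2)e^{-F/n}$, but the bad terms you must subtract, such as $(\underline R-\lambda n)(K+|\nabla\varphi|^2)$, $-2\lambda|\nabla\varphi|^2$, and the cross-term errors, carry the \emph{same} $(K+|\nabla\varphi|^2)$ prefactor. At the maximum that common factor divides out, leaving only $e^{-F(p_0)/n}\leq C$, which is Proposition~\ref{p2.1} and not Theorem~\ref{t2.2}. The inequality you claim, $(K+|\nabla\varphi|^2(p_0))e^{-F(p_0)/n}\leq C$, is therefore not reachable this way, and the concluding algebra (raising to the $n$th power, etc.) rests on an unavailable premise.

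There is also a problem with the absorption step itself. After substituting $\nabla H(p_0)=0$, i.e. $F_i+\lambda\varphi_i=(|\nabla\varphi|^2)_i/(K+|\nabla\varphi|^2)$, the residual negative first-order term is $-\frac{1}{K+|\nabla\varphi|^2}\sum_i\frac{|(|\nabla\varphi|^2)_i|^2}{1+\varphi_{i\bar i}}$, and your Cauchy--Schwarz bound $|(|\nabla\varphi|^2)_i|^2\leq 2|\nabla\varphi|^2\big(\varphi_{i\bar i}^2+\sum_\alpha|\varphi_{i\alpha}|^2\big)$ gives it coefficient $\frac{2|\nabla\varphi|^2}{K+|\nabla\varphi|^2}$ against the positive Bochner terms. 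That coefficient is $\geq1$ whenever $|\nabla\varphi|^2(p_0)\geq K$, so absorption requires $K>|\nabla\varphi|^2(p_0)$, which is precisely the quantity you are trying to bound and hence circular. The paper never invokes $\nabla H(p_0)=0$ at all; it regroups pointwise into the complete square $\frac{1}{1+\varphi_{i\bar i}}\big|\varphi_{i\alpha}-(F_i+\lambda\varphi_i-\varphi\varphi_i)\varphi_\alpha\big|^2$ and exploits an exact cancellation between the cross term $2\mathrm{Re}(F_\alpha\varphi_{\bar\alpha})$ and the $\varphi_{\bar i}\varphi_{i\bar i}$ part of the product-rule term, so that the $|\nabla\varphi|^2$-weight of $|F_i+\lambda\varphi_i-\varphi\varphi_i|^2$ is consumed by the complete square and only the harmless $K$-weighted piece survives.
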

\begin{proof}
We will consider $\Delta_{\varphi}(e^{-(F+\lambda\varphi)+\frac{1}{2}\varphi^2}(|\nabla\varphi|^2+K))$.
Here $\lambda>0$, $K>0$ are constants to be determined below.
Then we have

\begin{eqnarray}\label{2.25}
&& \Delta_{\varphi}(e^{-(F+\lambda\varphi)+\frac{1}{2}\varphi^2}(|\nabla\varphi|^2+K))\\& = & \Delta_{\varphi}(e^{-(F+\lambda\varphi)+\frac{1}{2}\varphi^2})(|\nabla\varphi|^2+K) +e^{-(F+\lambda\varphi)+\frac{1}{2}\varphi^2}\Delta_{\varphi}(|\nabla\varphi|^2)\nonumber \\
& &\qquad \qquad+\frac{2e^{-(F+\lambda\varphi)+\frac{1}{2}\varphi^2}}{1+\varphi_{i\bar{i}}}Re\big((-F_i-\lambda\varphi_i+\varphi\varphi_i)(|\nabla\varphi|^2)_{\bar{i}}\big). \nonumber
\end{eqnarray}
For simplicity of notation, set $$A(F,\varphi)=-(F+\lambda\varphi)+\frac{1}{2}\varphi^2.$$
Similar as before, we may calculate:

\begin{eqnarray}\label{2.17}
&&\Delta_{\varphi}(e^{A(F,\varphi)}) \\&= & e^A\frac{|-F_i-\lambda\varphi_i+\varphi\varphi_i|^2}{1+\varphi_{i\bar{i}}}+e^A\big(-\Delta_{\varphi}(F+\lambda\varphi)+\varphi\Delta_{\varphi}\varphi\big)+e^A\frac{|\varphi_i|^2}{1+\varphi_{i\bar{i}}}\nonumber \\
&= & e^A\frac{|-F_i-\lambda\varphi_i+\varphi\varphi_i|^2}{1+\varphi_{i\bar{i}}}+e^A\bigg(\underline{R}-\lambda n+n\varphi+\sum_i\frac{\lambda-R_{i\bar{i}}-\varphi}{1+\varphi_{i\bar{i}}}\bigg)+\frac{e^A|\varphi_i|^2}{1+\varphi_{i\bar{i}}}.\nonumber
\end{eqnarray}
Recall the calculation in (\ref{eq:firstderivative}):
\begin{equation}\label{2.27}
\begin{split}
\Delta_{\varphi}(|\nabla\varphi|^2)&=\frac{R_{i\bar{i}\alpha\bar{\beta}}\varphi_{\alpha}
\varphi_{\bar{\beta}}}{1+\varphi_{i\bar{i}}}+\frac{|\varphi_{i\alpha}|^2}{1+\varphi_{i\bar{i}}}+\frac{\varphi_{i\bar{i}}^2}{1+\varphi_{i\bar{i}}}+
F_{\bar{\alpha}}\varphi_{\alpha}+
F_{\alpha}\varphi_{\bar{\alpha}}\\
&\geq-C_{2.21}|\nabla\varphi|^2\sum_i\frac{1}{1+\varphi_{i\bar{i}}}+\frac{|\varphi_{i\alpha}|^2}{1+\varphi_{i\bar{i}}}+\frac{\varphi_{i\bar{i}}^2}{1+\varphi_{i\bar{i}}}
\\
&+(-2\lambda+2\varphi)|\nabla\varphi|^2+2Re\big((F_{\alpha}+\lambda\varphi_{\alpha}-\varphi\varphi_{\alpha})\varphi_{\bar{\alpha}}\big)
.
\end{split}
\end{equation}
Again $C_{2.21}$ depends only on curvature bound of $g$.
Also
$$
(|\nabla\varphi|^2)_{\bar{i}}=\varphi_{\alpha}\varphi_{\bar{\alpha}\bar{i}}+
\varphi_{\bar{i}}\varphi_{i\bar{i}},\,\,(|\nabla\varphi|^2)_i=\varphi_{\bar{\alpha}}\varphi_{\alpha i}+\varphi_i\varphi_{i\bar{i}}.
$$
Hence if we plug in (\ref{2.17}) and (\ref{2.27}) back to (\ref{2.25}), we obtain:

\begin{equation}\label{2.19}
\begin{split}
&\quad \Delta_{\varphi}(e^A(|\nabla\varphi|^2+K))e^{-A}\\ & \geq   |\nabla_{\varphi}(F+\lambda\varphi)-\varphi\nabla_{\varphi}\varphi|^2(|\nabla\varphi|^2+K)+|\nabla_{\varphi}\varphi|^2(|\nabla\varphi|^2+K)\\
&+\big(\underline{R}-\lambda n+n\varphi+\sum_i\frac{\lambda-R_{i\bar{i}}-\varphi}{1+\varphi_{i\bar{i}}}\big)(|\nabla\varphi|^2+K)+\frac{-C_{2.21}|\nabla\varphi|^2+|\varphi_{i\alpha}|^2+\varphi_{i\bar{i}}^2}{1+\varphi_{i\bar{i}}}\\
& +(-2\lambda +2\varphi)|\nabla\varphi|^2+2Re\big((F_{\alpha}+\lambda\varphi_{\alpha}-\varphi\varphi_{\alpha})\varphi_{\bar{\alpha}}\big)\\
&\qquad\qquad\qquad\qquad\qquad+\frac{2Re\big((-F_i-\lambda\varphi_i+\varphi\varphi_i)(\varphi_{\alpha}\varphi_{\bar{\alpha}\bar{i}}+\varphi_{\bar{i}}\varphi_{i\bar{i}})\big)}{1+\varphi_{i\bar{i}}}. 
\end{split}
\end{equation}
We notice the following complete square in the above sum:
\begin{equation}\label{2.20}
\begin{split}
&\frac{1}{1+\varphi_{i\bar{i}}}|\varphi_{i\alpha}-\big(F_i+\lambda\varphi_i-\varphi\varphi_i\big)\varphi_{\alpha}|^2 \\
&=\frac{|\varphi_{i\alpha}|^2}{1+\varphi_{i\bar{i}}} +\frac{2Re\big((-F_i-\lambda\varphi_i+\varphi\varphi_i)\varphi_{\alpha}\varphi_{\bar{\alpha}\bar{i}}\big)}{1+\varphi_{i\bar{i}}}+\frac{|-F_i-\lambda\varphi_i+\varphi\varphi_i|^2|\nabla\varphi|^2}{1+\varphi_{i\bar{i}}}.
\end{split}
\end{equation}
We will drop this complete square in the following.
Next we observe a crucial cancellation, which is the key point of this argument. We look at the last two terms in (\ref{2.19}) and observe:
\begin{equation}\label{2.22}
(F_{\alpha}+\lambda\varphi_{\alpha}-\varphi\varphi_{\alpha})\varphi_{\bar{\alpha}}+\frac{(-F_i-\lambda\varphi_i+\varphi\varphi_i)\varphi_{\bar{i}}\varphi_{i\bar{i}}}{1+\varphi_{i\bar{i}}}=\frac{(F_i+\lambda\varphi_i-\varphi\varphi_i)\varphi_{\bar{i}}}{1+\varphi_{i\bar{i}}}.
\end{equation}
Hence we have
\begin{equation}\label{2.23new}
\begin{split}
&\Delta_{\varphi}(e^A(|\nabla\varphi|^2+K))e^{-A}\geq K\frac{|-F_i-\lambda\varphi_i+\varphi\varphi_i|^2}{1+\varphi_{i\bar{i}}}+\frac{|\varphi_i|^2(|\nabla\varphi|^2+K)}{1+\varphi_{i\bar{i}}}\\
& +\sum_i\frac{\lambda-R_{i\bar{i}}-\varphi}{1+\varphi_{i\bar{i}}}(|\nabla\varphi|^2+K)+\bigg(\underline{R}-\lambda n+n\varphi\bigg)(|\nabla\varphi|^2+K)\\
&  -C_{2.21}|\nabla\varphi|^2\sum_i\frac{1}{1+\varphi_{i\bar{i}}}+\frac{\varphi_{i\bar{i}}^2}{1+\varphi_{i\bar{i}}}+(-2\lambda+2\varphi)|\nabla\varphi|^2+2Re\bigg(\frac{(F_i+\lambda\varphi_i-\varphi\varphi_i)\varphi_{\bar{i}}}{1+\varphi_{i\bar{i}}}\bigg).
\end{split}
\end{equation}

Now we make the choices of $\lambda$, $K$.
We choose $\lambda=10(\sup_M|R_{i\bar{i}}|+||\varphi||_0+C_{2.21}+1)$ and $K=10$.
With this choice, we now estimate the terms in (\ref{2.23new}), with various constants $C_i$ which depends only on the curvature bound of $g$ and $||\varphi||_0$.
\begin{equation}
\big(\underline{R}-\lambda n+ n \varphi\big)(|\nabla\varphi|^2+K)\geq -C_{2.31}(|\nabla\varphi|^2+1).
\end{equation}
\begin{equation}
(-2\lambda+2\varphi)|\nabla\varphi|^2\geq -C_{2.32}|\nabla\varphi|^2.
\end{equation}
\begin{equation}
\begin{split}
\frac{|(F_i+\lambda\varphi_i-\varphi\varphi_i)\varphi_{\bar{i}}|}{1+\varphi_{i\bar{i}}}&\leq\frac{1}{2}\frac{|F_i+\lambda\varphi_i-\varphi\varphi_i|^2}{1+\varphi_{i\bar{i}}}+\frac{1}{2}\frac{|\varphi_i|^2}{1+\varphi_{i\bar{i}}}\\
&\leq\frac{1}{2}\frac{|F_i+\lambda\varphi_i-\varphi\varphi_i|^2}{1+\varphi_{i\bar{i}}}+\frac{1}{2}|\nabla\varphi|^2\sum_i\frac{1}{1+\varphi_{i\bar{i}}}.
\end{split}
\end{equation}
\begin{equation}
\sum_i\frac{\lambda-R_{i\bar{i}}-\varphi}{1+\varphi_{i\bar{i}}}(|\nabla\varphi|^2+K)-C_{2.21}|\nabla\varphi|^2\sum_i\frac{1}{1+\varphi_{i\bar{i}}}\geq10|\nabla\varphi|^2\sum_i\frac{1}{1+\varphi_{i\bar{i}}}.
\end{equation}
\begin{equation}
\frac{\varphi_{i\bar{i}}^2}{1+\varphi_{i\bar{i}}}\geq0.
\end{equation}
Combining all these estimates, we obtain from (\ref{2.23new}) that
\begin{equation}\label{2.35}
\Delta_{\varphi}(e^A(|\nabla\varphi|^2+K))\geq e^A\big(\frac{|\varphi_i|^2|\nabla\varphi|^2}{1+\varphi_{i\bar{i}}}+9|\nabla\varphi|^2\sum_i\frac{1}{1+\varphi_{i\bar{i}}}-C_{2.33}(|\nabla\varphi|^2+1)\big).
\end{equation}
Here $C_{2.33}$ depends only on curvature bound of $g$ and $||\varphi||_0$.  Using Young's inequality, we have,
\begin{equation*}
\begin{split}
|\nabla\varphi|^{\frac{2}{n}}e^{-\frac{F}{n}}&\leq\sum_i\frac{|\varphi_i|^{\frac{2}{n}}}{(1+\varphi_{i\bar{i}})^{\frac{1}{n}}}\cdot(1+\varphi_{i\bar{i}})^{\frac{1}{n}}e^{-\frac{F}{n}}\\
&\leq\frac{1}{n}\sum_i\frac{|\varphi_i|^2}{1+\varphi_{i\bar{i}}}+\frac{n-1}{n}\sum_i(1+\varphi_{i\bar{i}})^{\frac{1}{n-1}}e^{-\frac{F}{n-1}}\\
&\leq(n-1)\big(\frac{|\varphi_i|^2}{1+\varphi_{i\bar{i}}}+\frac{1}{n}\sum_i(1+\varphi_{i\bar{i}})^{\frac{1}{n-1}}e^{-\frac{F}{n-1}}\big)\\
&\leq (n-1)\big(\frac{|\varphi_i|^2}{1+\varphi_{i\bar{i}}}+(n+\Delta\varphi)^{\frac{1}{n-1}}e^{-\frac{F}{n-1}}\big)\\
&\leq (n-1)\big(\frac{|\varphi_i|^2}{1+\varphi_{i\bar{i}}}+\sum_i\frac{1}{1+\varphi_{i\bar{i}}}\big).
\end{split}
\end{equation*}
Thus,
\begin{equation*}
\begin{split}
\frac{|\varphi_i|^2|\nabla\varphi|^2}{1+\varphi_{i\bar{i}}}+|\nabla\varphi|^2\sum_i\frac{1}{1+\varphi_{i\bar{i}}}
&\geq\frac{1}{n-1}|\nabla\varphi|^{2+\frac{2}{n}}e^{-\frac{F}{n}}.
\end{split}
\end{equation*}

Hence we get from (\ref{2.35}) that
\begin{equation}
\begin{split}
\Delta_{\varphi}(e^A(|\nabla\varphi|^2+K))&\geq e^{-C\varphi+\frac{1}{2}\varphi^2}\big(e^{-(1+\frac{1}{n})F}|\nabla\varphi|^{2+\frac{2}{n}}-C_{2.33}e^{-F}|\nabla\varphi|^2-C_{2.33}e^{-F}\big)\\
&=e^{-C\varphi+\frac{1}{2}\varphi^2}\big((e^{-F}|\nabla\varphi|^2)^{1+\frac{1}{n}}-C_{2.33}e^{-F}|\nabla\varphi|^2-C_{2.33}e^{-F}\big).
\end{split}
\end{equation}

Suppose that the function $e^{-(F+C\varphi)+\frac{1}{2}\varphi^2}(|\nabla\varphi|^2+K)$ achieves maximum at $p$.
Then at point $p$, we have
\begin{equation}\label{2.37}
0\geq(e^{-F}|\nabla\varphi|^2)^{1+\frac{1}{n}}-C_{2.33}e^{-F}|\nabla\varphi|^2-C_{2.33}e^{-F}.
\end{equation}
Recall Proposition \ref{p2.1} gives an estimate for $e^{-F}$ which depends only on $||\varphi||_0$ and the curvature bound of $g$. Therefore, we get a bound for $e^{-F}|\nabla\varphi|^2(p)$ with the same dependence.
Hence we have a bound for $e^{-(F+\lambda\varphi)+\frac{1}{2}\varphi^2}(|\nabla\varphi|^2+K)(p)$, with the dependence as stated in the theorem.

But this function achieves maximum at $p$, so we are done.
\end{proof}

\section{The volume ratio $\frac{\omega_{\varphi}^n}{\omega_0^n}$ and $W^{2,p}$ bound on K\"ahler potential }

In this section, we prove

\begin{thm}\label{t4.1} For any $p>0$, there exist constants $\alpha(p)>0$, $C(p)>0$, so that
\begin{equation}\label{W2p}
\int_Me^{-\alpha(p)F}(n+\Delta\varphi)^pdvol_g\leq C(p).
\end{equation}
Here $\alpha(p)$ depends only on $p$(can be explicitly calculated). The constant $C_p$ depends only on $p$, $||\varphi||_0$, the upper bound of Ricci form, lower bound of the bisectional curvature, and volume of $g$.
\end{thm}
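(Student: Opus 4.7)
The plan is to integrate the pointwise Aubin--Yau inequality against a weighted test function and bootstrap via iteration, following the integral approach of \cite{chenhe12} but with the extra equation (\ref{csck2}) playing the role their argument relegated to the hypothesis $|\nabla F|\in L^p$. The starting point is the standard pointwise inequality, derived in a frame diagonalizing $g_\varphi$ by applying the Ricci identity to $\varphi_{i\bar i\alpha\bar\alpha}$ and Cauchy--Schwarz on the third-order terms:
$$\Delta_\varphi \log(n+\Delta\varphi) \geq \frac{\Delta F}{n+\Delta\varphi} - B\cdot tr_\varphi g,$$
where $B$ depends only on the lower bound of the bisectional curvature of $g$. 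The obstacle is $\Delta F$, which is not controlled pointwise; only $\Delta_\varphi F \leq C\cdot tr_\varphi g + C$ is known via (\ref{csck2}).

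Writing $u:=n+\Delta\varphi$, I would fix $p\geq 1$ and a weight $\alpha>0$ to be chosen, multiply the inequality by $u^p e^{-\alpha F}$, and integrate over $M$ against $\omega^n = e^{-F}\omega_\varphi^n$. Integration by parts on the $\Delta_\varphi\log u$ side (against $\omega_\varphi^n$) yields the good term $-p\int u^{p-2}|\nabla_\varphi u|^2_\varphi e^{-\alpha F}\omega^n$ plus a cross term $(\alpha+1)\int u^{p-1}\langle\nabla_\varphi u,\nabla_\varphi F\rangle_\varphi e^{-\alpha F}\omega^n$; integration by parts on the $\Delta F$ side (against $\omega^n$) produces the crucial good term $\alpha\int|\nabla F|^2 u^{p-1}e^{-\alpha F}\omega^n$ plus a cross term $-(p-1)\int u^{p-2}\langle \nabla F,\nabla u\rangle e^{-\alpha F}\omega^n$. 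Both cross terms will be absorbed by Young's inequality using the pointwise comparison $|\nabla h|^2_g \leq u\cdot|\nabla_\varphi h|^2_\varphi$, which follows from $\max \lambda_i\leq\sum\lambda_i = u$ in the diagonalizing frame.

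The remaining right-hand side contribution $B\int u^p\cdot tr_\varphi g\cdot e^{-\alpha F}\omega^n$ is controlled by the reverse AM--GM estimate $tr_\varphi g = \sum (1+\varphi_{i\bar i})^{-1}\leq C u^{n-1}/\det g_\varphi = C u^{n-1}e^{-F}$, turning it into $C\int u^{p+n-1}e^{-(\alpha+1)F}\omega^n$ -- higher power of $u$, heavier weight. To close the loop I would use the second cscK equation (\ref{csck2}): a further integration by parts converts $\int|\nabla F|^2\cdot(\ldots)\omega_\varphi^n$ into $\int F\cdot tr_\varphi \mathrm{Ric}_g\cdot(\ldots)\omega_\varphi^n$, and the $F$-factor is then absorbed into the exponential weight. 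Theorem \ref{t2.2} enters whenever $\nabla\varphi$ appears through $\nabla F = \nabla\log\det g_\varphi - \nabla\log\det g$, giving $|\nabla\varphi|^2\leq Ce^F$.

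The resulting inequality has the schematic form $\int u^p e^{-\alpha F}\omega^n \leq C + C\int u^{p'} e^{-\alpha'F}\omega^n$ with $p'<p+n-1$ and $\alpha'>\alpha$; starting from the trivial $\int u\,\omega^n = n\,\mathrm{Vol}(M,\omega)$ and iterating finitely many times, we reach any prescribed $p<\infty$. The main obstacle is the bookkeeping of exponents: $\alpha(p)$ must grow with $p$ fast enough that the Young's-inequality constants from the cross-term absorptions are controllable, yet the successive iterations must not explode the weight beyond integrability. It is precisely the second cscK equation (\ref{csck2}), unavailable in the pure Monge--Amp\`ere setting of \cite{chenhe12}, that allows this balance to be struck without any pointwise bound on $F$ beyond what is provided by Proposition \ref{p2.1}.
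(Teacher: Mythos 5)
Your plan shares the paper's skeleton: start from a weighted second-order inequality, multiply by a power of $u = n + \Delta\varphi$, integrate by parts on both the $\Delta_\varphi$ side and the $\Delta F$ side, invoke Theorem~\ref{t2.2} to convert stray $|\nabla\varphi|^2$ factors into $Ce^F$, and iterate to gain exponent. However, the engine that makes the iteration gain exponent is missing. Your exponential weight is $e^{-\alpha F}$ alone, while the paper's weight is $e^{-\alpha(F+\lambda\varphi)}$. The $\lambda\varphi$ factor is the decisive ingredient: when one expands $\Delta_\varphi\bigl(e^{-\alpha(F+\lambda\varphi)}(n+\Delta\varphi)\bigr)$, the Laplacian of the weight produces the term $\alpha\sum_i\frac{\lambda-R_{i\bar i}}{1+\varphi_{i\bar i}}\geq \frac{\lambda\alpha}{2}\,tr_\varphi g$ once $\lambda>2\sup|Ric|$. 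Multiplied by $(n+\Delta\varphi)$ and combined (for $\lambda\alpha$ large) with the negative $-C_{3.1}(n+\Delta\varphi)\,tr_\varphi g$ from the Aubin--Yau computation, this yields a net \emph{positive} contribution $(\frac{\lambda\alpha}{2}-C_{3.1})(n+\Delta\varphi)\,tr_\varphi g\geq c\,e^{-F/(n-1)}(n+\Delta\varphi)^{n/(n-1)}$ on the good side of the inequality. This is the higher-power-of-$u$ good term that lets the paper bound $\int (n+\Delta\varphi)^{2p+1+\frac{1}{n-1}}$ in terms of $\int(n+\Delta\varphi)^{2p+1}$ and $\int(n+\Delta\varphi)^{2p}$ and then iterate upward from $p=0$.

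In your version, the only $tr_\varphi g$ term is the bad one with coefficient $-B$; after the reverse AM--GM bound $tr_\varphi g\leq Cu^{n-1}e^{-F}$ it sits on the wrong side as $C\int u^{p+n-1}e^{-(\alpha+1)F}\omega^n$, at strictly higher power of $u$ than your good terms $\int u^{p-2}|\nabla_\varphi u|^2_\varphi$ and $\alpha\int u^{p-1}|\nabla F|^2$. Your schematic inequality is therefore of the form $\int u^p(\cdot)\leq C+C\int u^{p'}(\cdot)$ with $p'>p$, which bounds the low power by the high power --- the wrong direction --- and the iteration from $\int u\,\omega^n=n\,\mathrm{Vol}$ never gets started. (Trading the higher $u$-power for a heavier $e^{-F}$ weight does not help either, since that is exactly the pointwise bound $u\lesssim e^{F}$ that is only proved later, in Theorem~\ref{t3.1}.) The fix is to replace the weight $e^{-\alpha F}$ by $e^{-\alpha(F+\lambda\varphi)}$ with $\lambda$ depending on $\sup|Ric|$ and then $\lambda\alpha$ chosen large; after that the remaining pieces of your argument --- the pointwise comparison $|\nabla h|^2\leq u|\nabla_\varphi h|^2_\varphi$, the integration by parts on $\Delta F$, the use of $|\nabla\varphi|^2\leq Ce^F$ --- line up with the paper's proof.
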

\begin{proof}
One start by calculating:
\begin{equation}\label{2.88}
\begin{split}
\Delta_{\varphi}(e^{-\alpha(F+\lambda\varphi)}(n+&\Delta\varphi))=\Delta_{\varphi}(e^{-\alpha(F+\lambda\varphi)})(n+\Delta\varphi)+e^{-\alpha(F+\lambda\varphi)}\Delta_{\varphi}(n+\Delta\varphi)\\
&+e^{-\alpha(F+\lambda\varphi)}(-\alpha)\frac{(F_i+\lambda\varphi_i)(\Delta\varphi)_{\bar{i}}+(F_{\bar{i}}+\lambda\varphi_{\bar{i}})(\Delta\varphi)_i}{1+\varphi_{i\bar{i}}}.
\end{split}
\end{equation}
If we choose $\lambda>2\sup Ric$, then
\begin{equation}
\begin{split}
\Delta_{\varphi}&(e^{-\alpha(F+\lambda\varphi)})=\frac{\alpha^2|F_i+\lambda\varphi_i|^2}{1+\varphi_{i\bar{i}}}e^{-\alpha(F+\lambda\varphi)}+\alpha e^{-\alpha(F+\lambda\varphi)}(\underline{R}-\lambda n+\sum_i\frac{\lambda-R_{i\bar{i}}}{1+\varphi_{i\bar{i}}})\\
&\geq \frac{\alpha^2|F_i+\lambda\varphi_i|^2}{1+\varphi_{i\bar{i}}}e^{-\alpha(F+\lambda\varphi)}
+\alpha e^{-\alpha(F+\lambda\varphi)}(\underline{R}-\lambda n)+\frac{\lambda\alpha}{2}e^{-\alpha(F+\lambda\varphi)}\sum_i\frac{1}{1+\varphi_{i\bar{i}}}.
\end{split}
\end{equation}
For the term $\Delta_{\varphi}(n+\Delta\varphi)$, we choose a normal coordinate (c.f. equation (\ref{eq:normalcoord})) and then follow Yau's calculation\cite{Yau78}. First, note that
\begin{equation}
\Delta_{\varphi}(n+\Delta\varphi)=\frac{1}{1+\varphi_{k\bar{k}}}
\bigg(g^{i\bar{j}}\varphi_{i\bar{j}}\bigg)_{k\bar{k}}=\frac{R_{i\bar{i}k\bar{k}}\varphi_{i\bar{i}}}{1+\varphi_{k\bar{k}}}+\frac{\varphi_{k\bar{k}i\bar{i}}}{1+\varphi_{k\bar{k}}}.
\end{equation}
We wish to represent the $4$-th derivative of $\varphi$ in terms of $F$. For this we take equation (\ref{csck1}) and differentiate it twice in $z_i$, $z_{\bar{i}}$ and then sum over $i =1,2 \cdots n.\;$  We obtain:
\begin{equation}\label{1.32new}
\frac{\varphi_{k\bar{k}i\bar{i}}}{1+\varphi_{k\bar{k}}}-\frac{R_{k\bar{k}i\bar{i}}}{1+\varphi_{k\bar{k}}}-\frac{|\varphi_{k\bar{\beta}i}|^2}{(1+\varphi_{k\bar{k}})(1+\varphi_{\beta\bar{\beta}})}=F_{i\bar{i}}-R_{i\bar{i}}.
\end{equation}
Hence
\begin{equation}\label{3.6new}
\begin{split}
\Delta_{\varphi}(n&+\Delta\varphi)=\frac{R_{k\bar{k}i\bar{i}}(1+\varphi_{k\bar{k}})}{1+\varphi_{i\bar{i}}}+\frac{|\varphi_{p\bar{q}i}|^2}{(1+\varphi_{p\bar{p}})(1+\varphi_{q\bar{q}})}+\Delta F-R\\
&\geq-C_{3.1}(n+\Delta\varphi)\sum_i\frac{1}{1+\varphi_{i\bar{i}}}+\frac{|\varphi_{p\bar{q}i}|^2}{(1+\varphi_{p\bar{p}})(1+\varphi_{q\bar{q}})}+\Delta F-R.
\end{split}
\end{equation}
Here $C_{3.1}$ depends only on curvature bound of $g$ and $R$ is the scalar curvature of the background metric $g$. Plug in to equation (\ref{2.88}) and  we get
\begin{equation}\label{3.7}
\begin{split}
\Delta_{\varphi}&(e^{-\alpha(F+\lambda\varphi)}(n+\Delta\varphi))\geq e^{-\alpha(F+\lambda\varphi)}(\frac{\lambda\alpha}{2}-C_{3.1})(n+\Delta\varphi)\sum_i\frac{1}{1+\varphi_{i\bar{i}}}\\
&+\alpha e^{-\alpha(F+\lambda\varphi)}(\underline{R}-\lambda n)(n+\Delta\varphi)+e^{-\alpha(F+\lambda\varphi)}(\Delta F-R).
\end{split}
\end{equation}
Here we already drop the term:
\begin{equation*}
\begin{split}
& \frac{\alpha^2|F_i+\lambda\varphi_i|^2}{1+\varphi_{i\bar{i}}} (n+\Delta \varphi)   + (-\alpha)\frac{(F_i+\lambda\varphi_i)(\Delta\varphi)_{\bar{i}}+(F_{\bar{i}}+\lambda\varphi_{\bar{i}})(\Delta\varphi)_i}{1+\varphi_{i\bar{i}}} + \frac{|\varphi_{p\bar{q}i}|^2}{(1+\varphi_{p\bar{p}})(1+\varphi_{q\bar{q}})}\\
 \quad\geq  &\frac{\alpha^2|F_i+\lambda\varphi_i|^2}{1+\varphi_{i\bar{i}}}(n+\Delta\varphi)-2\alpha Re\bigg(\frac{(F_i+\lambda\varphi_i)(\Delta\varphi)_{\bar{i}}}{1+\varphi_{i\bar{i}}}\bigg)+\frac{|(\Delta\varphi)_i|^2}{(n+\Delta\varphi)(1+\varphi_{i\bar{i}})}\\
= & \frac{n+\Delta\varphi}{1+\varphi_{i\bar{i}}}|\alpha(F_i+\lambda\varphi_i)-\frac{(\Delta\varphi)_i}{n+\Delta\varphi}|^2\geq0.
 \end{split}
\end{equation*}
From the first line to second line in the above, we observed that
$$
\frac{|(\Delta\varphi)_i|^2}{1+\varphi_{i\bar{i}}}  = \frac{|\sum_p\varphi_{p\bar{p}i}|^2}{1+\varphi_{i\bar{i}}}
 \leq  \frac{|\varphi_{p\bar{p}i}|^2(n+\Delta\varphi)}{(1+\varphi_{i\bar{i}})(1+\varphi_{p\bar{p}})}\\  \leq  \frac{|\varphi_{p\bar{q}i}|^2(n+\Delta\varphi)}{(1+\varphi_{p\bar{p}})(1+\varphi_{q\bar{q}})}.
 $$
Set $$ u=e^{-\alpha(F+\lambda\varphi)}(n+\Delta\varphi) $$  
and note that 
$$
(n+\Delta\varphi)\sum_i\frac{1}{1+\varphi_{i\bar{i}}}\geq e^{-\frac{F}{n-1}}(n+\Delta\varphi)^{1+\frac{1}{n-1}},
$$
 then we know from equation (\ref{3.7}):
\begin{equation} \label{3.8}
\begin{split}
\Delta_{\varphi}&u\geq e^{-(\alpha+\frac{1}{n-1})F-\alpha \lambda\varphi}(\frac{\lambda\alpha}{2}-C_{3.1})(n+\Delta\varphi)^{1+\frac{1}{n-1}}-\alpha e^{-\alpha(F+\lambda\varphi)}(\lambda n-\underline{R})(n+\Delta\varphi)\\
&\qquad \qquad +e^{-\alpha(F+\lambda\varphi)}(\Delta F-R).
\end{split}
\end{equation}
We use the following equality, which holds for any $p\geq0$:
$$
\begin{array}{lcl}
{1\over {2p+1}} \Delta_{\varphi}(u^{2p+1}) & = & 2pu^{2p-1}|\nabla_{\varphi}u|_{\varphi}^2+ u^{2p}\Delta_{\varphi}u \\
& =& 2p u^{2p-2}e^{-\alpha(F+\lambda\varphi)}(n+\Delta\varphi)|\nabla_{\varphi}u|_{\varphi}^2 + u^{2p}\Delta_{\varphi}u \\
& \geq & 2p u^{2p-2}|\nabla u|^2e^{-\alpha(F+\lambda\varphi)} + u^{2p}\Delta_{\varphi}u.
\end{array}
$$
Integrate with respect to $dvol_{\varphi}=e^Fdvol_g$ and plug inequality (\ref{3.8}) to get:
\begin{equation}\label{2.93}
\begin{split}
&\int_M2pu^{2p-2}|\nabla u|^2e^{(1-\alpha)F-\alpha \lambda\varphi}  dvol_g\\
&\qquad\qquad\qquad\qquad+\int_Me^{-(\alpha-\frac{n-2}{n-1})F-\alpha \lambda\varphi}(\frac{\lambda\alpha}{2}-C_{3.1})(n+\Delta\varphi)^{1+\frac{1}{n-1}}u^{2p}dvol_g\\
&\qquad \qquad +\int_Me^{(1-\alpha)F-\alpha \lambda\varphi}u^{2p}\Delta Fdvol_g\leq \int_M\alpha e^{(1-\alpha)F-\alpha \lambda\varphi}(\lambda n-\underline{R})(n+\Delta\varphi)u^{2p}dvol_g\\
&\qquad \qquad \qquad \qquad \qquad \qquad +\int_Me^{(1-\alpha)F-\lambda\alpha\varphi}Ru^{2p}dvol_g.
\end{split}
\end{equation}
We need to handle the term involving $\Delta F$, which is done by integrating by parts.
\begin{equation}\label{2.94}
\begin{split}
&\int_Me^{(1-\alpha)F-\alpha \lambda\varphi}u^{2p}\Delta Fdvol_g=\int_M(\alpha-1)e^{(1-\alpha)F-\alpha \lambda\varphi}u^{2p}|\nabla F|^2dvol_g\\
&+\int_M\alpha \lambda e^{(1-\alpha)F-\lambda\alpha\varphi}u^{2p}\nabla\varphi\cdot\nabla Fdvol_g-\int_M2pe^{(1-\alpha)F-\lambda\alpha\varphi}u^{2p-1}\nabla u\cdot\nabla Fdvol_g.
\end{split}
\end{equation}
Also we can estimate the last term of (\ref{2.94})
\begin{equation}\label{2.96}
u^{2p-1}\nabla u\cdot\nabla F\leq \frac{1}{2}u^{2p-2}|\nabla u|^2+\frac{1}{2}u^{2p}|\nabla F|^2.
\end{equation}
Then we estimate the second to last term of (\ref{2.94}) and obtain:
\begin{equation}\label{2.97}
\begin{split}
\alpha &\lambda e^{(1-\alpha)F-\lambda\alpha\varphi}u^{2p}\nabla\varphi\cdot\nabla F\\
\leq & \frac{\alpha-1}{2}e^{(1-\alpha)F-\lambda\alpha\varphi}u^{2p}|\nabla F|^2 +\frac{\alpha^2\lambda^2}{2(\alpha-1)}u^{2p}|\nabla\varphi|^2e^{(1-\alpha)F-\lambda\alpha\varphi}\\
\leq & \frac{\alpha-1}{2}e^{(1-\alpha)F-\lambda\alpha\varphi}u^{2p}|\nabla F|^2+C_{2.3}\frac{\alpha^2\lambda^2}{2(\alpha-1)}u^{2p}e^{(2-\alpha)F-\lambda\alpha\varphi}.
\end{split}
\end{equation}
When estimating $|\nabla\varphi|^2$ above, we used Theorem \ref{t2.2}, and $C_{2.3}$ is the constant given by that theorem. Plug (\ref{2.96}), (\ref{2.97}) back into (\ref{2.94}), we obtain
\begin{equation}\label{2.98}
\begin{split}
&\int_Me^{(1-\alpha)F-\alpha \lambda\varphi}u^{2p}\Delta Fdvol_g\geq\int_M(\frac{\alpha-1}{2}-p)e^{(1-\alpha)F-\alpha \lambda\varphi}u^{2p}|\nabla F|^2dvol_g\\
&-\int_MC_{2.3}\frac{\alpha^2\lambda^2}{2(\alpha-1)}e^{(2-\alpha)F-\lambda\alpha\varphi}u^{2p}dvol_g-\int_Mpe^{(1-\alpha)F-\lambda\alpha\varphi}u^{2p-2}|\nabla u|^2dvol_g.
\end{split}
\end{equation}
Plug (\ref{2.98}) back to (\ref{2.93}), we see
\begin{equation}
\begin{split}
& \int_Mpe^{(1-\alpha)F-\lambda\alpha\varphi}u^{2p-2}|\nabla u|^2dvol_g + \int_M(\frac{\alpha-1}{2}-p)e^{(1-\alpha)F-\alpha \lambda\varphi}u^{2p}|\nabla F|^2dvol_g\\
&\qquad \qquad +\int_Me^{-(\alpha-\frac{n-2}{n-1})F-\alpha \lambda\varphi}(\frac{\lambda\alpha}{2}-C_{3.1})(n+\Delta\varphi)^{1+\frac{1}{n-1}}u^{2p}dvol_g\\
&\leq \int_M\alpha e^{(1-\alpha)F-\alpha \lambda\varphi}(\lambda n-\underline{R})(n+\Delta\varphi)u^{2p}dvol_g+C_{2.3}\frac{\alpha^2\lambda^2}{2(\alpha-1)}\int_Me^{(2-\alpha)F-\lambda\alpha\varphi}u^{2p}dvol_g\\
&\qquad \qquad+\int_Me^{(1-\alpha)F-\lambda\alpha\varphi}Ru^{2p}dvol_g.
\end{split}
\end{equation}
Now let $\alpha>2p+1$ and $\lambda \alpha \geq 2C_{3.1}+1$, note that $n+\Delta \varphi $ has positive lower bound, then we find from above:
\begin{equation}
\begin{split}
&\int_Me^{-(\alpha-\frac{n-2}{n-1})F-\alpha \lambda\varphi}(n+\Delta\varphi)^{1+\frac{1}{n-1}}u^{2p}dvol_g \\
& \qquad \leq C_{3.2}\alpha\int_Me^{(1-\alpha)F-\alpha \lambda\varphi}(n+\Delta\varphi)u^{2p}dvol_g +C_{3.2}\frac{\alpha^2}{\alpha-1}\int_Me^{(2-\alpha)F-\lambda\alpha\varphi}u^{2p}dvol_g.
\end{split}
\end{equation}
Recall the definition of $u$, this means for any $p\geq0$, $\alpha\geq 2p+2$:
\begin{equation}
\begin{split}
&\int_M\exp(-(2p+1)\alpha F+\frac{n-2}{n-1}F-\lambda\alpha(2p+1)\varphi)(n+\Delta\varphi)^{2p+1+\frac{1}{n-1}}dvol_g\\
&\leq C_{3.2}\alpha\int_M\exp(-(2p+1)\alpha F+F-(2p+1)\alpha \lambda\varphi)(n+\Delta\varphi)^{2p+1}dvol_g\\
&+C_{3.2}\frac{\alpha^2}{\alpha-1}\int_M\exp(-(2p+1)\alpha F+2F-(2p+1)\lambda\varphi)(n+\Delta\varphi)^{2p}dvol_g.
\end{split}
\end{equation}
Hence for some constant $C_{3.3}$ which depends on $||\varphi||_0$, $\alpha$, and $p$, we get:
\begin{equation}\label{2.102}
\begin{split}
&\int_M\exp(-(2p+1)\alpha F+\frac{n-2}{n-1}F)(n+\Delta\varphi)^{2p+1+\frac{1}{n-1}}dvol_g\\
&\leq C_{3.3}\bigg(\int_M\exp(-(2p+1)\alpha F+F)(n+\Delta\varphi)^{2p+1}dvol_g\\
&+\int_M\exp(-(2p+1)\alpha F+2F)(n+\Delta\varphi)^{2p}dvol_g\bigg).
\end{split}
\end{equation}
Start from $p=0$, and take $\alpha=2$, one obtains from (\ref{2.102}) that:
\begin{equation}
\begin{split}
\int_Me^{-\frac{n}{n-1}F}&(n+\Delta\varphi)^{\frac{n}{n-1}}dvol_g\leq C_{3.3}\big(\int_Me^{-F}(n+\Delta\varphi)dvol_g+\int_M dvol_g\big)\\
&\leq C_{3.3}\big(n||e^{-F}||_0 vol(M)+vol(M)\big).
\end{split}
\end{equation}
 Since we obtained in Proposition \ref{p2.1} a bound for $e^{-F}$ depending only on $||\varphi||_0$ and curvature bound of $g$. Hence we get a bound for $\int_Me^{-\frac{n}{n-1}F}(n+\Delta\varphi)^{\frac{n}{n-1}}dvol_g$.

We now claim that there exists a sequence of pair of positive numbers $(p_k, \gamma_k)$ where $p_k\rightarrow \infty$ such that 
\[
\displaystyle \int_M\; e^{-\gamma_k F} (n+\Delta \varphi)^{2 p_k + 1} \;d vol_g < \infty
\] 
for all $k=1,2\cdots .$  Now we explain how we choose this sequence of pairs of positive numbers successively:  In general, suppose we already choose $(p_k,\gamma_k)$ such that the preceding
inequality holds. Choose $\alpha_{k+1}$ sufficiently large such that $$  \alpha_{k+1}\geq 2p_k+2, \qquad {\rm and}\qquad -(2p_k+1)\alpha_{k+1}+2\leq -\gamma_k. $$  Set $\alpha=\alpha_{k+1}$, $p=p_k$ in  (\ref{2.102}), we obtain 
\begin{equation}
\begin{split}
&\int_M\exp(-(2p_k+1)\alpha_{k+1}F+\frac{n-2}{n-1}F)(n+\Delta\varphi)^{2p_k+1+\frac{1}{n-1}}dvol_g\\
&\leq C_{3.31}\bigg(\int_Me^{-\gamma_k F}(n+\Delta\varphi)^{2p_k+1}dvol_g+\int_Me^{-\gamma_kF}(n+\Delta\varphi)^{2p_k}dvol_g\bigg)\\
&\leq C_{3.32}\int_Me^{-\gamma_kF}(n+\Delta\varphi)^{2p_k+1}dvol_g.
\end{split}
\end{equation}
In the second inequality, we used again the fact that $e^{-F}$ is bounded in terms of $||\varphi||_0$ and $g$.
In the last inequality above, we noticed the fact that $n+\Delta\varphi\geq e^{\frac{F}{n}}$, and $e^F$ is bounded from below.
Set  \[ \gamma_{k+1}=(2p_k+1)\alpha_{k+1}-\frac{n-2}{n-1} \qquad {\rm and}\qquad p_{k+1}=p_k+\frac{1}{2(n-1)}.\]
 Then
 \[
 \int_Me^{-\gamma_{k+1}F}(n+\Delta\varphi)^{2p_{k+1}+1}dvol_g \leq C \int_Me^{-\gamma_kF}(n+\Delta\varphi)^{2p_k+1}.\]
 where the constant depends on  $||\varphi||_0$ and the background metric $g$. Our claim is then verified.

By induction, we then get a bound for $\int_Me^{-\gamma_pF}(n+\Delta\varphi)^pdvol_g$ for any $p>0$ and some constant $\gamma_p>0$. Here $\gamma_p$ grows like $p^2$ as $p\rightarrow\infty$.
\end{proof}
\begin{rem}
By a more careful inspection of above argument, one sees that it is possible to choose $\gamma_p=\max( (p+1)(p+2), n p) $, but this is probably not sharp.
\end{rem}
As an immediate consequence, we have the following $W^{2,p}$ estimate of $\varphi$ in terms of $||F||_0$.
\begin{cor}\label{c3.2}
For any $1<p<\infty$, there exist constants $\tilde{C}(p)>0$, depending on $||\varphi||_0$, $||F||_0$, the background metric $g$(with dependence in a way described in Theorem \ref{t4.1}) and $p$, such that $||n+\Delta\varphi||_{L^p}\leq \tilde{C}(p)$.
\end{cor}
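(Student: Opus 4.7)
The plan is to derive this corollary as an immediate weighted-unweighted conversion of Theorem \ref{t4.1}. Since that theorem already gives, for every exponent $p\in(1,\infty)$, a bound
\[
\int_M e^{-\alpha(p)F}(n+\Delta\varphi)^p\,dvol_g\leq C(p),
\]
with $\alpha(p)>0$ depending only on $p$ and $C(p)$ depending on $\|\varphi\|_0$, $p$, and $g$, the only task is to absorb the weight $e^{-\alpha(p)F}$ using the additional hypothesis $\|F\|_0<\infty$.

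First I would fix $p\in(1,\infty)$ and let $\alpha=\alpha(p)>0$ be the exponent produced by Theorem \ref{t4.1}. Since $F$ is assumed to be bounded in $L^\infty$, we have the pointwise inequality
\[
e^{-\alpha F}\geq e^{-\alpha\|F\|_0}>0 \quad \text{on } M.
\]
Multiplying $(n+\Delta\varphi)^p\geq 0$ by this lower bound and integrating gives
\[
e^{-\alpha\|F\|_0}\int_M (n+\Delta\varphi)^p\,dvol_g\leq \int_M e^{-\alpha F}(n+\Delta\varphi)^p\,dvol_g\leq C(p).
\]
Rearranging and taking the $p$-th root yields
\[
\|n+\Delta\varphi\|_{L^p(dvol_g)}\leq \bigl(e^{\alpha(p)\|F\|_0}C(p)\bigr)^{1/p}=:\tilde{C}(p),
\]
which is the claimed bound, with the stated dependence on $\|\varphi\|_0$, $\|F\|_0$, $g$, and $p$.

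There is essentially no obstacle here: the content of the corollary lives entirely in Theorem \ref{t4.1}, and the passage from the weighted to the unweighted $L^p$ norm is a one-line application of the pointwise bound $e^{-\alpha F}\geq e^{-\alpha\|F\|_0}$. The only point worth emphasizing is that $\alpha(p)$ depends only on $p$ (and in particular is independent of $\|F\|_0$), so the dependence of $\tilde{C}(p)$ on $\|F\|_0$ enters solely through the explicit factor $e^{\alpha(p)\|F\|_0/p}$.
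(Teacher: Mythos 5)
Your proposal is correct and is exactly the intended argument: the paper states Corollary \ref{c3.2} as an ``immediate consequence'' of Theorem \ref{t4.1}, and the one-line absorption of the weight $e^{-\alpha(p)F}\geq e^{-\alpha(p)\|F\|_0}$ is the obvious step being left implicit. No gap, no difference in approach.
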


\section{$C^{1,1}$ bound of the K\"ahler potential in terms of its $W^{2,p}$ bound}
In this section, we want to prove 
\begin{thm}\label{t3.1} There exists a constant $C_4$, depending only on $||\varphi||_0$, $||F||_0$, the absolute and first derivative bound of the Ricci form, and also the Soboloev constant of the background metric $g$, such that $n+\Delta\varphi\leq C_4$.
\end{thm}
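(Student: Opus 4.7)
The plan is to use Moser iteration to upgrade the $L^p$-bound on $w := n+\Delta\varphi$ (valid for every $p<\infty$ by Corollary \ref{c3.2} under the standing hypotheses $\|\varphi\|_0, \|F\|_0 \leq C$) to an $L^\infty$-bound. The essential new preparation is a pointwise bound $|\nabla_\varphi F|^2 \leq C$, which I would obtain by computing $\Delta_\varphi|\nabla_\varphi F|^2$ via a Bochner identity on $(M,g_\varphi)$, then inserting the second cscK equation $\Delta_\varphi F = -\underline{R} + tr_\varphi Ric_g$ into the cross-term $\langle\nabla F,\nabla\Delta_\varphi F\rangle_\varphi$. The derivatives $\nabla(tr_\varphi Ric_g)$ introduce third derivatives of $\varphi$ which are absorbed into the positive Hessian term $|\nabla_\varphi^2 F|^2$ by completing a square; a maximum principle applied to a test function of the form $|\nabla_\varphi F|^2\, e^{-a(F+\lambda\varphi)}$ (plus lower-order corrections in $\varphi$, $F$) then yields the pointwise bound, using the $C^1$-norm of $Ric_g$. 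As a consequence, $|\nabla F|^2_g = \sum_i|F_i|^2 \leq (\max_i(1+\varphi_{i\bar i}))|\nabla_\varphi F|^2_\varphi \leq Cw$.

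For the iteration proper, I discard the positive third-derivative term in (\ref{3.6new}) to obtain $\Delta_\varphi w \geq -C_1 w\cdot tr_\varphi g + \Delta F - R$. Multiplying by $w^{s-1}$, integrating against $dvol_\varphi = e^F dvol_g$, and integrating by parts produces the bound
\begin{equation*}
(s-1)\int w^{s-2}|\nabla_\varphi w|^2_\varphi\,dvol_\varphi \leq C\int w^s\cdot tr_\varphi g\,dvol_\varphi + (\text{$F$-terms})+(\text{lower order}).
\end{equation*}
The curvature term is controlled via the cofactor identity $tr_\varphi g\cdot e^F = \sum_i\prod_{j\ne i}(1+\varphi_{j\bar j}) \leq n w^{n-1}$, giving $\int w^s\cdot tr_\varphi g\,dvol_\varphi \leq n\int w^{s+n-1}dvol_g$. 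The $\Delta F$ term is integrated by parts once more against $dvol_g$; the cross-term is disposed of by Young's inequality, and the $|\nabla F|^2$ contribution is absorbed using $|\nabla F|^2\leq Cw$. To pass to the background gradient I use the second cofactor-type inequality $|\nabla w|^2_g \leq (\max_i(1+\varphi_{i\bar i}))\,|\nabla_\varphi w|^2_\varphi \leq w\,|\nabla_\varphi w|^2_\varphi$, together with $e^F$ bounded. Taking $s=q+1$, these ingredients assemble into
\begin{equation*}
\int w^{q-2}|\nabla w|^2_g\,dvol_g \leq \frac{C}{q}\int w^{q+n}\,dvol_g + C\int w^q\,dvol_g.
\end{equation*}

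Applying the $g$-Sobolev inequality to $w^{q/2}$ on the compact $2n$-real-dimensional manifold yields $\|w\|^q_{L^{q\chi}}\leq Cq\|w\|^{q+n}_{L^{q+n}} + C\|w\|^q_{L^q}$ where $\chi=\frac{n}{n-1}$. For $q>n(n-1)$ we have $q\chi>q+n$, and H\"older interpolation $\|w\|_{L^{q+n}}\leq \|w\|^{n^2/(q+n)}_{L^{q\chi}}\|w\|^{1-n^2/(q+n)}_{L^q}$ lets me absorb the first right-hand term into the left, giving a Moser step $\|w\|_{L^{q\chi}}\leq (Cq)^{1/(q-n^2)}\|w\|_{L^q}^{(q+n-n^2)/(q-n^2)}$ in which both the constant and the exponent tend to $1$ as $q\to\infty$. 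Iterating $q_{k+1}=q_k\chi$ from a sufficiently large $q_0$ (whose $L^{q_0}$-norm is furnished by Corollary \ref{c3.2}) yields $\|w\|_{L^\infty}\leq C$.

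The central obstacle throughout is the absence of any a priori quasi-isometry between $g$ and $g_\varphi$, which is the very statement we are trying to prove. The $g_\varphi$-elliptic machinery (natural for the Bochner identity for $w$) and the $g$-Sobolev inequality (the only one available) must be reconciled via the two cofactor-type inequalities $tr_\varphi g\cdot e^F\leq nw^{n-1}$ and $|\nabla w|^2_g\leq w|\nabla_\varphi w|^2_\varphi$; the cost is the extra factor $w^{n-1}$, forcing the iteration to start from an $L^p$-norm with $p$ exceeding a threshold $p_n$ of order $n^2$ (the footnote's $(3n-3)(4n+1)$ reflects careful bookkeeping of lower-order contributions I have suppressed). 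The other delicate step is the pointwise bound on $|\nabla_\varphi F|^2$: the Bochner computation genuinely requires the full cscK equation (not merely the Monge-Amp\`ere part) because third derivatives of $\varphi$ arising from $\nabla(tr_\varphi Ric_g)$ must be absorbed without any quasi-isometry.
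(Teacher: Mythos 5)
Your broad outline---Moser iteration on $n+\Delta\varphi$, a Bochner computation for $|\nabla_\varphi F|^2_\varphi$ that uses the second cscK equation, cofactor-type inequalities to pass between $g_\varphi$- and $g$-gradients, and $\chi = n/(n-1)$ Sobolev iteration---does track the skeleton of the paper's argument. But there is a genuine gap in the first step, and this gap is exactly why the paper's proof has a different and more subtle structure.

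You claim that a \emph{pointwise} bound $|\nabla_\varphi F|^2_\varphi \leq C$ can be extracted first, by a maximum principle applied to $e^{-a(F+\lambda\varphi)}|\nabla_\varphi F|^2_\varphi$, with the third derivatives of $\varphi$ coming from $\nabla(tr_\varphi Ric_g)$ ``absorbed into the positive Hessian term $|\nabla_\varphi^2 F|^2$ by completing a square.'' This cannot work. When one differentiates $tr_\varphi Ric_g = g_\varphi^{k\bar l}R_{k\bar l}$, the principal term is $-\varphi_{\bar k l i}R_{k\bar l}/((1+\varphi_{k\bar k})(1+\varphi_{l\bar l}))$; the cross term it produces against $F_{\bar i}$ is a bilinear pairing between a third derivative of $\varphi$ and a first derivative of $F$, not between two derivatives of $F$. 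It cannot be completed into a square with $|F_{,ij}|^2_\varphi$ or $|F_{,i\bar j}|^2_\varphi$. The only available good square is $|\varphi_{\alpha\bar\beta i}|^2/((1+\varphi_{\alpha\bar\alpha})(1+\varphi_{\beta\bar\beta}))$, and this appears with the correct (positive) sign only in $\Delta_\varphi(n+\Delta\varphi)$, not in $\Delta_\varphi|\nabla_\varphi F|^2_\varphi$. This is precisely why the paper couples the two quantities and works with $u = e^{F/2}|\nabla_\varphi F|_\varphi^2 + K(n+\Delta\varphi)$ from the start. Furthermore, even setting aside the third-derivative issue, the inequality for $\Delta_\varphi(e^{F/2}|\nabla_\varphi F|^2_\varphi)$ unavoidably picks up multiplicative factors $(n+\Delta\varphi)^{3n-3}$ from the cofactor estimate $(1+\varphi_{i\bar i})^{-1} \leq e^{-F}(n+\Delta\varphi)^{n-1}$, so at a maximum point of your test quantity the inequality $0 \geq \text{RHS}$ is vacuous unless $n+\Delta\varphi$ is already controlled there---which is the conclusion, not a hypothesis. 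Even for the coupled quantity $u$, the resulting differential inequality $\Delta_\varphi u \geq -C(n+\Delta\varphi)^{3n-3}u - C(n+\Delta\varphi)^{3n-3}$ has the wrong sign for a maximum principle; the paper uses Moser iteration precisely because the $L^p$ input from Corollary~\ref{c3.2} is what makes the argument close. So the ``prepare a pointwise gradient bound, then iterate'' two-step plan must be replaced by a simultaneous iteration on the coupled quantity; with that repair, the remainder of your iteration scheme is essentially the paper's.
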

 In view of Theorem 2.1, we have the following immediate consequence:
\begin{cor}
There exists a constant $C_{4.1}$, depending only on $||\varphi||_0$, $||\nabla\varphi||_0$, the background metric $g$(described as in Theorem \ref{t3.1}), such that $n+\Delta\varphi\leq C_{4.1}$.
\end{cor}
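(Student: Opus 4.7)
The plan is to combine the integral estimate from Corollary \ref{c3.2} with a Moser-type iteration that upgrades $L^{p}$ control of $n+\Delta\varphi$ to an $L^{\infty}$ bound; this iteration is essentially the content of Proposition \ref{p4.2} (Theorem \ref{t1.8new}). Since we are given $\|\varphi\|_{0}$ and $\|F\|_{0}$, Corollary \ref{c3.2} provides $\|n+\Delta\varphi\|_{L^{p}(dvol_{g})}\leq \tilde C(p)$ for every finite $p$, in particular for a specific threshold $p_{n}$ (expected to satisfy $p_{n}\leq (3n-3)(4n+1)$) that will be dictated by the iteration below. The problem thus reduces to showing that an $L^{p_{n}}$ bound on $n+\Delta\varphi$ upgrades to an $L^{\infty}$ bound.

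To run Moser iteration on the fixed background $(M,g)$, I would start from the test function $u=e^{-\alpha(F+\lambda\varphi)}(n+\Delta\varphi)$ already used in the proof of Theorem \ref{t4.1}, with $\alpha,\lambda$ now fixed rather than growing with $p$. Inequality (\ref{3.8}) supplies a good positive term proportional to $(n+\Delta\varphi)^{1+\frac{1}{n-1}}$; the difficulty is the term $\Delta F$, which after integration by parts against $u^{2p}dvol_{\varphi}$ produces $|\nabla F|^{2}$ contributions that cannot be made to have the favorable sign once $\alpha$ is frozen. The new ingredient is to compute $\Delta_{\varphi}(|\nabla_{\varphi}F|^{2})$ via the Bochner formula on $(M,\omega_{\varphi})$ and combine it with the cscK identity $\Delta_{\varphi}F=-\underline{R}+tr_{\varphi}Ric_{g}$; this yields an independent differential inequality controlling $|\nabla_{\varphi}F|^{2}$ in terms of $n+\Delta\varphi$, $|\nabla\varphi|^{2}$, and curvature of $g$. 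I would then form a compound test function such as $w=u+\beta|\nabla_{\varphi}F|^{2}$ (with $\beta$ small) for which the bad gradient terms cancel, multiply by $w^{2p}$, integrate over $(M,dvol_{g})$, and invoke the Sobolev inequality on the background to obtain a reverse-H\"older step
\begin{equation*}
\Bigl(\int_{M}w^{(2p+2)\chi}\,dvol_{g}\Bigr)^{1/\chi}\leq C(p)\int_{M}w^{2p+2}\cdot \Phi\,dvol_{g},
\end{equation*}
where $\chi>1$ is the Sobolev exponent and $\Phi$ collects lower-order factors in $n+\Delta\varphi$ and $|\nabla\varphi|^{2}$.

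Using Theorem \ref{t2.2} to dispose of $|\nabla\varphi|^{2}$ by $C_{2.3}e^{F}$ (hence bounded thanks to $\|F\|_{0}$), and applying H\"older with the high-$p$ bounds from Corollary \ref{c3.2} to control $\Phi$, one arrives at an iteration $\|w\|_{L^{q_{k+1}}}\leq A_{k}\|w\|_{L^{q_{k}}}$ with $q_{k+1}=\chi q_{k}\to\infty$ and $\prod A_{k}<\infty$. Starting this recursion from $q_{0}=p_{n}$, where the initial $L^{p_{n}}$ norm is bounded by Corollary \ref{c3.2}, finitely many Moser steps give $\|w\|_{L^{\infty}}\leq C$, and since $e^{-\alpha(F+\lambda\varphi)}$ is two-sided bounded and $|\nabla_{\varphi}F|^{2}\geq 0$, this translates into the desired pointwise bound $n+\Delta\varphi\leq C_{4}$.

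The main obstacle will be the bookkeeping in $\Delta_{\varphi}(|\nabla_{\varphi}F|^{2})$: differentiating (\ref{eq: gradF}) brings in third derivatives of $\varphi$, which after Cauchy--Schwarz introduce factors of $n+\Delta\varphi$ to a high power. Choosing the exponents $\alpha$, $\lambda$, $\beta$ and the H\"older partition so that every factor not absorbed by the Sobolev ``good'' term lives in some $L^{r}$ with $r\leq p_{n}$ is what ultimately pins down $p_{n}$ as a function of $n$; this is exactly the delicate step Theorem \ref{t1.8new} is designed to carry out, and the role of Theorem \ref{t2.2} is essential in converting $|\nabla\varphi|^{2}$ factors into controlled quantities so that the iteration closes using only $\|\varphi\|_{0}$, $\|F\|_{0}$, and fixed geometric data of $g$.
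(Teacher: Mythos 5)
There is a genuine gap. The corollary's hypothesis gives you $\|\varphi\|_{0}$ and $\|\nabla\varphi\|_{0}$, \emph{not} $\|F\|_{0}$; your proposal immediately asserts ``Since we are given $\|\varphi\|_{0}$ and $\|F\|_{0}$'' and builds everything on that, so as written it proves Theorem \ref{t3.1} rather than the corollary. The single idea that is missing, and which is the actual content of this corollary, is that $\|F\|_{0}$ is \emph{derived} from the stated data: Theorem \ref{t2.1} bounds $F$ from above in terms of $\|\nabla\varphi\|_{0}$ (and the curvature of $g$), while Proposition \ref{p2.1} bounds $F$ from below in terms of $\|\varphi\|_{0}$. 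Once you have that observation, the corollary is a one-line consequence of Theorem \ref{t3.1} and none of the Moser machinery you sketch needs to be re-run.

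Beyond that missing reduction, the lengthy iteration you outline is internal to the proof of Theorem \ref{t3.1} (i.e.\ Proposition \ref{p4.2}) and is not needed at the level of this corollary. Your proposed compound test function $w = e^{-\alpha(F+\lambda\varphi)}(n+\Delta\varphi) + \beta|\nabla_{\varphi}F|^{2}$ differs somewhat from the paper's choice $u = e^{F/2}|\nabla_{\varphi}F|_{\varphi}^{2} + K(n+\Delta\varphi)$; both pair the Laplacian with the gradient-of-$F$ quantity so that the bad $|\nabla F|^{2}$ terms cancel, but the paper's coefficient $e^{F/2}$ is engineered (via the choice $B'=\tfrac12$ in equation (\ref{4.3nn})) so that terms of the form $F_{j\bar i}F_i F_{\bar j}/[(1+\varphi_{i\bar i})(1+\varphi_{j\bar j})]$ vanish identically, a cancellation your frozen-$\alpha$ weight does not obviously reproduce. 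You would need to verify that cancellation for your $w$ before the reverse-H\"older step closes. But again, for the corollary itself the efficient route is: use Theorem \ref{t2.1} and Proposition \ref{p2.1} to turn $\|\varphi\|_{0}$, $\|\nabla\varphi\|_{0}$ into $\|F\|_{0}$, then cite Theorem \ref{t3.1}.
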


 With this assumption, we know from Corollary \ref{c3.2} that for any $p>0$, there exists constants $C_p$, depending on $||\varphi||_0$, $||F||_0$, and the background metric $g$, such that
\begin{equation}
||n+\Delta\varphi||_{L^p(M)}\leq \tilde{C}(p).
\end{equation}
Hence it suffices to prove the following statement:
\begin{prop}\label{p4.2}
Let $(\varphi,F)$ be a smooth solution to cscK, then there exists $p_n>0$, depending only on $n$, such that 
\begin{equation}
\max_M|\nabla_{\varphi}F|_{\varphi}+\max_M(n+\Delta\varphi)\leq C_{4.2}.
\end{equation}
Here $C_4$ depends only on $||F||_0$, $||n+\Delta\varphi||_{L^{p_n}(M)}$, and metric $g$(in the way described in Theorem \ref{t3.1}).
\end{prop}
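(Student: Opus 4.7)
The plan is to iterate a single differential inequality for a combined test function
\[
w := n+\Delta\varphi + A\,|\nabla_\varphi F|_\varphi^2 + K,
\]
where $A,K>0$ are constants to be chosen, so as to propagate the $L^{p_n}$ bound on $n+\Delta\varphi$ to an $L^\infty$ bound on $w$. Since $\|F\|_0$ is assumed bounded, together with Proposition \ref{p2.1} this gives a two-sided bound $C^{-1}\le e^F\le C$, so $dvol_\varphi$ and $dvol_g$ are comparable, and the background Sobolev inequality for $(M,g)$ is available for functions integrated against $dvol_\varphi$. This comparability is what makes Moser iteration feasible.

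The first step is to derive an inequality of the form $\Delta_\varphi w \geq -C_1\,w\,P(x) - C_2$, where $P(x)\in L^q(dvol_g)$ for some $q$ tied to $p_n$. From inequality (\ref{3.6new}) we already have
\[
\Delta_\varphi(n+\Delta\varphi) \geq \frac{|\varphi_{p\bar q i}|^2}{(1+\varphi_{p\bar p})(1+\varphi_{q\bar q})} - C_{3.1}(n+\Delta\varphi)\sum_i\frac{1}{1+\varphi_{i\bar i}} + \Delta F - R;
\]
the only term without a direct bound is the background Laplacian $\Delta F$, since we have no a priori $C^2$ control on $F$. To compensate, one computes $\Delta_\varphi(|\nabla_\varphi F|_\varphi^2)$ via the K\"ahler Bochner identity in the metric $\omega_\varphi$. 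Using $\mathrm{Ric}(\omega_\varphi) = \mathrm{Ric}(\omega_g) - \sqrt{-1}\partial\bar\partial F$ together with the cscK equation $\Delta_\varphi F = -\underline R + tr_\varphi\,\mathrm{Ric}_g$, the Bochner formula yields a positive term $|F_{i\bar j}|_\varphi^2 + |F_{ij}|_\varphi^2$, a transport term $2\,\mathrm{Re}\langle \nabla_\varphi F,\nabla_\varphi \Delta_\varphi F\rangle_\varphi$ whose only non-background content is third derivatives of $\varphi$ contracted with inverses (absorbable by Cauchy--Schwarz into $\varepsilon\,\frac{|\varphi_{p\bar q i}|^2}{(1+\varphi_{p\bar p})(1+\varphi_{q\bar q})}$), and a Ricci term $-F_{i\bar j}\,F^i F^{\bar j}$ (in $\varphi$-metric) absorbable into $\varepsilon|F_{i\bar j}|_\varphi^2 + C_\varepsilon|\nabla_\varphi F|_\varphi^4$. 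Choosing $A$ sufficiently large that the $A|F_{i\bar j}|_\varphi^2$ Bochner term dominates both the $\Delta F$ piece --- handled through the twice-differentiated Monge--Amp\`ere equation (\ref{1.32new}), which trades $F_{i\bar j}$ for fourth derivatives of $\varphi$ with the correct sign --- and the negative Ricci absorption, produces the desired inequality, with $P(x) = 1 + \bigl(\sum_i\frac{1}{1+\varphi_{i\bar i}}\bigr)^\alpha$ for an exponent $\alpha$ depending only on $n$.

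The Moser iteration is then standard: multiply $\Delta_\varphi w \geq -C_1 w P - C_2$ by $w^{2p-1}$, integrate against $dvol_\varphi$, integrate by parts to produce $\int_M w^{2p-2}|\nabla_\varphi w|_\varphi^2\,dvol_\varphi$ on the left, convert $dvol_\varphi$ into $dvol_g$ using $\|F\|_0$, and apply the $g$-Sobolev inequality to $w^p$ together with H\"older using $P\in L^q$. Since $\sum_i\frac{1}{1+\varphi_{i\bar i}} \leq e^{-F}(n+\Delta\varphi)^{n-1}$ by arithmetic--geometric mean, an $L^{p_n}$ bound on $n+\Delta\varphi$ places $P$ in $L^{p_n/(\alpha(n-1))}$, and requiring this to leave room for the Sobolev exponent $\tfrac{n}{n-1}$ at each iteration step forces $p_n$ to be of order $n^2$, consistent with the cited bound $p_n \leq (3n-3)(4n+1)$. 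The standard recursion $\|w\|_{L^{\theta p}} \leq (Cp)^{1/p}\|w\|_{L^p}$ with $\theta>1$ then iterates to $\|w\|_{L^\infty} \leq C\|w\|_{L^{p_n}}$, which is the desired $L^\infty$ bound on both $n+\Delta\varphi$ and $|\nabla_\varphi F|_\varphi$.

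The hard part is the algebraic computation in the second paragraph: one must verify that after the Bochner identity for $|\nabla_\varphi F|_\varphi^2$ is combined with (\ref{3.6new}) and the differentiated Monge--Amp\`ere identities are used to eliminate $\Delta F$ and $F_{i\bar j}$, no uncontrolled second-derivative-of-$F$ quantities remain. Carrying out this cancellation while tracking enough room in the H\"older exponents for the Sobolev step is what determines the explicit value of $p_n$ and pins down the choice of $A$.
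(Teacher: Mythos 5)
Your overall strategy---combining $n+\Delta\varphi$ with $|\nabla_\varphi F|_\varphi^2$ in a single test function, deriving a differential inequality, and Moser iterating using the $L^{p_n}$ bound---matches the paper's in spirit, but the algebraic core of your paragraph two has a genuine gap that the paper's choice of test function was specifically engineered to avoid.

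The paper does not use $w = n+\Delta\varphi + A|\nabla_\varphi F|_\varphi^2 + K$; it uses $u = e^{F/2}|\nabla_\varphi F|_\varphi^2 + K(n+\Delta\varphi)$. The exponential weight is not cosmetic. When you apply the Bochner formula in the $\omega_\varphi$ metric you pick up $Ric_{\varphi,i\bar j}F^iF^{\bar j}$, and since $Ric_\varphi = Ric_g - \sqrt{-1}\partial\bar\partial F$ this produces the term $-F_{i\bar j}F^iF^{\bar j}$ (in $\varphi$-raised indices). You propose to absorb it via $|F_{i\bar j}F^iF^{\bar j}| \leq \varepsilon|F_{i\bar j}|_\varphi^2 + C_\varepsilon|\nabla_\varphi F|_\varphi^4$. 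The quartic remainder $C_\varepsilon|\nabla_\varphi F|_\varphi^4$ is fatal: it scales as $w^2$, not $w$, so the differential inequality you arrive at is $\Delta_\varphi w \geq -C_1 w^2/A - C_2 wP - C_3$ rather than the required $\Delta_\varphi w \geq -C_1 wP - C_2$. Making $A$ large shrinks the coefficient of $w^2$ but does not remove the quadratic nonlinearity, and Moser iteration does not close on a superlinear inequality without an a priori $L^\infty$ bound, which is circular. Your own test function carries no compensating positive term of order $|\nabla_\varphi F|_\varphi^4$. The paper sidesteps this entirely: writing $e^{B(F)}|\nabla_\varphi F|_\varphi^2$ with $B'=\tfrac12$, the first-order transport term $B'(F_i(|\nabla_\varphi F|^2)_{\bar i} + \text{c.c.})$ contributes $+2B'\,\mathrm{Re}(F_{j\bar i}F_iF_{\bar j})$, which with $B'=\tfrac12$ cancels $-F_{i\bar j}F^iF^{\bar j}$ exactly (the paper notes explicitly ``the choice $B'\equiv\frac{1}{2}$ makes such terms exactly cancel out''), while the remaining pieces of the transport term assemble with $|F_{ij}|_\varphi^2$ and $B'^2|\nabla_\varphi F|_\varphi^4$ into a droppable complete square. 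No quartic remnant survives. Without this mechanism your second paragraph does not yield a usable inequality.

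A secondary point you elide: after integrating $w^{2p-1}\Delta_\varphi w$ by parts you get $\int w^{2p-2}|\nabla_\varphi w|_\varphi^2\,dvol_\varphi$, which controls the gradient in the degenerate $\varphi$-metric, but the Sobolev inequality is in the background $g$-metric. Passing from $|\nabla_\varphi w|_\varphi$ to $|\nabla w|_g$ costs a factor of $(n+\Delta\varphi)^{1/2}$, which is unbounded. The paper handles this with an $L^{2-\eps}$ interpolation (their (\ref{1.47})--(\ref{3.28new})), using the Sobolev embedding at exponent $2-\eps$ and placing the factor $(n+\Delta\varphi)^{(2-\eps)/\eps}$ into $L^1$ via the $W^{2,p}$ bound; this is exactly where the value $p_n \leq (3n-3)(4n+1)$ is determined. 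Applying the $g$-Sobolev inequality at exponent $2$ ``together with H\"older,'' as you write, does not work as stated.
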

\begin{rem}\label{r3.2}
From the argument below, one can explicitly get an upper bound for 
$$p_n\leq (3n-3)(4n+1).$$ This upper bound is probably not sharp.
\end{rem}
\begin{proof}  Let us first  calculate $\Delta_{\varphi}(|\nabla_{\varphi}f |_{\varphi}^2)$ for any smooth function $f$ in $M.\;$ First we do the calculation under an orthonormal frame $g_{\varphi}$.
\[
\begin{array}{lcl} \Delta_\varphi |\nabla_\varphi f|^2 & = & (f_i f_{\bar i})_{,j \bar j}\\
& = & f_{,i j \bar j} f_{\bar i} + f_i f_{,\bar i j \bar j} + |f_{,ij}|_{\varphi}^2 + |f_{, i\bar j}|_{\varphi}^2\\
& = &  f_{,j i \bar j} f_{\bar i} + f_i f_{,j \bar i  \bar j} + |f_{,ij}|_{\varphi}^2 + |f_{, i\bar j}|_{\varphi}^2\\
& = &  (\Delta_\varphi f)_i f_{\bar i} + f_i (\Delta_\varphi f)_{\bar i} + Ric_{\varphi,i\bar j} f_j f_{\bar i} +|f_{,ij}|_{\varphi}^2 + |f_{, i\bar j}|_{\varphi}^2.
\end{array}
\]
In the above, $f_{,ij\cdots}$ denote covariant derivatives under the metric $g_{\varphi}$. 
Let $B(\lambda):\bR\rightarrow\bR$ be a smooth function, now we calculate $\Delta_{\varphi}(e^{B(f)}|\nabla_{\varphi}f|_{\varphi}^2)$.

\begin{equation}\label{4.3nn}
\begin{split}
&e^{-B(f)} \cdot \Delta_\varphi (e^{B(f)} |\nabla_\varphi f|_{\varphi}^2) \\
&=  \Delta_\varphi (|\nabla_\varphi f|_{\varphi}^2) + B' (f_i (|\nabla_\varphi f|_{\varphi}^2)_{\bar i} + f_{\bar i} (|\nabla_\varphi f|_{\varphi}^2)_{ i})\\
&\quad\quad\quad\quad + \left((B'^2 + B'') |\nabla_\varphi f|^2 + B' \Delta_\varphi f\right) |\nabla_\varphi f|_{\varphi}^2\\
& =   (\Delta_\varphi f)_i f_{\bar i} + f_i (\Delta_\varphi f)_{\bar i} + Ric_{\varphi,i\bar j} f_j f_{\bar i} +|f_{,ij}|_{\varphi}^2 + |f_{, i\bar j}|_{\varphi}^2\\
 & + B'\left( f_i f_j f_{,\bar j\bar i} + f_i f_{,j \bar i} f_{\bar j} +f_{\bar i} f_j  f_{,\bar ji} + f_{\bar i} f_{,j  i} f_{\bar j} \right) \\
&\quad\quad\quad\quad\quad\quad+\left((B'^2 + B'') |\nabla_\varphi f|_{\varphi}^2 + B' \Delta_\varphi f\right) |\nabla_\varphi f|_{\varphi}^2\\
&\geq   (\Delta_\varphi f)_i f_{\bar i} + f_i (\Delta_\varphi f)_{\bar i} + Ric_{\varphi,i\bar j} f_j f_{\bar i} + |f_{, i\bar j}|_{\varphi}^2\\
&  + B'\left(  f_i f_{,j \bar i} f_{\bar j} +f_{\bar i} f_j f_{,\bar ji} \right) +\left( B'' |\nabla_\varphi f|_{\varphi}^2 + B' \Delta_\varphi f\right) |\nabla_\varphi f|_{\varphi}^2.
\end{split}
\end{equation}

In the inequality above, we noticed and dropped the following complete square:
\begin{equation*}
B'^2|\nabla_{\varphi}f|_{\varphi}^4+B'f_if_jf_{,\bar{j}\bar{i}}+B'f_{\bar{i}}f_{\bar{j}}f_{,ij}+|f_{,ij}|_{\varphi}^2=|f_{,ij}+B'f_if_j|_{\varphi}^2.
\end{equation*}

 We apply above calculation to $F$. Notice that
 \[
Ric_{\varphi, i \bar j} = R_{i \bar j} - F_{i\bar j}.
\]
Set $B' = {1\over 2},\;$and we switch to normal coordinate of $g$ (c.f. (\ref{eq:normalcoord})), 
then we have

\begin{equation}\label{1.240}
\begin{split}
e^{-{F\over 2}} \Delta_{\varphi}(e^{F\over 2}|&\nabla_{\varphi}F|^2)\geq {{ (\Delta_\varphi F)_i F_{\bar i} + (\Delta_\varphi F)_{\bar i }F_{i}}\over {1 +\varphi_{i \bar i}}} +\frac{R_{j\bar{i}}F_iF_{\bar{j}}}{(1+\varphi_{i\bar{i}})(1+\varphi_{j\bar{j}})} \\
& \qquad \qquad +\frac{|F_{i\bar{\alpha}}|^2}{(1+\varphi_{i\bar{i}})(1+\varphi_{\alpha\bar{\alpha}})}+{1\over 2} \Delta_{\varphi}F |\nabla_\varphi F|^2_{\varphi} .
\end{split}
\end{equation}
 
Next we wish to use the equation satisfied by $F$:
\[
\Delta_\varphi F = - \underline{R} + tr_\varphi Ric.
\]
Take derivative with respect to $z_i$ on both sides, we obtain:
\[\begin{array}{lcl}
(\Delta_\varphi F)_{i} & = &  - g_\varphi^{k\bar p} g_{\varphi, \bar p q i} g_\varphi^{q \bar l} R_{k \bar l} + g_\varphi^{k \bar l} R_{k\bar l, i}\\
& = &  -  {{\varphi_{\bar k l i } R_{k \bar l}}\over {(1+\varphi_{k\bar k}) (1+ \varphi_{l \bar l})}} + {R_{k\bar k, i} \over {1 +\varphi_{k \bar k}}}. \end{array}
\]
Plugging this into equation (\ref{1.240}),  we have
\begin{equation}\label{1.24}
\begin{split}
e^{-{F\over 2}} \Delta_{\varphi}(e^{F\over 2}|&\nabla_{\varphi}F|^2)\geq-\frac{F_{\bar{i}}\varphi_{\beta\bar{\alpha}i}R_{\alpha\bar{\beta}}+F_i\varphi_{\beta\bar{\alpha}\bar{i}}R_{\alpha\bar{\beta}}}{(1+\varphi_{i\bar{i}})(1+\varphi_{\alpha\bar{\alpha}})(1+\varphi_{\beta\bar{\beta}})}+\frac{F_{\bar{i}}R_{\alpha\bar{\alpha},i}+F_iR_{\alpha\bar{\alpha},\bar{i}}}{(1+\varphi_{\alpha\bar{\alpha}})(1+\varphi_{i\bar{i}})}\\
&+\frac{R_{j\bar{i}}F_iF_{\bar{j}}}{(1+\varphi_{i\bar{i}})(1+\varphi_{j\bar{j}})}
+\frac{|F_{i\bar{\alpha}}|^2}{(1+\varphi_{i\bar{i}})(1+\varphi_{\alpha\bar{\alpha}})}+{1\over 2} (-\underline{R}+\frac{R_{i\bar{i}}}{1+\varphi_{i\bar{i}}}) |\nabla_\varphi F|^2_{\varphi} .
\end{split}
\end{equation}
In the above, $\varphi_{\beta\bar{\alpha}i}$, $R_{\alpha\bar{\alpha},i}$ etc are just usual derivatives taken under the coordinate as specified above. 
Notice that there will be no more terms like $\frac{F_{j\bar{i}}F_iF_{\bar{j}}}{(1+\varphi_{i\bar{i}})(1+\varphi_{j\bar{j}})}$, because the choice $B'\equiv\frac{1}{2}$ makes such terms exactly cancel out.
Now we proceed further from (\ref{1.24}). 
As preparation, we observe that for any $1\leq i\leq n$:
\begin{equation}\label{1.25}
\frac{1}{1+\varphi_{i\bar{i}}}=e^{-F}\Pi_{j\neq i}(1+\varphi_{j\bar{j}})\leq e^{-F}(n+\Delta\varphi)^{n-1}.
\end{equation}
First we can estimate as follows, with various constants $C_i$ depending only on $n$, $||F||_0$, and the curvature bound of the original metric $g$.
\begin{equation}\label{1.26}
|{1\over 2} e^{{F\over 2}}(-\underline{R}+\frac{R_{i\bar{i}}}{1+\varphi_{i\bar{i}}})|\leq C_{4.3}(1+\sum_i\frac{1}{1+\varphi_{i\bar{i}}})\leq C_{4.3}(1+ne^{-F}(n+\Delta\varphi)^{n-1}).
\end{equation}
\begin{equation}\label{1.27}
\begin{split}
\frac{|F_{\bar{i}}\varphi_{\beta\bar{\alpha}i}R_{\alpha\bar{\beta}}|}{(1+\varphi_{i\bar{i}})(1+\varphi_{\alpha\bar{\alpha}})(1+\varphi_{\beta\bar{\beta}})}&\leq\frac{1}{2}\frac{|\varphi_{\beta\bar{\alpha}i}|^2|R_{\alpha\bar{\beta}}|^2}{(1+\varphi_{\alpha\bar{\alpha}})(1+\varphi_{\beta\bar{\beta}})}+\frac{1}{2}\frac{|F_{\bar{i}}|^2}{(1+\varphi_{i\bar{i}})^2(1+\varphi_{\alpha\bar{\alpha}})(1+\varphi_{\beta\bar{\beta}})}\\
&\leq C_{4.4}\frac{|\varphi_{\beta\bar{\alpha}i}|^2}{(1+\varphi_{\alpha\bar{\alpha}})(1+\varphi_{\beta\bar{\beta}})}+\frac{C_{4.4}|F_i|^2}{1+\varphi_{i\bar{i}}}(n+\Delta\varphi)^{3n-3}.
\end{split}
\end{equation}
In the second line of above estimate, we used (\ref{1.25}) to estimate the extra powers of $\frac{1}{1+\varphi_{\alpha\bar{\alpha}}}$.
The conjugate term will satisfy the same estimate as above.
\begin{equation}\label{1.28}
\begin{split}
|\frac{F_{\bar{i}}R_{\alpha\bar{\alpha},i}}{(1+\varphi_{\alpha\bar{\alpha}})(1+\varphi_{i\bar{i}})}|&\leq\frac{1}{2}\frac{|F_{\bar{i}}|^2|R_{\alpha\bar{\alpha},i}|^2}{1+\varphi_{i\bar{i}}}+\frac{1}{2}\frac{1}{(1+\varphi_{\alpha\bar{\alpha}})^2(1+\varphi_{i\bar{i}})}\\
&\leq C_{4.5}\frac{|F_i|^2}{1+\varphi_{i\bar{i}}}+C_{4.5}(n+\Delta\varphi)^{3n-3}.
\end{split}
\end{equation}
Finally 
\begin{equation}\label{1.29}
\begin{split}
|\frac{R_{j\bar{i}}F_iF_{\bar{j}}}{(1+\varphi_{i\bar{i}})(1+\varphi_{j\bar{j}})}|&\leq\frac{1}{2}\frac{|R_{j\bar{i}}|^2|F_i|^2}{(1+\varphi_{i\bar{i}})(1+\varphi_{j\bar{j}})}+\frac{1}{2}\frac{|R_{j\bar{i}}|^2|F_{\bar{j}}|^2}{(1+\varphi_{i\bar{i}})(1+\varphi_{j\bar{j}})}\\
&\leq C_{4.6}\frac{|F_i|^2}{1+\varphi_{i\bar{i}}}(n+\Delta\varphi)^{n-1}.
\end{split}
\end{equation}
Now combining the estimates in (\ref{1.26}), (\ref{1.27}), (\ref{1.28}), (\ref{1.29}), we obtain from (\ref{1.24}):
\begin{equation}\label{1.30}
\begin{split}
\Delta_{\varphi}(e^{\frac{1}{2}F}|\nabla_{\varphi}F|_{\varphi}^2)&\geq-C_{4.7}(n+\Delta\varphi)^{3n-3}|\nabla_{\varphi}F|_{\varphi}^2-C_{4.7}\frac{|\varphi_{\beta\bar{\alpha}i}|^2}{(1+\varphi_{\alpha\bar{\alpha}})(1+\varphi_{\beta\bar{\beta}})}\\
&-C_{4.7}(n+\Delta\varphi)^{3n-3}+\frac{1}{C_{4.7}}\frac{|F_{i\bar{\alpha}}|^2}{(1+\varphi_{i\bar{i}})(1+\varphi_{\alpha\bar{\alpha}})}.
\end{split}
\end{equation}
Note that $n+\Delta\varphi \geq n e^{\frac{F}{n}}$, hence has a positive uniform lower bound since $F$ is bounded from below. Here $C_{4.7}$ is some constant depending only on $n$, $||F||_0$, and the curvature bound of the original metric $g$.
In order to handle the second term on the right hand side, we need to consider $\Delta_{\varphi}(n+\Delta\varphi)$.
For this we can recall our calculation in (\ref{3.6new}):
\begin{equation}\label{1.33}
\begin{split}
\Delta_{\varphi}(n+\Delta\varphi)&=\frac{R_{i\bar{i}\alpha\bar{\alpha}}(1+\varphi_{i\bar{i}})}{1+\varphi_{\alpha\bar{\alpha}}}+\frac{|\varphi_{\alpha\bar{\beta}i}|^2}{(1+\varphi_{\alpha\bar{\alpha}})(1+\varphi_{\beta\bar{\beta}})}+F_{i\bar{i}}-R_{i\bar{i}}\\
&\geq\frac{-C_{4.8}(1+\varphi_{i\bar{i}})}{1+\varphi_{\alpha\bar{\alpha}}}+\frac{|\varphi_{\alpha\bar{\beta}i}|^2}{(1+\varphi_{\alpha\bar{\alpha}})(1+\varphi_{\beta\bar{\beta}})}+F_{i\bar{i}}-C_{4.8}\\
&\geq-C_{4.9}(n+\Delta\varphi)^n+\frac{|\varphi_{\alpha\bar{\beta}i}|^2}{(1+\varphi_{\alpha\bar{\alpha}})(1+\varphi_{\beta\bar{\beta}})}+F_{i\bar{i}}-C_{4.8}.
\end{split}
\end{equation}
Let $K>0$ be a constant, we combine (\ref{1.30}), (\ref{1.33}), and conclude:
\begin{equation}
\begin{split}
\Delta_{\varphi}(e^{\frac{1}{2}F}&|\nabla_{\varphi}F|_{\varphi}^2+K(n+\Delta\varphi))\geq -C_{4.7}(n+\Delta\varphi)^{3n-3}|\nabla_{\varphi}F|_{\varphi}^2+(K-C_{4.7})\frac{|\varphi_{\alpha\bar{\beta}i}|^2}{(1+\varphi_{\alpha\bar{\alpha}})(1+\varphi_{\beta\bar{\beta}})}\\
&-C_{4.7}(n+\Delta\varphi)^{3n-3}-KC_{4.9}(n+\Delta\varphi)^n+KF_{i\bar{i}}-K C_{4.8}+\frac{1}{C_{4.7}}\frac{|F_{i\bar{\alpha}}|^2}{(1+\varphi_{i\bar{i}})(1+\varphi_{\alpha\bar{\alpha}})}.
\end{split}
\end{equation}
First we choose $K=C_{4.7}+1$, and calculate:
\begin{equation}
|KF_{i\bar{i}}|\leq\frac{1}{2C_{4.7}}\frac{|F_{i\bar{i}}|^2}{(1+\varphi_{i\bar{i}})^2}+\frac{K^2C_{4.7}}{2}(1+\varphi_{i\bar{i}})^2\leq\frac{1}{2C_{4.7}}\frac{|F_{i\bar{i}}|^2}{(1+\varphi_{i\bar{i}})^2}+\frac{nK^2C_{4.7}}{2}(n+\Delta\varphi)^2.
\end{equation}
Hence there exists a constant $C_{4.91}$, with the same dependence as said above, such that
\begin{equation}
\begin{split}
\Delta_{\varphi}&(e^{\frac{1}{2}F}|\nabla_{\varphi}F|_{\varphi}^2+K(n+\Delta\varphi))\geq-C_{4.91}(n+\Delta\varphi)^{3n-3}|\nabla_{\varphi}F|_{\varphi}^2-C_{4.91}(n+\Delta\varphi)^{3n-3}\\
&\geq-C_{4.91}e^{-\frac{1}{2}F}(n+\Delta\varphi)^{3n-3}(e^{\frac{1}{2}F}|\nabla_{\varphi}F|_{\varphi}^2+K(n+\Delta\varphi))-C_{4.91}(n+\Delta\varphi)^{3n-3}.
\end{split}
\end{equation}
Set
\[ u=e^{\frac{1}{2}F}|\nabla_{\varphi}F|_{\varphi}^2+K(n+\Delta\varphi),\]
 we obtain the key estimate from here:
\begin{equation}\label{key}
\Delta_{\varphi}u\geq-C_{4.92}(n+\Delta\varphi)^{3n-3}u-C_{4.92}(n+\Delta\varphi)^{3n-3}.
\end{equation}
Next we plan to do iteration, using (\ref{key}).
Notice that for any $p>0$:
\begin{equation}
{1\over {2p+1}} \Delta_{\varphi}(u^{2p+1})= u^{2p}\Delta_{\varphi}u+
2p u^{2p-1}|\nabla_{\varphi}u|_{\varphi}^2.
\end{equation}
Integrate over $M$, we obtain:
\begin{equation}
\int_M2pu^{2p-1}|\nabla_{\varphi}u|_{\varphi}^2dvol_{\varphi}=-\int_Mu^{2p}
\Delta_{\varphi}udvol_{\varphi}.
\end{equation}
Plug in the key estimate (\ref{key}), we get:
\begin{equation}
\begin{split}
\int_M2pu^{2p-1}|\nabla_{\varphi}u|_{\varphi}^2dvol_{\varphi}&\leq C_{4.92}\int_M(n+\Delta\varphi)^{3n-3}
u^{2p+1}dvol_{\varphi}\\
&+C_{4.92}\int_M
(n+\Delta\varphi)^{3n-3}u^{2p}dvol_{\varphi}.
\end{split}
\end{equation}
Or equivalently:
\begin{equation}
\int_M|\nabla_{\varphi}(u^{p+\frac{1}{2}})|^2_{\varphi}dvol_{\varphi}\leq\frac{C_{4.92}(p+\frac{1}{2})^2}{2p}\int_M(n+\Delta\varphi)^{3n-3}(u^{2p+1}+u^{2p})dvol_{\varphi}.
\end{equation}
Observe that $u\geq K(n+\Delta\varphi)\geq e^{\frac{F}{n}}$, $u^{2p}\leq u^{2p+1}e^{-\frac{F}{n}} \leq C\cdot u^{2p+1}$. Hence for some constant $C_{3.91}$, we have
\begin{equation}
\int_M|\nabla_{\varphi}(u^{p+\frac{1}{2}})|_{\varphi}^2
dvol_{\varphi}\leq \frac{C_{4.93}(p+\frac{1}{2})^2}{2p}\int_M(n+\Delta\varphi)^{3n-3}u^{2p+1}dvol_{\varphi}.
\end{equation}
Since $dvol_{\varphi}=e^Fdvol_g$, and $F$ is bounded, we see
\begin{equation}\label{1.43}
\int_M|\nabla_{\varphi}(u^{p+\frac{1}{2}})|_{\varphi}^2
dvol_{g}\leq \frac{C_{4.94}(p+\frac{1}{2})^2}{2p}\int_M(n+\Delta\varphi)^{3n-3}u^{2p+1}dvol_{g}.
\end{equation}
Fix $\eps\in (0,2)$ to be determined, we estimate the right hand side of (\ref{1.43}):
\begin{equation}\label{1.44}
\int_M(n+\Delta\varphi)^{3n-3}u^{2p+1}dvol_g\leq
\bigg(\int_Mu^{(p+\frac{1}{2})(2+\eps)}dvol_g\bigg)^{\frac{2}{2+\eps}}\bigg(\int_M(n+\Delta\varphi)^{\frac{(3n-3)(2+\eps)}{\eps}}\bigg)^{\frac{\eps}{2+\eps}}.
\end{equation}
Denote $v=u^{p+\frac{1}{2}}$, then (\ref{1.43}) now becomes:
\begin{equation}\label{4.22new}
\int_M|\nabla_{\varphi}v|_{\varphi}^2dvol_g\leq\frac{C_{4.94}(p+\frac{1}{2})^2}{2p}\bigg(\int_M(n+\Delta\varphi)^{\frac{(3n-3)(2+\eps)}{\eps}}\bigg)^{\frac{\eps}{2+\eps}}\cdot\bigg(\int_Mv^{2+\eps}dvol_g\bigg)^{\frac{2}{2+\eps}}.
\end{equation}
We estimate the left hand side of (\ref{4.22new}) from below:
\begin{equation}
\begin{split}
|\nabla v|^{2-\eps}&\leq\sum_i|v_i|^{2-\eps}=\sum_i\frac{|v_i|^{2-\eps}}{(1+\varphi_{i\bar{i}})^{\frac{2-\eps}{2}}}\cdot(1+\varphi_{i\bar{i}})^{\frac{2-\eps}{2}}\\
&\leq\bigg(\sum_i\frac{|v_i|^2}{1+\varphi_{i\bar{i}}}\bigg)^{\frac{2-\eps}{2}}(\sum_i(1+\varphi_{i\bar{i}})^{\frac{2-\eps}{\eps}}\bigg)^{\frac{\eps}{2}}\leq |\nabla_{\varphi}v|_{\varphi}^{2-\eps}n^{\frac{\eps}{2}}(n+\Delta\varphi)^{\frac{2-\eps}{2}}.
\end{split}
\end{equation}
Integrate and use Holder inequality, we get:
\begin{equation}
\begin{split}
\int_M&|\nabla v|^{2-\eps}dvol_g\leq n^{\frac{\eps}{2}}\int_M|\nabla_{\varphi}v|_{\varphi}^{2-\eps}(n+\Delta\varphi)^{\frac{2-\eps}{2}}dvol_g\\
&\leq n^{\frac{\eps}{2}}\bigg(\int_M|\nabla_{\varphi}v|_{\varphi}^2\bigg)^{\frac{2-\eps}{2}}\bigg(\int_M(n+\Delta\varphi)^{\frac{2-\eps}{\eps}}dvol_g\bigg)^{\frac{\eps}{2}}.
\end{split}
\end{equation}
Therefore, for $p\geq \frac{1}{2}$, we may apply (\ref{4.22new}) to get:
\begin{equation}\label{1.47}
\begin{split}
\bigg(\int_M|\nabla v|^{2-\eps}dvol_g\bigg)^{\frac{2}{2-\eps}}   &\leq n^{\frac{\eps}{2-\eps}}\bigg(\int_M(n+\Delta\varphi)^{\frac{2-\eps}{\eps}}dvol_g\bigg)^{\frac{\eps}{2-\eps}}  \int_M|\nabla_{\varphi}v|_{\varphi}^2dvol_g\\
&\leq C_{4.95}pK_{\eps}\bigg(\int_Mv^{2+\eps}dvol_g\bigg)^{\frac{2}{2+\eps}}.
\end{split}
\end{equation}
Here
\begin{equation}\label{3.28new}
K_{\eps}= n^{\frac{\eps}{2-\eps}} \cdot \bigg(\int_M(n+\Delta\varphi)^{\frac{2-\eps}{\eps}}dvol_g\bigg)^{\frac{\eps}{2-\eps}}\cdot\bigg(\int_M(n+\Delta\varphi)^{\frac{(3n-3)(2+\eps)}{\eps}}\bigg)^{\frac{\eps}{2+\eps}}.
\end{equation}
Apply the Sobolev embedding with exponent $2-\eps$, and denote $\theta=\frac{2n(2-\eps)}{2n-2+\eps}$ to be the improved integrability, we get
$$
||v||_{L^{\theta}(dvol_g)}\leq C_{sob}(||\nabla v||_{L^{2-\eps}(dvol_g)}+||v||_{L^{2-\eps}(dvol_g)}).
$$
Recall that $v=u^{p+\frac{1}{2}}$, this means:
\begin{equation}\label{1.49}
\begin{split}
\bigg(\int_Mu^{(p+\frac{1}{2})\theta}dvol_g&\bigg)^{\frac{2}{\theta}}\leq C_{sob}\bigg(\big(\int_M|\nabla(u^{p+\frac{1}{2}})|^{2-\eps}dvol_g\big)^{\frac{2}{2-\eps}}+\big(\int_Mu^{(p+\frac{1}{2})(2-\eps)}dvol_g\big)^{\frac{2}{2-\eps}}\bigg)\\
&\leq C_{sob}\bigg(C_{4.95}pK_{\eps}\big(\int_Mu^{(p+\frac{1}{2})(2+\eps)}dvol_g\big)^{\frac{2}{2+\eps}}+\big(\int_Mu^{(p+\frac{1}{2})(2-\eps)}dvol_g\big)^{\frac{2}{2-\eps}}\bigg)\\
&\leq C_{4.96,\eps}p\big(\int_Mu^{(p+\frac{1}{2})(2+\eps)}dvol_g\big)^{\frac{2}{2+\eps}}.
\end{split}
\end{equation}
Here $C_{4.96,\eps}$ has the same dependence as $C_i$'s above, but with additional dependence on $\eps$.
From the 1st line to 2nd line, we used (\ref{1.47}).
 Now choose $\eps>0$ small so that $\theta>2+\eps$, then above estimate indeed improves integrability, namely we need
\begin{equation}\label{3.30new}
\frac{2n(2-\eps)}{2n-2+\eps}>2+\eps.
\end{equation}
We fix $\eps$ and (\ref{1.49}) gives for $p\geq\frac{1}{2}$:
\begin{equation}
||u||_{L^{(p+\frac{1}{2})\theta}}\leq\big(C_{4.97}p\big)^{\frac{1}{p+\frac{1}{2}}}||u||_{L^{(p+\frac{1}{2})(2+\eps)}}.
\end{equation}
Denote $\chi=\frac{\theta}{2+\eps}>1$, and choose $p+\frac{1}{2}=\chi^i$, for $i\geq0$. Then we obtain:
\begin{equation}
||u||_{L^{(2+\eps)\chi^{i+1}}}\leq\big(C_{4.97}\chi^i\big)^{\frac{1}{\chi^i}}||u||_{L^{(2+\eps)\chi^i}}.
\end{equation}
It follows that
\begin{equation}
||u||_{L^{\infty}}\leq C_{4.97}^{\sum_{i\geq0}\frac{1}{\chi^i}}\cdot\chi^{\sum_{i\geq0}\frac{i}{\chi^i}}||u||_{L^{2+\eps}}\leq C_{4.97}^{\sum_{i\geq0}\frac{1}{\chi^i}}\cdot\chi^{\sum_{i\geq0}\frac{i}{\chi^i}}||u||_{L^1}^{\frac{1}{2+\eps}}||u||_{L^{\infty}}^{\frac{1+\eps}{2+\eps}}.
\end{equation}
From above we get estimate of $||u||_{L^{\infty}}$ in terms of $||u||_{L^1}$.
But recall $u=e^{\frac{1}{2}F}|\nabla_{\varphi}F|^2+K(n+\Delta\varphi)$, so $L^1$ estimate is available.

Indeed, it is clear that $n+\Delta\varphi\in L^1$. To see $e^{\frac{1}{2}F}|\nabla_{\varphi}F|_{\varphi}^2\in L^1$, we just need to show $|\nabla_{\varphi}F|_{\varphi}^2\in L^1$ since $F$ is now assumed to be bounded.
Then we can calculate:
\begin{equation}
\Delta_{\varphi}(F^2)=2|\nabla_{\varphi}F|_{\varphi}^2+2F\Delta_{\varphi}F=2|\nabla_{\varphi}F|_{\varphi}^2+2F(-\underline{R}+tr_{\varphi}Ric).
\end{equation}
Integrate with respect to $dvol_{\varphi}=e^Fdvol_g$, we see
\begin{equation}
\begin{split}
\int_Me^{F}&|\nabla_{\varphi}F|_{\varphi}^2dvol_g=\int_Me^{F}F(\underline{R}-tr_{\varphi}Ric)dvol_{g}\leq C_{4.98}\int_M(1+tr_{\varphi}g)dvol_{g}\\
&\leq C_{3.96}(n+1)vol(M).
\end{split}
\end{equation}
Here $C_{4.98}$ may depend on $||F||_0$.
To see the range of $p_n$ asserted in the Remark \ref{r3.2}, we notice the choice of $\eps=\frac{1}{2n}$ verifies the requirement in (\ref{3.30new}).
With this choice, the highest power of $n+\Delta\varphi$ appearing in (\ref{3.28new}) is exactly $(3n-3)(4n+1)$. Once we have control over $K_{\eps}$, the rest of the proof goes through.
\end{proof}

\section{Entropy bound of the volume ratio and $C^0$ bound of K\"ahler potential }
The main goal of this section is to show the $C^0$ bound of $\varphi$ implies a bound for $\int_Me^FFdvol_g$ and vice versa:
\begin{thm} \label{Thm5.1} Let $(\varphi, F)$ be a smooth solution to cscK, then $\int_Me^FFdvol_g$ can be bounded in terms of $||\varphi||_0$. Conversely, a bound for $\int_Me^FFdvol_g$ implies a bound for $||F||_0$, in particular $||\varphi||_0$.
\end{thm}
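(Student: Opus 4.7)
The statement is a double implication with two rather asymmetric halves; I would treat them separately.

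\emph{Forward direction: a $\|\varphi\|_0$ bound implies an entropy bound.} The plan is to appeal to the Mabuchi K-energy together with its Chen--Tian decomposition
\begin{equation*}
K(\varphi) \;=\; \int_M F\, e^F\, \omega^n \;+\; J_{-\mathrm{Ric}}(\varphi),
\end{equation*}
whose first summand is precisely the entropy we wish to control. The second piece $J_{-\mathrm{Ric}}$ is a twisted Aubin--Mabuchi-type functional whose absolute value is easily estimated, after integration by parts against the reference Ricci form, by a constant depending only on $\|\varphi\|_0$ and the background geometry. Since $\varphi$ solves cscK and the K-energy is convex along $C^{1,1}$-geodesics (Berman--Berndtsson, building on Chen--Li--Paun), $\varphi$ is a minimizer of $K$. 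Comparing with the trivial potential $0$ and using $K(0)=0$ yields $K(\varphi)\le 0$, and hence
\begin{equation*}
\int_M F\, e^F\, \omega^n \;\le\; -J_{-\mathrm{Ric}}(\varphi) \;\le\; C(\|\varphi\|_0).
\end{equation*}
The corresponding lower bound on the entropy is automatic from $xe^x\ge -e^{-1}$.

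\emph{Reverse direction: an entropy bound implies a $\|F\|_0$ bound.} This is the substantive half, and I would proceed in two stages matching Theorems~\ref{t1.3new} and \ref{t1.4new}. Stage one is to upgrade the entropy bound to an $L^p$ bound on $e^F$ for every finite $p$ together with a $C^0$ bound on $\varphi$. The key tool here is Tian's $\alpha$-invariant, which supplies an a priori estimate of the form $\int_M e^{-\alpha\varphi}\,\omega^n\le C$; this is combined with a carefully chosen test function mixing $F$ and $\varphi$, so that Jensen's inequality converts the single integral control $\int_M F e^F\,\omega^n\le C$ into exponential integrability of $F$. Stage two is a De~Giorgi/Moser-type iteration on sublevel sets (or equivalently on $L^p$ norms as $p\to\infty$) promoting the $L^p$ control of $e^F$ to an $L^\infty$ bound.

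\emph{From $\|F\|_0$ to $\|\varphi\|_0$, and the main obstacle.} Once $e^F$ is bounded pointwise, the complex Monge--Amp\`ere equation (\ref{csck1}) has bounded right-hand side, so the classical Yau--Ko\l odziej $L^\infty$ estimate immediately yields $\|\varphi\|_0$, accounting for the ``in particular'' clause. The hard part is clearly the first stage of the reverse direction: there is no direct pointwise maximum-principle route to control $F$ from only $\int_M F e^F\,\omega^n$, since $\Delta_\varphi F = -\underline{R}+\mathrm{tr}_\varphi\mathrm{Ric}$ is not controlled by the entropy alone, and in particular the one-sided estimates from Section~2 only run in the direction of controlling $|\nabla\varphi|^2/e^F$ by $\|\varphi\|_0$, not the other way. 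The test function must therefore be engineered so that the unknown pointwise behaviour of $\varphi$ is absorbed precisely by the $\alpha$-invariant integral; designing and closing this loop is where the novelty of Section~5 lies.
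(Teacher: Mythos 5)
Your forward direction matches the paper's argument essentially verbatim: the Berman--Berndtsson minimization result gives an upper bound on the K-energy, the Chen decomposition $K(\varphi)=\int_M F e^F\,\omega^n/n! + J_{-\mathrm{Ric}}(\varphi)$ isolates the entropy, and the bound on $J_{-\mathrm{Ric}}$ in terms of $\|\varphi\|_0$ finishes the claim (the paper does not even bother writing $K(0)=0$; it simply notes $K(\varphi)$ is bounded from above). This half is fine.

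The reverse direction, however, has a genuine gap. You correctly identify the two-stage structure (Theorems~\ref{t1.3new} and \ref{t1.4new}) and the role of the $\alpha$-invariant, but your description of the mechanism does not match what actually makes the paper's argument close, and it is not clear your version would close at all. The indispensable ingredient you omit is the \emph{auxiliary K\"ahler potential} $\psi$: by Yau's theorem one solves a second Monge--Amp\`ere equation
\begin{equation*}
\det(g_{i\bar j}+\psi_{i\bar j})=\frac{e^F\Phi(F)\det g_{i\bar j}}{\int_M e^F\Phi(F)\,dvol_g},\qquad \sup_M\psi=0,
\end{equation*}
and the test function in Theorem~\ref{t5.2} is $e^{\delta(F+\eps\psi-\lambda\varphi)}$ --- it mixes $F$, $\psi$, \emph{and} $\varphi$. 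The term coming from $\Delta_\varphi\psi$ is precisely what injects the coercive factor $\Phi^{1/n}(F)$ into the differential inequality, and it is this factor which (after applying the Alexandrov--Bakelman--Pucci maximum principle, not Jensen's inequality, to $e^{\delta(F+\eps\psi-\lambda\varphi)}\eta_{p_0}$ on a small ball) confines the ``bad set'' to $\{F\le C\}$, making the $\alpha$-invariant integrals for \emph{both} $\psi$ and $\varphi$ finite. Without $\psi$ there is no coercive term, and a test function ``mixing $F$ and $\varphi$'' alone does not produce a usable differential inequality. Finally, your proposed Stage two (a Moser-type iteration to pass from $\|e^F\|_{L^p}$ to $\|e^F\|_{L^\infty}$) is not what the paper does and is not obviously runnable: there is no elliptic inequality for $F$ with coefficients controlled solely by $\|e^F\|_{L^p}$ to iterate on. Instead, the $L^\infty$ bound on $F$ falls out directly from the same pointwise estimate $F+\eps\psi-\lambda\varphi\le C_{5.1}$ of Theorem~\ref{t5.2} once Corollary~\ref{c5.3} has bounded $\|\psi\|_0$ and $\|\varphi\|_0$ (with the lower bound on $F$ coming from Proposition~\ref{p2.1}). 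The Yau--Ko\l odziej/Blocki step from $L^p$ control of $e^F$ to $\|\varphi\|_0$ you state correctly.
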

The most difficult part of above theorem is to show that an upper bound for $\int_Me^FFdvol_g$ implies a bound on $||\varphi||_0$ and $||F||_0$, which is the main focus of this section. That $||\varphi||_0$ implies a bound for $\int_Me^FFdvol_g$ essentially follows from the fact that cscK are minimizers of $K$-energy. In particular, having a bound on $||\varphi||_0$ is enough to control $||F||_0$, hence estimates up to $C^{1,1}$, thanks to the results obtained in previous sections.
Actually we will see it is enough to have a bound for $\int_Me^F\Phi(F)dvol_g$, where $ \Phi(F)> 0 $ is coercive in $F$ in the sense that
\begin{enumerate}
\item  $\displaystyle \lim_{t\rightarrow -\infty }\;e^t \cdot \Phi(t) =  0$ and    $\displaystyle \lim_{t\rightarrow \infty}\;\Phi(t) = \infty$ 
\item $\displaystyle \lim_{t\rightarrow \infty} \;{{\Phi(t)}\over t} < \infty.\;$  
\end{enumerate}

We want to show that, under these conditions, an upper bound for $\int_Me^F\Phi(F)dvol_g$ will imply a bound for $\int_Me^{qF}dvol_g$ for any $q<\infty$.
This bound can then imply a bound for $||\varphi||_0$, due to the deep result by Kolodziej, \cite{Kolo98}, but an elementary argument which only uses Alexandrov maximum principle (Lemma \ref{abp}) and avoids pluripotential theory is also possible. 
This argument is due to Blocki (c.f. \cite{Blocki}).
From Corollary \ref{c5.4}, we obtain a bound for $||e^F||_0$. We have also shown in Proposition \ref{p2.1} that a $C^0$ bound of $\varphi$ will imply a lower bound for $F$.
Hence a bound for $||F||_0$ can be obtained this way. Then estimates in previous sections can be applied to obtain higher derivatives bound.

Define 
\begin{equation}
P(M,g)=\{\phi\in C^2(M,\mathbb{R}):g_{i\bar{j}}+\frac{\partial^2\phi}{\partial z_i\partial\bar{z}_j}\geq0,\,\sup_M\phi=0\}.
\end{equation}

The following result of Tian is well-known, whose proof may be found in \cite{tian87}, Proposition 2.1:
\begin{prop}\label{p5.2}
There exists two positive constant $\alpha$, $C_5$, depending only on $(M,g)$, such that
\begin{equation}
\int_Me^{-\alpha\phi}dvol_g\leq C_5\textrm{, for any $\phi\in P(M,g)$.}
\end{equation}
\end{prop}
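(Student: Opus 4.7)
The plan is to reduce the global estimate to local exponential integrability of plurisubharmonic functions, combined with a uniform $L^1$ bound on $\phi$ coming from Green's representation.

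First I would establish a uniform bound $\int_M |\phi|\, dvol_g \leq K_0$ depending only on $(M,g)$. Since $\omega+i\partial\bar\partial\phi \geq 0$ gives $\Delta_g \phi \geq -n$, and since $\sup_M\phi = 0$, the Green's function representation
\[
\phi(x) = \frac{1}{vol(M)}\int_M \phi\, dvol_g - \int_M G(x,y)\Delta_g\phi(y)\,dvol_g(y)
\]
together with the uniform lower bound of $G$ gives $0 = \sup_M\phi \leq \frac{1}{vol(M)}\int_M \phi + C$, so $\int_M \phi \geq -C\cdot vol(M)$; combined with $\phi \leq 0$ this is the desired $L^1$ bound. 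Next, cover $M$ by finitely many coordinate charts $\{U_\alpha\}$, each biholomorphic to a ball, on which $\omega = i\partial\bar\partial h_\alpha$ for smooth local potentials $h_\alpha$ of uniformly bounded $C^0$ norm. Then $\psi_\alpha := \phi + h_\alpha$ is plurisubharmonic on $U_\alpha$ and inherits a uniform $L^1$ bound from the previous step.

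The core ingredient is now the following local fact: there exist constants $\alpha_0, C_0 > 0$, depending only on $K$ and the geometry of the balls, such that every plurisubharmonic function $u$ on a ball $B_2 \subset \mathbb{C}^n$ with $\|u\|_{L^1(B_2)} \leq K$ and $\sup_{B_2} u \leq 0$ satisfies $\int_{B_1} e^{-\alpha_0 u}\, dV \leq C_0$. Granting this, if $\{U'_\alpha\} \Subset \{U_\alpha\}$ is a refined subcover of $M$, then
\[
\int_M e^{-\alpha_0 \phi}\, dvol_g \leq \sum_\alpha e^{\alpha_0 \|h_\alpha\|_0}\int_{U'_\alpha} e^{-\alpha_0 \psi_\alpha}\, dV \leq C_5,
\]
which is the claimed estimate.

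The main obstacle is the local exponential integrability, which is the heart of the $\alpha$-invariant theory. The cleanest argument is via Skoda's theorem: a family of PSH functions uniformly bounded in $L^1_{loc}$ has uniformly bounded Lelong numbers, and Skoda's integrability result then gives a uniform $\alpha_0$ and $C_0$. Alternatively one can proceed by a compactness/normal-families argument: if the asserted uniform bound fails, extract a sequence $u_k$ of PSH functions with $\int e^{-\alpha_0 u_k} \to \infty$ for every $\alpha_0$; the $L^1_{loc}$ compactness of PSH functions produces a limit $u_\infty$ (either a PSH function or identically $-\infty$), and in either case one derives a contradiction with the $L^1$ bound. Either route requires some pluripotential or $L^2$-$\bar\partial$ input, and this is the single nontrivial analytic step of the argument.
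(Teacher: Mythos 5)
The paper does not prove this proposition --- it is cited verbatim as Tian's result (\cite{tian87}, Proposition 2.1) and used as a black box. Your sketch is a correct rendering of the standard argument, which is essentially Tian's: (i) the Green's function step is right, since $G\geq -A$ and $\int_M G(x_0,\cdot)\,dV=0$ give $-\int G(x_0,\cdot)\Delta\phi\leq -\int (G+A)(-n)=nA\,\mathrm{vol}(M)$ at the point $x_0$ where $\phi=0$, hence $\bar\phi\geq -nA\,\mathrm{vol}(M)$ and the uniform $L^1$ bound; (ii) localizing $\psi_\alpha=\phi+h_\alpha$ on a finite chart cover is routine; and (iii) the exponential integrability of a normalized family of PSH functions with uniform $L^1$ bound is the single real analytic input, which you correctly identify and which can indeed be supplied either by H\"ormander's lemma (Tian's route), by Skoda's Lelong-number bound, or by an $L^1_{\mathrm{loc}}$-compactness argument. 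Since the paper only cites the result, there is nothing in the paper's own proof for your argument to deviate from; you have supplied the standard proof. One small caution: in the compactness variant, the contradiction hypothesis should be formulated via a diagonal sequence (fix a sequence $\alpha_k\downarrow 0$ and $u_k$ with $\int e^{-\alpha_k u_k}>k$) rather than ``$\int e^{-\alpha_0 u_k}\to\infty$ for every $\alpha_0$,'' and the passage to the limit requires a lower-semicontinuity argument (Fatou plus Hartogs-type convergence of PSH functions) rather than being automatic; both issues are resolvable but do need to be addressed if that route is taken in full.
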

Here $\alpha = \alpha(M, [\omega])$ is the so called $\alpha$-invariant. To start, we normalize $\varphi$ so that $\sup_M\varphi=0$.
We also need to consider the auxiliary K\"ahler potential $\psi\in \mathcal H$, which solves the following problem:

\begin{align}
\label{eq1}
&\det(g_{i\bar{j}}+\psi_{i\bar{j}})=\frac{e^F\Phi(F)\det(g_{i\bar{j}})}{\int_Me^F\Phi(F)dvol_g},\\
\label{sup}
&\sup_M\psi=0.
\end{align}
The existence of such $\psi$ follows from Yau's celebrated theorem on Calabi's volume conjecture (c.f. \cite{Yau78}, Theorem 2) .  Because of Proposition \ref{p5.2}, we know that $$\int_Me^{-\alpha\varphi}dvol_g\leq C_5,\qquad \int_Me^{-\alpha\psi}dvol_g\leq C_5.$$

We will show that the following estimate holds:
\begin{thm}\label{t5.2}
Given any $0<\eps<1$, there exists a constant $C_{5.1}$, depending on $\eps$, the background metric $g$, the choice of $\Phi$, and the bound $\int_Me^F\Phi(F)dvol_g$, such that
\begin{equation}
F+\eps\psi-2(1+\max_M|Ric|)\varphi\leq C_{5.1}.
\end{equation} 
\end{thm}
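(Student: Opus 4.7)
The plan is to apply the maximum principle to the test function $H := F + \eps\psi - \lambda\varphi$ with $\lambda := 2(1+\max_M|\mathrm{Ric}|)$, and to extract an upper bound on $\sup_M H$ from the analysis at a maximum point. Let $p_0\in M$ be a point at which $H$ attains its maximum. Using the cscK equation $\Delta_\varphi F = -\underline R + \mathrm{tr}_\varphi \mathrm{Ric}$, the identity $\Delta_\varphi\psi = \mathrm{tr}_\varphi\omega_\psi - \mathrm{tr}_\varphi\omega_0$, and $\Delta_\varphi\varphi = n - \mathrm{tr}_\varphi\omega_0$, I would compute
\[
\Delta_\varphi H = -\underline R - \lambda n + \mathrm{tr}_\varphi\bigl[\mathrm{Ric} + (\lambda-\eps)\omega_0\bigr] + \eps\,\mathrm{tr}_\varphi\omega_\psi.
\]
The choice of $\lambda$ guarantees $\mathrm{Ric}+(\lambda-\eps)\omega_0 \geq \omega_0$ for every $\eps\in(0,1)$, so globally $\Delta_\varphi H \geq -C_0 + \mathrm{tr}_\varphi\omega_0 + \eps\,\mathrm{tr}_\varphi\omega_\psi$, with $C_0$ depending only on $n$, $\underline R$, and $\max_M|\mathrm{Ric}|$.

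Next, I would apply the AM--GM inequality to both trace terms using the Monge--Amp\`ere identities (\ref{csck1}) and (\ref{eq1}): $\mathrm{tr}_\varphi\omega_0\geq n\,e^{-F/n}$ and $\mathrm{tr}_\varphi\omega_\psi \geq n(\Phi(F)/A)^{1/n}$, where $A := \int_M e^F\Phi(F)\,dvol_g$. Since $\Delta_\varphi H(p_0)\leq 0$, both summands are individually controlled at $p_0$, which yields simultaneously $F(p_0)\geq -C_1$ and $\Phi(F(p_0)) \leq C_2 A/\eps^n$. The coercivity condition $\lim_{t\to\infty}\Phi(t)=\infty$ then converts the latter into an upper bound $F(p_0)\leq C_3(\eps,\Phi,A)$, so $F(p_0)$ is controlled from both sides.

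The main obstacle will be to transfer the pointwise bound on $F(p_0)$ to the full quantity $H(p_0) = F(p_0) + \eps\psi(p_0) - \lambda\varphi(p_0)$: the summand $\eps\psi(p_0)\leq 0$ causes no difficulty, but $-\lambda\varphi(p_0)\geq 0$ is precisely the quantity we are trying to bound. To close this gap I would invoke the $\alpha$-invariant from Proposition \ref{p5.2} applied to both $\varphi$ and $\psi$, together with the auxiliary equation (\ref{eq1}). The bounds $\mathrm{tr}_\varphi\omega_0(p_0)\leq C$ and $|F(p_0)|\leq C$ already imply that at $p_0$ the eigenvalues of $\omega_\varphi$ relative to $\omega_0$ lie in a fixed compact interval, so $\omega_\varphi$ and $\omega_0$ are uniformly comparable in a neighborhood of $p_0$. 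Combining this local control with the integral estimates $\int_M e^{-\alpha\varphi}\,dvol_g,\ \int_M e^{-\alpha\psi}\,dvol_g \leq C_5$ and the normalization $\sup_M\varphi=0$, a sub-mean-value/comparison argument should provide the desired uniform upper bound on $-\lambda\varphi(p_0)$, completing the estimate on $H(p_0)$ with a constant of the claimed dependence on $\eps$, $g$, $\Phi$, and $A$.
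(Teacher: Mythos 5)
Your computation of $\Delta_\varphi H$ is correct, the choice of $\lambda$ is the paper's, and your pointwise maximum principle at $p_0$ does give, exactly as you say, both $F(p_0)\geq -C_1$ and (via coercivity of $\Phi$) $F(p_0)\leq C_3$. You also correctly diagnose the obstacle: this says nothing about $-\lambda\varphi(p_0)$, which is precisely the quantity that needs to be bounded.

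The proposed resolution, however, has a genuine gap. The step ``$\mathrm{tr}_\varphi\omega_0(p_0)\leq C$ and $|F(p_0)|\leq C$ imply $\omega_\varphi$ and $\omega_0$ are uniformly comparable \emph{in a neighborhood} of $p_0$'' is false: control of the eigenvalues of $\omega_\varphi$ at the single point $p_0$ gives no control in any fixed-size ball around $p_0$ without an a priori modulus of continuity for $\omega_\varphi$, which is not available (and is essentially what the whole paper is trying to establish). Without neighborhood control of $\mathrm{tr}_0\omega_\varphi$ you cannot run a sub-mean-value argument to propagate the integral estimate $\int_M e^{-\alpha\varphi}\leq C_5$ into a pointwise lower bound on $\varphi(p_0)$; and if you instead try to invoke the conclusion $\|\varphi\|_0\leq C$, you become circular, since that bound is Corollary \ref{c5.3}, which is \emph{derived from} Theorem \ref{t5.2}. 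So the pointwise maximum principle dead-ends here.

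The paper's proof takes a materially different route that sidesteps this exactly. It sets $u=e^{\delta(F+\eps\psi-\lambda\varphi)}$, multiplies by a cutoff $\eta_{p_0}$, and applies the Alexandrov--Bakelman--Pucci estimate (Lemma \ref{abp}) on a fixed ball $B_{d_0}(p_0)$ rather than evaluating $\Delta_\varphi$ at the single point $p_0$. The ABP estimate replaces the pointwise information ``$\Delta_\varphi(u\eta_{p_0})\leq 0$ at $p_0$'' with an $L^{2n}$ integral of the negative part of the forcing, and this is where the two crucial ideas you are missing enter: (i) that forcing is negative only on the set $\{F\leq C_{5.3}\}$ (by coercivity of $\Phi$, same mechanism as your pointwise bound), and (ii) on that set the ABP integrand $u^{2n}e^{2F}$ is dominated by a constant times $e^{-2n\delta\lambda\varphi}$, and the exponent $\delta$ is chosen so that $2n\delta\lambda=\alpha$, which makes this exactly $e^{-\alpha\varphi}$, controlled by Tian's $\alpha$-invariant (Proposition \ref{p5.2}). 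The cutoff then converts the ABP bound into $\sup_M u\leq(1-\theta)\sup_M u+C$, closing the argument. So the ABP machinery, together with the specific tuning of $\delta$ to the $\alpha$-invariant, is the essential ingredient your proposal lacks and cannot replace with a pointwise maximum principle.
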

\begin{cor}\label{c5.3}
For any $0<q<\infty$, there exists a constant $C_{5.2}$, depending only on the background metric $g$, the choice of $\Phi$, the bound $\int_Me^F\Phi(F)dvol_g$, and $q$, such that 
\begin{equation}
\int_Me^{qF}dvol_g\leq C_{5.2}, \textrm{ $||\varphi||_0\leq C_{5.2}$, $||\psi||_0\leq C_{5.2}.$}
\end{equation}
\end{cor}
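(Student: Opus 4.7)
The plan is to derive Corollary \ref{c5.3} directly from Theorem \ref{t5.2} together with Tian's $\alpha$-invariant estimate (Proposition \ref{p5.2}) and Kolodziej-type $L^\infty$ estimates for the complex Monge-Amp\`ere equation. The whole argument should be short since the heavy lifting is already in Theorem \ref{t5.2}.

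First I would use Theorem \ref{t5.2}: since $\sup_M\varphi=0$ implies $\varphi\leq 0$, the inequality
\[
F+\eps\psi-2(1+\max_M|Ric|)\varphi\leq C_{5.1}
\]
yields $F\leq C_{5.1}-\eps\psi$ pointwise. Given a target exponent $q\in(0,\infty)$, I would choose $\eps=\eps(q)\in(0,1)$ small enough that $q\eps<\alpha$, where $\alpha$ is Tian's $\alpha$-invariant for $(M,g)$ from Proposition \ref{p5.2}. Applying Theorem \ref{t5.2} with this $\eps$ and exponentiating gives
\[
\int_M e^{qF}\,dvol_g \leq e^{qC_{5.1}}\int_M e^{-q\eps\psi}\,dvol_g \leq e^{qC_{5.1}}\,C_5,
\]
since $\psi\in P(M,g)$. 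This establishes the first estimate, with $C_{5.2}$ depending on $q$, the entropy bound (via $C_{5.1}$), and $(M,g)$.

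Next, for the $C^0$ bounds on the potentials, I would feed this $L^q$ control of the volume ratio into a Kolodziej-type estimate (alternatively the Alexandrov/Blocki argument referenced above Proposition \ref{p5.2}). The Monge-Amp\`ere equation (\ref{csck1}) has right-hand side $e^F\in L^q(dvol_g)$ for every $q<\infty$, so $\|\varphi\|_0$ is bounded in terms of any such $L^q$ norm, giving the desired bound on $\|\varphi\|_0$. For $\psi$, the equation (\ref{eq1}) has right-hand side proportional to $e^F\Phi(F)$; using the coercivity hypothesis $\lim_{t\to\infty}\Phi(t)/t<\infty$ and $\lim_{t\to-\infty}e^t\Phi(t)=0$, one has $e^F\Phi(F)\leq C_\delta\,e^{(1+\delta)F}$ pointwise for any $\delta>0$, hence $e^F\Phi(F)\in L^{q}(dvol_g)$ for every $q<\infty$ by the first part of the proof applied with exponent $q(1+\delta)$. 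Kolodziej's theorem again gives $\|\psi\|_0\leq C_{5.2}$.

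The only subtle point is the interaction between the choice of $\eps$ and the $\alpha$-invariant: one must verify that $\eps$ can be chosen freely small in $(0,1)$ in Theorem \ref{t5.2} so that $q\eps<\alpha$ is achievable for every $q$, and that the resulting constant $C_{5.1}=C_{5.1}(\eps)$ stays finite for each such $\eps$. Granted this, the rest is a routine two-step bootstrap: entropy bound $\Rightarrow$ $L^q$ bound on $e^F$ $\Rightarrow$ $L^\infty$ bounds on $\varphi$ and $\psi$ via pluripotential theory. I do not anticipate a genuine obstacle beyond being careful with the dependence of constants on $q$ and $\eps(q)$.
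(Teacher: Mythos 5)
Your argument is essentially the paper's: from Theorem~\ref{t5.2} together with $\varphi\le 0$ one gets $F\le C_{5.1}-\eps\psi$, Tian's $\alpha$-invariant (Proposition~\ref{p5.2}) applied to $\psi\in P(M,g)$ with $\eps$ chosen so that $q\eps\le\alpha$ gives the $L^q$ bound on $e^F$ (the paper sets $\eps=\alpha/q$ exactly, you take $q\eps<\alpha$; both work, and for small $q$ one dominates by a larger exponent), and the $C^0$ bounds on $\varphi$ and $\psi$ then follow from the Blocki/Alexandrov $L^\infty$ estimate (Lemma~\ref{lem5.3}) applied to equations (\ref{csck1}) and (\ref{eq1}).

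One small repair: the pointwise inequality $e^F\Phi(F)\le C_\delta e^{(1+\delta)F}$ is not true as $F\to-\infty$ whenever $\Phi(t)\to\infty$ as $t\to-\infty$ --- which the hypotheses allow, and which indeed happens for $\Phi(t)=\sqrt{t^2+1}$ used in Corollary~\ref{c5.4} (it would require $\Phi(t)\le C_\delta e^{\delta t}\to 0$). The coercivity hypothesis $e^t\Phi(t)\to 0$ only says $e^F\Phi(F)$ is bounded on $\{F\le 0\}$; on $\{F\ge 0\}$ the bound $\Phi(t)\lesssim t\lesssim e^{\delta t}$ does hold. So the correct statement is $e^F\Phi(F)\le C_\delta(1+e^{(1+\delta)F})$, which still gives $e^F\Phi(F)\in L^q(dvol_g)$ for every $q<\infty$ from the first part of your argument, and hence the $||\psi||_0$ bound. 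The conclusion is unaffected; just fix the stated inequality.
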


We will show this important corollary first. 

\begin{proof}
First we derive the estimate for $\int_Me^{qF}dvol_g$ with $q>1$.

From Theorem \ref{t5.2}, we know 
\begin{equation}
-\alpha\psi\geq\frac{\alpha}{\eps}\big(F-2(1+\max_M|Ric|)\varphi-C_{5.1}\big).
\end{equation}
hence
\begin{equation}
\begin{split}
C_5\geq \int_Me^{-\alpha\psi}dvol_g\geq&\int_M \exp\big(\frac{\alpha}{\eps}(F-2(1+\max_M|Ric|)\varphi-C_{5.1})\big)dvol_g\\
&\geq\int_M\exp\big(\frac{\alpha}{\eps}(F-C_{5.1})\big)dvol_g.
\end{split}
\end{equation}
The last inequality holds because we normalized $\varphi$ so that $\varphi\leq0$. Choose $\eps=\frac{\alpha}{q}$, then we immediately get the desired estimate for $\int_Me^{qF}dvol_g$.
The claimed estimate for $\varphi$ and $\psi$ immediately follows from the estimate for $||e^F||_{L^q} (q>2) $, given in the lemma below.   \end{proof}

\begin{lem} \label{lem5.3}Let  $\phi \in P(M,g)$ be such that $e^F = {\omega_\phi^n \over \omega_0^n} $ with  $e^F\in L^{2+s} (M, \omega_0),\;$ for some $s>0$. Then $||\phi||_0\leq C_{5.21}$, with $C_{5.21}$ depending only on the metric $\omega_0$, $s>0$ and $||e^F||_{L^{2+s}(M,\omega_0)}$.
\end{lem}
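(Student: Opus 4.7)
The statement is essentially Kolodziej's $L^\infty$ estimate for the complex Monge-Amp\`ere equation, specialised to the K\"ahler setting. Under the hypothesis $e^F \in L^{2+s}$ with $s>0$ (strictly stronger than Kolodziej's full range $p>1$), the proof can be carried out elementarily via the complex Alexandrov-Bakelman-Pucci (ABP) maximum principle of Blocki, as the surrounding text indicates, without invoking pluripotential theory.

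The plan is as follows. First normalise $\sup_M \phi = 0$, set $L := -\inf_M \phi \geq 0$, and pick $p_0 \in M$ with $\phi(p_0) = -L$; the goal is to bound $L$. Cover $M$ by finitely many coordinate charts on each of which $\omega_0 = i\partial\bar\partial\rho$ for a smooth strictly psh $\rho$ with uniform $C^2$ bounds, and choose a chart containing a ball $B = B_{r_0}(0) \subset \mathbb{C}^n$ around $p_0$ of a fixed radius depending only on $\omega_0$, with $\rho(0)=0$, $d\rho(0)=0$ and $c|z|^2 \leq \rho \leq C|z|^2$. Then $u := \phi + \rho + L$ is plurisubharmonic on $B$, nonnegative with $u(0)=0$, and
\begin{equation*}
\det(u_{i\bar j}) = e^F \det(g_{i\bar j}) \in L^{2+s}(B, dV)
\end{equation*}
with norm controlled by $\|e^F\|_{L^{2+s}(M,\omega_0)}$. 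Next I would construct a barrier $v$ (a suitable shift of $u$) with $v \geq 0$ on $\partial B$ and $\inf_B v \leq -cL$, and apply Blocki's complex ABP: for such $v$,
\begin{equation*}
|\inf_B v|^{2n} \leq C(n)\, r_0^{2n} \int_{\{v = \Gamma_v\}} \det(v_{i\bar j})^{2} \, dV,
\end{equation*}
where $\Gamma_v$ is the real convex envelope of $v$ on $\mathbb{R}^{2n}$; the exponent $2$ on $\det v_{i\bar j}$ comes from the pointwise inequality $\det D^2 v \leq C_n (\det v_{i\bar j})^2$, valid on the contact set where $v$ is convex as a real function. Finally, by H\"older with exponents $(2+s)/2$ and $(2+s)/s$,
\begin{equation*}
\int_{\{v = \Gamma_v\}} \det(v_{i\bar j})^{2} \, dV \leq |B|^{s/(2+s)} \bigl\|\det u_{i\bar j}\bigr\|_{L^{2+s}(B)}^{2} \leq C \|e^F\|_{L^{2+s}(M,\omega_0)}^{2},
\end{equation*}
and combining with the ABP bound yields $L^{2n} \leq C(n, r_0, s)\, \|e^F\|_{L^{2+s}(M,\omega_0)}^{2}$, whence $L \leq C$.

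The main obstacle is the construction and analysis of the barrier in the second step. A naive explicit choice such as $v(z) := u(z) + \tfrac{L}{2 r_0^2}|z|^2 - \tfrac{L}{2}$ makes $\det(v_{i\bar j})$ blow up with $L$ like $(L/r_0^2)^n$ and ruins the final estimate. The resolution, following Blocki, is to work directly with the convex envelope $\Gamma_u$ of $u$ rather than an explicit test function: on the contact set $\{u = \Gamma_u\}$ the real Hessian of $u$ is automatically nonnegative, so $\det D^2 u \leq C_n (\det u_{i\bar j})^2$ becomes available and the complex Monge-Amp\`ere mass dominates the real Monge-Amp\`ere mass appearing in the classical ABP. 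The hypothesis $s>0$, i.e.\ $p = 2+s > 2$, is precisely the H\"older exponent required to absorb the quadratic $\det(v_{i\bar j})^2$ against the $L^{2+s}$ control; this is exactly why the elementary argument handles $p>2$ but for $1 < p \leq 2$ one is forced back to Kolodziej's capacity estimates.
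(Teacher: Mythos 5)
There is a genuine gap in the barrier step, and it makes the final H\"older step vacuous. With $\rho(0)=0$, $d\rho(0)=0$ and $c|z|^2\leq\rho\leq C|z|^2$, the function $u=\phi+\rho+L$ does satisfy $u\geq 0$, $u(0)=0$ and $u\geq c\,r_0^2$ on $\partial B_{r_0}$, but there is no lower bound of order $L$ on $u|_{\partial B_{r_0}}$: one may have $\phi\approx -L$ along the entire sphere $\partial B_{r_0}$, in which case $u\approx\rho\leq C r_0^2$ there. Consequently no constant shift of $u$ (nor an affine shift, which leaves $u_{i\bar j}$ unchanged) can satisfy both $v\geq 0$ on $\partial B$ and $\inf_B v\leq -cL$; the best achievable is $\inf_B v\geq -Cr_0^2$, a fixed constant independent of $L$. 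The naive choice $v=u+\frac{L}{2r_0^2}|z|^2-\frac{L}{2}$ that you dismiss is the only way to force depth $\sim L$ with nonnegative boundary data, and the $O(L/r_0^2)$ addition to $v_{i\bar j}$ is therefore unavoidable; passing to the convex envelope does not change boundary data and cannot recover the lost depth. With $|\inf v|$ only of order $r_0^2$, ABP yields $r_0^{4n}\lesssim r_0^{2n}\int_B(\det u_{i\bar j})^2$, which is consistent for every $L$ and carries no information.

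The resolution in Blocki's argument --- and the only place where $s>0$ is genuinely used --- is to apply ABP on a sublevel set rather than on the fixed ball, together with a quantitative volume bound on sublevel sets, e.g.\ from the $\alpha$-invariant (Proposition~\ref{p5.2}). Since $u\geq c\,r_0^2$ on $\partial B_{r_0}$, the open set $\Omega_a:=\{u<a\}\cap B_{r_0}$ is compactly contained in $B_{r_0}$ for any fixed $0<a<c\,r_0^2$; with $v=u-a$, one has $v=0$ on $\partial\Omega_a$ and $\inf_{\Omega_a} v\leq -a$. ABP on $\Omega_a$ gives $a^{2n}\leq C r_0^{2n}\int_{\Omega_a}(\det u_{i\bar j})^2\,dV$. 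Now $\Omega_a\subset\{\phi<a-L\}$ because $\rho\geq 0$, and $\int_M e^{-\alpha\phi}\,dvol_g\leq C_5$ forces $|\{\phi<a-L\}|\leq C_5 e^{-\alpha(L-a)}$. H\"older with exponents $\frac{2+s}{2}$ and $\frac{2+s}{s}$ applied \emph{on $\Omega_a$} then gives
\begin{equation*}
\int_{\Omega_a}(\det u_{i\bar j})^2\,dV\leq C\,\|e^F\|_{L^{2+s}}^2\,|\{\phi<a-L\}|^{s/(2+s)}\leq C\,\|e^F\|_{L^{2+s}}^2\,e^{-\alpha s(L-a)/(2+s)},
\end{equation*}
which decays as $L\to\infty$ for fixed $a$ and so yields the bound on $L$. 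Your version applies H\"older on the fixed ball $B$, so the factor $|B|^{s/(2+s)}$ is just a constant and the hypothesis $s>0$ gains nothing over $s=0$; the claimed inequality $L^{2n}\leq C\|e^F\|_{L^{2+s}}^2$ does not follow, because you never bring in the decay of the sublevel-set measure, which is where $p>2$ actually bites.
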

Note that this is a weaker result compared to  the famous theorem of Kolodziej \cite{Kolo98}, which shows $e^F\in L^{1+s}(M,\omega_0)$ is already sufficient. However, the weaker result as stated above can be proved in an elementary way using Alexandrov maximum principle, discovered by Blocki \cite{Blocki}.

Combining Theorem \ref{t5.2} and Corollary \ref{c5.3}, we immediately conclude:
\begin{cor}\label{c5.4}
There exists a constant $C_{5.2}$, depending only on the background metric $g$, the upper bound of $\int_Me^FFdvol_g$, such that 
$$
F\leq C_{5.2}.
$$
\end{cor}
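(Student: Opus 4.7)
The plan is to combine Theorem \ref{t5.2} with Corollary \ref{c5.3}: the former gives a pointwise inequality of the form $F \leq C_{5.1} - \eps \psi + 2(1+\max_M|Ric|)\varphi$, while the latter provides $C^0$ bounds on both $\varphi$ and $\psi$. Once these two ingredients are in hand, the upper bound on $F$ is just a matter of substitution.

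First I would select a coercive function $\Phi$ so that an upper bound on $\int_M e^F F\,dvol_g$ implies an upper bound on $\int_M e^F\Phi(F)\,dvol_g$. A convenient choice is $\Phi(t) = \max(t,1) + 1$, or any smooth modification that makes $\Phi$ positive, coercive as $t\to\infty$ of linear growth, and satisfies $\lim_{t\to-\infty} e^t\Phi(t)=0$. Since $xe^x$ attains its minimum $-e^{-1}$, the hypothesis $\int_M e^F F\,dvol_g \leq C$ immediately translates into a bound on $\int_M e^F \Phi(F)\,dvol_g$, using only $vol_g(M)$ as additional data. This step is entirely routine.

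Next, with this $\Phi$ fixed, Corollary \ref{c5.3} applies and yields $\|\varphi\|_0 \leq C_{5.2}$, $\|\psi\|_0 \leq C_{5.2}$, together with $\int_M e^{qF}\,dvol_g \leq C_{5.2}$ for every $q < \infty$ (the latter we don't actually need here). Then Theorem \ref{t5.2}, applied with any fixed $\eps \in (0,1)$, gives
\begin{equation*}
F \leq C_{5.1} - \eps \psi + 2\bigl(1+\max_M |Ric|\bigr)\varphi.
\end{equation*}
Because of the normalization $\sup_M \varphi = 0$, the last term is non-positive, and $-\eps\psi$ is controlled by $\eps \|\psi\|_0 \leq \eps C_{5.2}$. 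Combining these bounds yields $F \leq C_{5.1} + \eps C_{5.2}$, which is the desired uniform upper bound.

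The only subtle point is to keep the dependencies clean: one must verify that the constant obtained from Theorem \ref{t5.2} depends only on the stated data, namely $(M,g)$, the choice of $\Phi$ (which is now fixed once and for all), and the upper bound on $\int_M e^F \Phi(F)\,dvol_g$, which in turn is controlled by $(M,g)$ and the upper bound on $\int_M e^F F\,dvol_g$. No obstacle arises here, since Corollary \ref{c5.3} and Theorem \ref{t5.2} both have been established with these precise dependencies. The main work of the section is already absorbed into Theorem \ref{t5.2} and Corollary \ref{c5.3}, so Corollary \ref{c5.4} is essentially a bookkeeping consequence rather than a new estimate.
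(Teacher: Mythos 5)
Your proof is correct and follows essentially the same route as the paper's: choose a coercive $\Phi$ of linear growth, reduce the hypothesis to a bound on $\int_M e^F\Phi(F)\,dvol_g$ using $xe^x\geq -e^{-1}$, and then conclude by combining Theorem~\ref{t5.2} with the $C^0$ bounds on $\varphi$ and $\psi$ from Corollary~\ref{c5.3}. The only (immaterial) difference is the choice of $\Phi$: the paper takes $\Phi(t)=\sqrt{t^2+1}$, whereas you take a smoothed version of $\max(t,1)+1$, both of which satisfy the required coercivity conditions.
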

\begin{proof}
Choose $\Phi(t)=\sqrt{t^2+1}$ and observe that $\int_Me^F\sqrt{F^2+1}dvol_g$ is controlled in terms of an upper bound of $\int_Me^FFdvol_g$. 
Then the result follows from Theorem \ref{t5.2} and Corollary \ref{c5.3}.
\end{proof}
Now let's prove Theorem \ref{t5.2}.
\begin{proof}
(of Theorem \ref{t5.2}) Let $0<\eps<1$ be given and fixed.
Let $d_0$ be chosen as in the proof of Corollary \ref{c5.3}.
 For any $p\in M$, let $\eta_p:M\rightarrow\mathbb{R}_+$ be a cut-off function such that $\eta_p(p)=1$, $\eta_p\equiv1-\theta$ outside the ball $B_{\frac{d_0}{2}}(p)$, 
with the estimate $|\nabla\eta_p|^2\leq\frac{4\theta^2}{d_0^2}$, $|\nabla^2\eta_p|\leq\frac{4\theta}{d_0^2}$.
Here $0<\theta<1$ is to be determined later.
Let $\delta>0$, $\lambda>0$ be constants to be determined.
Assume the function $e^{\delta(F+\eps\psi-\lambda\varphi)}$ achieves maximum at $p_0\in M$.
We now compute
\begin{equation}\label{5.31}
\begin{split}
&\Delta_{\varphi}\big(e^{\delta(F+\eps\psi-\lambda\varphi)}\eta_{p_0}\big)\\
&=\Delta_{\varphi}(e^{\delta(F+\eps\psi-\lambda\varphi)})\eta_{p_0}+e^{\delta(F+\eps\psi-\lambda\varphi)}\Delta_{\varphi}(\eta_{p_0})+e^{\delta(F+\eps\psi-\lambda\varphi)}2\delta\nabla_{\varphi}(F+\eps\psi-\lambda\varphi)\cdot\nabla_{\varphi}\eta_{p_0}\\
&=e^{\delta(F+\eps\psi-\lambda\varphi)}\eta_{p_0}\big(\delta^2|\nabla_{\varphi}(F+\eps\psi-\lambda\varphi)|_{\varphi}^2+\delta\Delta_{\varphi}(F+\eps\psi-\lambda\varphi)\big)\\
&\qquad\qquad\qquad+e^{\delta(F+\eps\psi-\lambda\varphi)}\Delta_{\varphi}(\eta_{p_0})+e^{\delta(F+\eps\psi-\lambda\varphi)}2\delta\nabla_{\varphi}(F+\eps\psi-\lambda\varphi)\cdot\nabla_{\varphi}\eta_{p_0}.
\end{split}
\end{equation}
First we can estimate
\begin{equation}\label{5.32}
\begin{split}
e^{\delta(F+\eps\psi-\lambda\varphi)}&\Delta_{\varphi}\eta_{p_0}\geq-e^{\delta(F+\eps\psi-\lambda\varphi)}|\nabla^2\eta_{p_0}|tr_{\varphi}g\\
&\geq -e^{\delta(F+\eps\psi-\lambda\varphi)}\frac{4\theta}{d_0^2(1-\theta)}\eta_{p_0}tr_{\varphi}g.
\end{split}
\end{equation}
\begin{equation}\label{5.33}
\begin{split}
&2\delta\nabla_{\varphi}(F+\eps\psi-\lambda\varphi)\cdot\nabla_{\varphi}\eta_{p_0}\geq -\delta^2\eta_{p_0}|\nabla_{\varphi}(F+\eps\psi-\lambda\varphi)|^2_{\varphi}-\frac{|\nabla_{\varphi}\eta_{p_0}|_{\varphi}^2}{\eta_{p_0}}\\
&\geq-\delta^2\eta_{p_0}|\nabla_{\varphi}(F+\eps\psi-\lambda\varphi)|_{\varphi}^2-\frac{|\nabla\eta_{p_0}|^2tr_{\varphi}g}{\eta_{p_0}}\\
&\geq-\delta^2\eta_{p_0}|\nabla_{\varphi}(F+\eps\psi-\lambda\varphi)|_{\varphi}^2-\frac{4\theta^2 tr_{\varphi}g}{d_0^2(1-\theta)}.
\end{split}
\end{equation}
Finally we compute
\begin{equation}\label{5.34}
\begin{split}
\Delta_{\varphi}(&F+\eps\psi-\lambda\varphi)=-(\underline{R}+\lambda n)+tr_{\varphi}Ric+\lambda tr_{\varphi}g+\eps\Delta_{\varphi}\psi\\
&\geq(-\underline{R}-\lambda n+\eps nA_{\Phi}^{-\frac{1}{n}}\Phi^{\frac{1}{n}}(F))+(\lambda-\eps-|Ric|)tr_{\varphi}g.
\end{split}
\end{equation}
Here $A_{\Phi}=\int_Me^F\Phi(F)dvol_g$.
In the above calculation, we noticed that
\begin{equation*}
\begin{split}
\Delta_{\varphi}\psi=g_{\varphi}^{i\bar{j}}&(g_{i\bar{j}}+\psi_{i\bar{j}})-tr_{\varphi}g\geq n\big(\det( g_{\varphi}^{i\bar{j}})\det(g_{i\bar{j}}+\psi_{i\bar{j}})\big)^{\frac{1}{n}}-tr_{\varphi}g\\
&= n(e^{-F}e^F\Phi(F)A_{\Phi}^{-1})^{\frac{1}{n}}-tr_{\varphi}g.
\end{split}
\end{equation*}
Plug (\ref{5.32}), (\ref{5.33}), (\ref{5.34}) back into (\ref{5.31}), we see
\begin{equation}\label{5.35}
\begin{split}
\Delta_{\varphi}&\big(e^{\delta(F+\eps\psi-\lambda\varphi)}\eta_{p_0}\big)\geq\delta\eta_{p_0} e^{\delta(F+\eps\psi-\lambda\varphi)}  (-\underline{R}-\lambda n+\eps nA_{\Phi}^{-\frac{1}{n}}\Phi^{\frac{1}{n}}(F))\\
&
-e^{\delta(F+\eps\psi-\lambda\varphi)} \big(\delta\eta_{p_0}(\lambda-\eps-|Ric|)-\frac{4\theta}{d_0^2(1-\theta)}\eta_{p_0}-\frac{4\theta^2}{d_0^2(1-\theta)^2}\big)tr_{\varphi}g.
\end{split}
\end{equation}
Now we choose various constants $\delta$, $\lambda$ and $\theta$ appearing above.

Since $0<\eps<1$, first we choose $\lambda=2(1+\max_M|Ric|)$.
Then we fix $\lambda$, and choose $\delta$ to be $2n\delta \lambda=\alpha$. We need to make sure the coefficient in front of $tr_{\varphi}g$ to be positive. This can be achieved by choosing $\theta$ to be sufficiently small. Indeed, with above choice of $\delta$ and $\lambda$, we may calculate:
\begin{equation}
\begin{split}
&\delta\eta_{p_0}(\lambda-\eps-|Ric|)-\frac{4\theta\eta_{p_0}}{d_0^2(1-\theta)}-\frac{4\theta^2}{d_0^2(1-\theta)^2}\\
&\geq\frac{1}{2}\delta(1-\theta)\lambda-\frac{4\theta\eta_{p_0}}{d_0^2(1-\theta)}-\frac{4\theta^2}{d_0^2(1-\theta)^2}\geq\frac{(1-\theta)\alpha}{4n}-\frac{4\theta}{d_0^2(1-\theta)}-\frac{4\theta^2}{d_0^2(1-\theta)^2}.
\end{split}
\end{equation}
Hence if we choose $\theta$ small enough, above $\geq0$.
After we made all the choices of $\delta$, $\lambda$, $\theta$, we obtain from (\ref{5.35}) that
\begin{equation}
\Delta_{\varphi}\big(e^{\delta(F+\eps\psi-\lambda\varphi)}\eta_{p_0})\geq\delta\eta_{p_0}e^{\delta(F+\eps\psi-\lambda\varphi)}(-\underline{R}-\lambda n+\eps nA_{\Phi}^{-\frac{1}{n}}\Phi^{\frac{1}{n}}(F)).
\end{equation}
Denote $u=e^{\delta(F+\eps\psi-\lambda\varphi)}$. Now we are ready to apply Alexandroff estimate in $B_{d_0}(p_0)$:
\begin{equation}\label{5.38}
\begin{split}
\sup_{B_{d_0}(p_0)}&u\eta_{p_0}\leq\sup_{\partial B_{d_0}(p_0)}u\eta_{p_0}\\
&+C_nd_0\bigg(\int_{B_{d_0}(p_0)}\frac{u^{2n}\big((-\underline{R}-\lambda n+\eps nA_{\Phi}^{-\frac{1}{n}}\Phi^{\frac{1}{n}}(F))^-\big)^{2n}}{e^{-2F}}dvol_g\bigg)^{\frac{1}{2n}}.
\end{split}
\end{equation}
We want to claim the integral appearing on the right hand side is bounded.
Indeed, the function been integrated is nonzero only if
$$
-\underline{R}-\lambda n+\eps nA_{\Phi}^{-\frac{1}{n}}\Phi^{\frac{1}{n}}(F)<0.
$$
By the coercivity of $\Phi$, this will imply an upper bound for $F$, say $F\leq C_{5.3}$, where the constant $C_{5.3}$ depends on $\eps$, the choice of $\Phi$, the integral bound $A_{\Phi}$, and the background metric $g$.
With this observation, we see
\begin{equation}
\begin{split}
&\int_{B_{d_0}(p_0)}\frac{u^{2n}\big((-\underline{R}-\lambda n+\eps nA_{\Phi}^{-\frac{1}{n}}\Phi^{\frac{1}{n}}(F))^-\big)^{2n}}{e^{-2F}}dvol_g\\
&\leq \int_{B_{d_0}(p_0)\cap\{F\leq C_{5.3}\}}e^{2n\delta(F+\eps\psi-\lambda\varphi)}e^{2F}(|\underline{R}|+\lambda n)^{2n}dvol_g\\
&\leq (|\underline{R}|+\lambda n)^{2n}e^{(2n\delta+2)C_{5.3}}\int_{B_{d_0}(p_0)}e^{2n\delta\eps\psi-2n\delta \lambda\varphi}dvol_g.
\end{split}
\end{equation}
But recall $\psi\leq 0$, and $2n\delta \lambda=\alpha$, we know 
\begin{equation}
\int_{B_{d_0}(p_0)}e^{2n\delta\eps\psi-2n\delta \lambda\varphi}dvol_g\leq\int_{B_{d_0}(p_0)}e^{-\alpha\varphi}dvol_g\leq C_{5.4}.
\end{equation}
Denote $I=(|\underline{R}|+\lambda n)^{2n}e^{(2n\delta+2)C_{5.3}}\int_{B_{d_0}(p_0)}e^{-\alpha\varphi}dvol_g$.
Now we go back to (\ref{5.38}) and obtain:
\begin{equation}
u(p_0)=\sup_Mu\leq (1-\theta)\sup_Mu+C_nd_0I^{\frac{1}{2n}}.
\end{equation}
Here we recall that $\eta_{p_0}\equiv1-\theta$ on $\partial B_{d_0}(p_0)$. This implies $\sup_Mu\leq \frac{C_nd_0I^{\frac{1}{2n}}}{\theta}$.
\end{proof}

\begin{lem}\label{abp}  {\bf Alexandroff maximum principle (c.f. \cite{GT}, Lemma 9.3})\\
Let $\Omega\subset\bR^d$ be a bounded domain. Suppose $u\in C^2(\Omega)\cap C(\bar{\Omega})$.
Denote $M=\sup_{\Omega}u-\sup_{\partial\Omega }u$.
Define
\begin{equation}\label{5.1nn}
\begin{split}
\Gamma^-(u,\Omega)=\{&x\in\Omega:u(y)\leq u(x)+\nabla u(x)\cdot (y-x),\\
&\quad\quad\quad\quad\,\,\textrm{for any $y\in\Omega$ and } |\nabla u(x)|\leq \frac{M}{3diam \Omega}\}.
\end{split}
\end{equation}
Then for some dimensional constant $C_d>0$:
$$
M\leq C_d\bigg(\int_{\Gamma^-(u,\Omega)}\det(-D^2u)dx\bigg)^{\frac{1}{d}}.
$$
In particular, suppose 
 $u$ satisfies $a_{ij}\partial_{ij}u\geq f$.
Here $a_{ij}$ satisfies the ellipticity condition $a_{ij}\xi_i\xi_j\geq0$.
Define $D^*=(\det a_{ij})^{\frac{1}{d}}$.
Then the following estimate holds:
\begin{equation}\label{2.61}
M\leq C_d'\,diam\,\Omega||\frac{f^-}{D^*}||_{L^d(\Omega)}.
\end{equation}
Here $C_d'$ is another dimensional constant. 
\end{lem}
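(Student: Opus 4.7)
The plan is to prove this by the classical Alexandrov normal-mapping argument. The overall scheme is first to show geometrically that the image $\nabla u(\Gamma^-(u,\Omega))\subset\bR^d$ contains a large ball about the origin, and then to bound the measure of that ball from above using the area formula for the $C^1$ map $x\mapsto \nabla u(x)$. The main obstacle is the geometric covering step: the threshold $M/(3\diam\Omega)$ built into the definition of $\Gamma^-$ has to be calibrated so that sliding tangent planes of slope $|p|$ up to this threshold can only first touch the graph of $u$ from above at an interior point of $\Omega$. Everything else is bookkeeping.

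First I would prove the covering claim: $\nabla u(\Gamma^-(u,\Omega))\supset B_{M/(3\diam\Omega)}(0)$. Assuming $M>0$, fix any $p\in\bR^d$ with $|p|<M/(3\diam\Omega)$. Let $\bar{x}\in\Omega$ attain $\sup_\Omega u$, and slide the family of affine functions $\ell_{p,c}(y)\defd c+p\cdot(y-\bar{x})$ downward (decreasing $c$ from a large value) until $\ell_{p,c}$ first touches the graph of $u$ from above at some $x_0\in\bar\Omega$. The bound on $|p|$ is precisely what ensures that on $\partial\Omega$ the strict inequality $\ell_{p,c}>u$ persists at the critical value of $c$, forcing $x_0\in\Omega$. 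At $x_0$ one then reads off $\nabla u(x_0)=p$ and $u(y)\leq u(x_0)+p\cdot(y-x_0)$ for all $y\in\Omega$, so $x_0\in\Gamma^-(u,\Omega)$.

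Next, at every $x\in\Gamma^-(u,\Omega)$ the graph of $u$ lies below its own tangent hyperplane, so $-D^2u(x)\geq 0$ as symmetric matrices. The map $\Phi\defd\nabla u:\Omega\to\bR^d$ is $C^1$ with Jacobian of absolute value $\det(-D^2u)$ on $\Gamma^-$. Applying the area formula for $C^1$ maps yields
\begin{equation*}
\big|\Phi(\Gamma^-(u,\Omega))\big|\leq\int_{\Gamma^-(u,\Omega)}\det(-D^2u)\,dx.
\end{equation*}
Combined with the previous step, this gives $\omega_d\bigl(M/(3\diam\Omega)\bigr)^d\leq\int_{\Gamma^-}\det(-D^2u)\,dx$, which rearranges to the first claimed estimate with $C_d=3\omega_d^{-1/d}$.

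Finally, for the elliptic version, on $\Gamma^-(u,\Omega)$ both $A\defd(a_{ij})$ and $B\defd-D^2u$ are symmetric positive semidefinite, so the matrix arithmetic-geometric inequality gives $(\det A\cdot\det B)^{1/d}\leq \frac{1}{d}\operatorname{tr}(AB)$. Since $\operatorname{tr}(AB)=a_{ij}(-\partial_{ij}u)\leq -f\leq f^-$ on $\Gamma^-$ by the hypothesis $a_{ij}\partial_{ij}u\geq f$, this yields the pointwise bound $\det(-D^2u)\leq (f^-)^d/(dD^*)^d$ on $\Gamma^-$. Plugging this into the first estimate produces (\ref{2.61}) with $C_d' = 3\omega_d^{-1/d}/d$.
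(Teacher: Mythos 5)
The paper offers no proof of this lemma; it simply cites Gilbarg--Trudinger, Lemma~9.3. Your argument is the standard Alexandrov normal-mapping proof, which is precisely the one in that reference, and all three steps (the sliding/covering argument, the area formula, and the matrix AM--GM inequality $(\det A\,\det B)^{1/d}\leq \tfrac{1}{d}\mathrm{tr}(AB)$ for symmetric PSD $A,B$) are carried out correctly, including the verification that the extra constraint $|\nabla u(x_0)|\leq M/(3\,\diam\,\Omega)$ built into the paper's definition of $\Gamma^-$ is satisfied by the touching point because $|p|<M/(3\,\diam\,\Omega)$.

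One small inconsistency to flag: from $\omega_d\bigl(M/(3\,\diam\,\Omega)\bigr)^d\leq\int_{\Gamma^-}\det(-D^2u)\,dx$ you obtain
\begin{equation*}
M\leq 3\,\diam\,\Omega\cdot\omega_d^{-1/d}\biggl(\int_{\Gamma^-}\det(-D^2u)\,dx\biggr)^{1/d},
\end{equation*}
so the diameter does not disappear, and your claim that this ``rearranges to the first claimed estimate with $C_d=3\omega_d^{-1/d}$'' is not literally right; the displayed first inequality in the lemma (which omits $\diam\,\Omega$) is a typo in the paper, as one sees both by scaling and by comparison with the second display (\ref{2.61}), which does carry the diameter and is what is actually invoked in Section~5. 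Your derivation proves the dimensionally correct statement; the only error is in matching your (correct) bound to the paper's (slightly garbled) first display.
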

\begin{rem} In this section (Proof of  Theorem \ref{t5.2} and Lemma \ref{lem5.3}), we apply this estimate with $d=2n$ to the operator $\Delta_{\varphi}$. After rewriting $\Delta_{\varphi}$ in terms of real coefficients, one can find $D^*=\big(\det (g_{\varphi})_{i\bar{j}}\big)^{-\frac{1}{n}}=e^{-\frac{F}{n}}\big(\det g_{i\bar{j}}\big)^{-\frac{1}{n}}$.
\end{rem}


Finally, we want to give a proof to Theorem \ref{Thm5.1}.
\begin{proof} It is well known that in a given K\"ahler class, cscK metrics
is global minimizer of the K-energy functional, by the main result of \cite{Ber14-01}. In particular, it follows that the K energy functional of $\varphi$ is a priori bounded from above.
Recall the decomposition formula  for K energy functional $E$, proved in \cite{chen00}: 
\begin{equation}\label{5.30}
K(\varphi)=\int_M\log\frac{\omega_{\varphi}^n}{\omega_0^n}\frac{\omega_{\varphi}^n}{n!}+J_{-Ric}(\varphi).
\end{equation}
In the above, $J_{-Ric}$ is defined in terms of its derivative, namely 
$$
\frac{dJ_{-Ric}}{dt}=\int_M\frac{\partial\varphi}{\partial t}(-tr_{\varphi}Ric+\underline{R})\frac{\omega_{\varphi}^n}{n!}.
$$
It is well known in the literature that $J_{-Ric}$ can be bounded in terms of $C^0$ norm of the potential function $\varphi.\;$
A bound for $\int_Me^F|F|dvol_g$ follows from here. 

Now we prove the second part of the theorem. First Corollary \ref{c5.4} gives a bound for $F$ from above and Corollary \ref{c5.3} gives a bound for $||\varphi||_0$. Proposition \ref{p2.1} gives a bound for $F$ from below.
\end{proof}

\section{Some local estimates}
In this section, we show some localized version of our previous estimates.
Suppose we have a solution $\varphi$ to (\ref{csck1}), (\ref{csck2}) in the unit ball $B_1(0)\subset\mathbb{C}^n$.
First we can find a potential $\rho$ to the background metric $g$, namely 
$$
\omega_0=\sqrt{-1}\partial\bar{\partial}\rho,\textrm{ in $B_1(0)$.}
$$
Denote $\phi=\varphi+\rho$, $G=F+\log\det g_{i\bar{j}}$, then the equation (\ref{csck1}), (\ref{csck2}) can be rewritten as:
\begin{align}
\label{csck-1}
&\det\phi_{i\bar{j}}=e^G,\\
\label{csck-2}
&\Delta_{\phi}G=-\underline{R}.
\end{align}
In the above, $\Delta_{\phi}=\phi^{i\bar{j}}\partial_{i\bar{j}}$.
In the following, we show that if $\Delta\phi\in L^p(B_1(0))$, and $\sum_i\frac{1}{\phi_{i\bar{i}}}\in L^p(B_1(0))$ for $p$ sufficiently large depending only on dimension $n$, then we have $\frac{1}{C}\leq\phi_{i\bar{j}}\leq C$, for some constant $C$. More precisely, 
\begin{prop}\label{p6.1}
Let $\phi$ be a smooth pluri-subharmonic solution to (\ref{csck-1}), (\ref{csck-2}) in $B_1(0)\subset\mathbb{C}^n$, such that $\Delta\phi\in L^p(B_1(0))$ and $\sum_i\frac{1}{\phi_{i\bar{i}}}\in L^p(B_1(0))$ for some $p>3n(n-1)$. Then there exists a constant $C_6$, depending only on $p$, $||\Delta\phi||_{L^p(B_1(0))}$, $||\sum_i\frac{1}{\phi_{i\bar{i}}}||_{L^p(B_1(0))}$, such that
$\frac{1}{C_6}\leq\phi_{i\bar{j}}\leq C_6$, $|\nabla G|\leq C_6$ in $B_{\frac{1}{2}}(0)$. 
\end{prop}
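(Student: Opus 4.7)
The plan is to establish Proposition \ref{p6.1} as an interior analog of Proposition \ref{p4.2}, replacing the maximum principle used in the compact case with a Moser iteration on a cutoff version of the same differential inequality. By AM--GM on the eigenvalues of $\phi_{i\bar j}$, the hypotheses immediately yield
\[
ne^{G/n}\le n+\Delta\phi \quad\text{and}\quad ne^{-G/n}\le \sum_i\phi^{i\bar i},
\]
so $e^G,e^{-G}\in L^{p/n}_{\mathrm{loc}}(B_1)$; since $p>3n(n-1)\ge 2n$, Calder\'on--Zygmund plus Morrey also gives $\phi\in C^{0,\alpha}_{\mathrm{loc}}$. Because $\Delta_\phi G=-\underline R$ is \emph{constant} in the local model (the passage to (\ref{csck-2}) absorbs the background Ricci exactly into $\log\det g_{i\bar j}$), the third-derivative term $(\Delta_\phi G)_i$ in the analogue of the Section~4 computation vanishes identically, simplifying the derivation leading to (\ref{key}). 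Applied to
\[
u:=e^{G/2}|\nabla_\phi G|^2_\phi + K(n+\Delta\phi),\qquad K\text{ large},
\]
the same calculation produces, pointwise in $B_1$,
\[
\Delta_\phi u\ge -C(n+\Delta\phi)^{3n-3}(u+1).
\]

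To turn this into an $L^\infty$ bound, fix a cutoff $\eta\in C_c^\infty(B_{3/4})$ with $\eta\equiv 1$ on $B_{1/2}(0)$, and test the inequality against $\eta^{2q}u^{2p-1}$, integrating with respect to $dvol_\phi=e^G\,dz$. Because $\partial_{\bar j}(e^G\phi^{i\bar j})=0$ (the Piola identity for the cofactor matrix of $\phi_{i\bar j}$), integration by parts is clean, with derivatives falling only on $u$ and $\eta$. Converting the intrinsic norm $|\nabla_\phi u|^2_\phi$ to the Euclidean $L^{2-\eps}$ norm via the inequality $|\nabla v|^{2-\eps}\le n^{\eps/2}|\nabla_\phi v|_\phi^{2-\eps}(n+\Delta\phi)^{(2-\eps)/2}$ used in Section~4, then applying Euclidean Sobolev $W^{1,2-\eps}(\mathbb{C}^n)\hookrightarrow L^\theta$ with $\theta>2+\eps$ and H\"older to split off the factor $(n+\Delta\phi)^{3n-3}$, the iteration closes precisely when $(n+\Delta\phi)^{3n-3}\in L^s_{\mathrm{loc}}$ for some $s>n=d/2$. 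This is exactly $n+\Delta\phi\in L^p$ with $p>3n(n-1)$, matching the hypothesis. The iteration terminates in an $L^\infty$ bound for $u$, giving
\[
n+\Delta\phi\le C_6,\qquad e^{G/2}|\nabla_\phi G|^2_\phi\le C_6 \quad\text{on }B_{1/2}(0).
\]

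The upper bound on $\Delta\phi$ only yields $\phi_{i\bar j}\le CI$; the reverse bound requires controlling $\sum_i\phi^{i\bar i}$ pointwise, since otherwise $\det\phi_{i\bar j}=e^G$ could degenerate. For this, run the symmetric argument: compute $\Delta_\phi\bigl(\sum_i\phi^{i\bar i}\bigr)$ using $\log\det\phi_{i\bar j}=G$ and the Piola identity to re-express the fourth-order terms through derivatives of $G$. The resulting differential inequality is formally symmetric to the one for $n+\Delta\phi$, with coefficient $\bigl(\sum_i\phi^{i\bar i}\bigr)^{3n-3}$; the same Moser scheme applied to $\tilde u:=e^{-G/2}|\nabla_\phi G|^2_\phi+K\sum_i\phi^{i\bar i}$ closes under the hypothesis $\sum_i\phi^{i\bar i}\in L^p_{\mathrm{loc}}$, yielding $\sum_i\phi^{i\bar i}\le C_6$ on $B_{1/2}$. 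Combined with the upper bound this gives $C_6^{-1}I\le \phi_{i\bar j}\le C_6 I$, hence $e^G\in[c,C]$; together with $e^{G/2}|\nabla_\phi G|^2_\phi\le C_6$ and $|\nabla G|^2\le (\max_i\phi_{i\bar i})|\nabla_\phi G|^2_\phi$ this produces $|\nabla G|\le C_6$ on $B_{1/2}(0)$. The principal obstacle is precisely this parallel iteration for $\sum_i\phi^{i\bar i}$: verifying that the third-order terms in $\Delta_\phi\bigl(\sum_i\phi^{i\bar i}\bigr)$ reassemble into the same complete-square cancellation that makes the first estimate work is the only nontrivial check; everything else is a careful localization of Section~4 with standard Sobolev tools.
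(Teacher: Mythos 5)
Your proposal diverges from the paper's proof in a way that matters, and the central gap is that you never establish a pointwise bound on $G$ before running the iteration. The paper's first (and essential) move is to rewrite (\ref{csck-2}) as a genuine divergence-form equation,
\[
\mathrm{Re}\,\partial_i\bigl(\det(\phi_{\alpha\bar\beta})\,\phi^{i\bar j}\,\partial_{\bar j}G\bigr) = -\underline R\, e^G,
\]
whose coefficient matrix $a^{i\bar j}=\det(\phi_{\alpha\bar\beta})\phi^{i\bar j}$ satisfies the two-sided degenerate ellipticity bound $c_n^{-1}\bigl(\sum_i\phi^{i\bar i}\bigr)^{1-n}I\le a\le c_n(\Delta\phi)^{n-1}I$, and then to invoke Lemma \ref{l4.2} with $\lambda=(\Delta\phi+\sum_i\phi^{i\bar i})^{n-1}$. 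The requirement $\lambda\in L^{p'}$ with $p'>3d/2=3n$ is exactly what forces the hypothesis $p>3n(n-1)$: this threshold has nothing to do with the iteration for $\Delta\phi$. Once $\|G\|_{L^\infty}$ is controlled, the lower bound $\phi_{i\bar j}\ge C^{-1}I$ follows trivially from $e^G=\det\phi_{i\bar j}$ bounded below and $\phi_{i\bar i}\le\Delta\phi\le C$, so no second, ``symmetric'' iteration for $\sum_i\phi^{i\bar i}$ is needed (or verified). In fact, the material on $tr_\varphi g$ left after the bibliography suggests that such a symmetric scheme has obstructions; you acknowledge this is ``the only nontrivial check,'' but it is precisely the kind of cancellation that cannot be waved through.

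There are also two quantitative errors in your iteration. First, the coefficient $(n+\Delta\phi)^{3n-3}$ is imported from Section~4, where the high power comes from estimating the background-curvature terms $R_{\alpha\bar\beta}$, $R_{\alpha\bar\alpha,i}$ via $(1+\varphi_{\alpha\bar\alpha})^{-1}\le e^{-F}(n+\Delta\varphi)^{n-1}$. In the local flat model these curvature terms are absent, and the paper's computation (with $G$ already bounded) gives the much milder differential inequality $\Delta_\phi u\ge-\bigl(\tfrac12\underline R+\tfrac12\Delta\phi\, e^{-G/2}\bigr)u$, whose coefficient is linear in $\Delta\phi$. Second, your claim that ``the iteration closes precisely when $(n+\Delta\phi)^{3n-3}\in L^s$ for some $s>n=d/2$'' treats the operator as if it were uniformly elliptic. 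The degenerate ellipticity carried by $\lambda$ raises the threshold: the paper's Lemma \ref{l4.2} needs $f\in L^{p'/2}$ with $p'>3d/2$, i.e.\ $s>3n/2$, and even in the compact Section~4 iteration, Remark \ref{r3.2} records $p_n\le(3n-3)(4n+1)$, far larger than $n(3n-3)$. So your numerology $p>3n(n-1)$ happens to match the hypothesis by coincidence, not because the iteration you describe closes at that exponent.

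In short: bound $G$ first using the divergence structure and Lemma \ref{l4.2}; with $G$ bounded the differential inequality for $u=e^{G/2}|\nabla_\phi G|_\phi^2+\Delta\phi$ has a linear-in-$\Delta\phi$ coefficient and another application of Lemma \ref{l4.2} finishes the $\Delta\phi$ bound; the lower bound on $\phi_{i\bar j}$ and the gradient bound on $G$ then drop out for free.
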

By the same argument in (\ref{p1.1}), we have the following corollary:
\begin{cor}\label{c6.2}
Under the assumption of Proposition \ref{p6.1}, for any $0<\theta<1$, we have $||D^k\phi||_{0,B_{\theta}}\leq C(k)$, for any $k\geq2$. Here $C(k)$ has the same dependence as described in Proposition \ref{p6.1} besides dependence on $\theta$ and on $||\phi||_{0,B_1}$.
\end{cor}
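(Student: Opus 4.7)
The plan is to run the same bootstrap as sketched for Proposition 1.1, but localized to a nested sequence of shrinking concentric balls. Proposition 6.1 gives, on $B_{1/2}(0)$, the two-sided bound $\tfrac{1}{C_6}\leq \phi_{i\bar j}\leq C_6$ (so the linearized operator $\Delta_{\phi}=\phi^{i\bar j}\partial_{i\bar j}$ is uniformly elliptic with measurable coefficients) together with $|\nabla G|\leq C_6$. Fix $\theta\in(0,1)$; it suffices to exhibit radii $\tfrac12>\theta_0>\theta_1>\cdots>\theta$ and to show that each stage of the bootstrap improves regularity by one derivative while only shrinking the ball slightly, so that after finitely many steps we land inside $B_\theta$ with a $C^{k,\alpha}$ bound.

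First I would use equation (\ref{csck-2}), rewritten as $\phi^{i\bar j}G_{i\bar j}=-\underline R$: a uniformly elliptic linear equation in non-divergence form with bounded measurable coefficients and bounded right-hand side. The Krylov--Safonov / De Giorgi--Nash--Moser interior estimate (\cite{GT}, Thm.~8.22 in the divergence case, or the Krylov--Safonov Harnack in the non-divergence case) yields $[G]_{C^\alpha(B_{\theta_0})}\leq C$ for some $\alpha\in(0,1)$ depending only on the ellipticity ratio $C_6$. Then the Monge--Amp\`ere equation (\ref{csck-1}), $\det\phi_{i\bar j}=e^G$, has $C^\alpha$ right-hand side on $B_{\theta_0}$ and the solution $\phi$ is already bounded in $C^{1,1}$ there (by Proposition~6.1), which places us squarely in the Caffarelli/Evans--Krylov framework for complex Monge--Amp\`ere; the interior estimate of \cite{YW} gives $[\phi]_{C^{2,\alpha'}(B_{\theta_1})}\leq C$ for some $\alpha'<\alpha$.

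Once $\phi\in C^{2,\alpha'}$ on $B_{\theta_1}$, the coefficients $\phi^{i\bar j}$ of $\Delta_\phi$ are in $C^{\alpha'}$, so (\ref{csck-2}) is a uniformly elliptic linear equation with H\"older coefficients and H\"older right-hand side (here constant). Schauder estimates (\cite{GT}, Thm.~6.2) upgrade $G$ to $C^{2,\alpha'}(B_{\theta_2})$. Substituting back into (\ref{csck-1}), I would then differentiate the Monge--Amp\`ere equation once in a real direction $\partial_s$: the function $\partial_s\phi$ satisfies the linear elliptic equation $\phi^{i\bar j}(\partial_s\phi)_{i\bar j}=\partial_s G$ with $C^{\alpha'}$ coefficients and $C^{1,\alpha'}$ right-hand side, so Schauder gives $\phi\in C^{3,\alpha'}(B_{\theta_3})$. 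Iterating alternately between (\ref{csck-2}) for $G$ and the differentiated (\ref{csck-1}) for $\phi$, both equations gain one derivative per round.

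The procedure is almost mechanical; the only bookkeeping issue is that each Schauder / Evans--Krylov application shrinks the ball by a definite factor, so to reach radius $\theta$ one must start the bootstrap at a radius slightly below $\tfrac12$ and absorb the losses. Since for each fixed $k$ only $O(k)$ steps are needed, this is harmless: choose intermediate radii $\theta<\theta_{2k}<\theta_{2k-1}<\cdots<\theta_0<\tfrac12$ at the outset, depending on $k$ and $\theta$. The main (mild) obstacle is verifying that the complex Monge--Amp\`ere interior $C^{2,\alpha}$ estimate really applies with $C^\alpha$ right-hand side and only the a priori quasi-isometric bound on $\phi_{i\bar j}$, rather than a bound on a higher norm; this is the content of the interior estimates in \cite{YW} and is exactly what is being invoked in the paragraph following Proposition~1.1.
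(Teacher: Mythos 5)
Your proposal is correct and is essentially the paper's own argument: the paper proves Corollary \ref{c6.2} simply by invoking ``the same argument as in Proposition \ref{p1.1}'', i.e.\ the local bootstrap you describe (De Giorgi--Nash--Moser/Krylov--Safonov for $G$, Evans--Krylov/Caffarelli via \cite{YW} for $C^{2,\alpha'}$ of $\phi$, then Schauder and differentiation of (\ref{csck-1}) on a nested sequence of shrinking balls). Your explicit bookkeeping of the intermediate radii and of where $\|\phi\|_{0,B_1}$ enters is just a more careful writing-out of the same route.
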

Now we prove Proposition \ref{p6.1}, using Lemma \ref{l4.2} stated and proved later.
\begin{proof}
(of Proposition \ref{p6.1})First we want to get boundedness of $G$, using the second equation. We can write the second equation as
\begin{equation}
\det(\phi_{\alpha\bar{\beta}})\phi^{i\bar{j}}\partial_{i\bar{j}}G=e^GG.
\end{equation}
which is equivalent to:
\begin{equation}\label{6.4}
Re\big(\partial_i(\det(\phi_{\alpha\bar{\beta}})\phi^{i\bar{j}}\partial_{\bar{j}}G)\big)=e^GG.
\end{equation}
Denote $a_{i\bar{j}}=\det(\phi_{\alpha\bar{\beta}})\phi^{i\bar{j}}$, which is a hermitian matrix, then for some constant $c_n>0$, we have
\begin{equation}\label{6.5}
\frac{1}{c_n\big(\sum_i\frac{1}{\phi_{i\bar{i}}}\big)^{n-1}}I\leq a_{i\bar{j}}\leq c_n(\Delta\phi)^{n-1}I.
\end{equation}
The left hand side of (\ref{6.4}) is a real elliptic operator in divergence form, which satisfies an ellipticity condition same as (\ref{6.5}).
We wish to apply Lemma \ref{l4.2} to the equation (\ref{6.4}).
Using (\ref{6.5}), we can take $\lambda=\big(\Delta\phi+\sum_i\frac{1}{\phi_{i\bar{i}}}\big)^{n-1}$, and $f=e^GG$.
In order to apply Lemma \ref{l4.2}, we need to show $(\Delta\phi)^{n-1},\,(\sum_i\frac{1}{\phi_{i\bar{i}}})^{n-1}\in L^p(B_1)$, and $e^GG\in L^{p/2}(B_1)$ for some $p>3n$.
The desired integrability for $\Delta\phi$ and $\sum_i\frac{1}{\phi_{i\bar{i}}}$ is clear from assumption, while for $e^GG$, since $e^G\leq(\Delta\phi)^n$, we just need to make sure $(\Delta\phi)^n\in L^{p'}$ for some $p'>\frac{3n}{2}$. This is again clear from our assumption on $\Delta\phi$.
So we can apply Lemma \ref{l4.2} to conclude $G$ is bounded(with the said dependence) on any interior ball of $B_1$. In the following we assume $G$ is bounded on $B_1$ without loss of generality.

The estimate for $\Delta\phi$ is really similar to our calculation in section 4, so we will be suitably brief here.

Choose any point $p$ and we can do a unitary coordinate transform so that $\phi_{i\bar{j}}(p)=\phi_{i\bar{i}}(p)\delta_{ij}$. 
We can compute
\begin{equation}\label{6.6}
\Delta_{\phi}(|\nabla_{\phi}G|^2)=\frac{1}{\phi_{i\bar{i}}\phi_{\alpha\bar{\alpha}}}|\phi_{i\alpha}-\sum_p\frac{\phi_{i\alpha\bar{p}}G_p}{\phi_{p\bar{p}}}|^2+\frac{|G_{p\bar{i}}|^2}{\phi_{i\bar{i}}\phi_{p\bar{p}}}-\frac{G_{q\bar{p}}G_pG_{\bar{q}}}{\phi_{p\bar{p}}\phi_{q\bar{q}}}.
\end{equation}
Here $|\nabla_{\phi}G|^2=\phi^{p\bar{q}}G_pG_{\bar{q}}$.
\begin{equation}\label{6.7}
\Delta_{\phi}(e^{\frac{1}{2}G}|\nabla_{\phi}G|^2)=\Delta_{\phi}(e^{\frac{1}{2}G})|\nabla_{\phi}G|^2+e^{\frac{1}{2}G}\Delta_{\phi}(|\nabla_{\phi}G|^2)+\frac{1}{2}e^{\frac{1}{2}G}\frac{G_i(|\nabla_{\phi}G|^2)_{\bar{i}}+G_{\bar{i}}(|\nabla_{\phi}G|^2)_i}{\phi_{i\bar{i}}}.
\end{equation}
One can also compute 
\begin{equation}\label{6.8}
\frac{G_i(|\nabla_{\phi}G|^2)_{\bar{i}}}{\phi_{i\bar{i}}}=\frac{G_iG_p}{\phi_{i\bar{i}}\phi_{p\bar{p}}}(G_{\bar{p}\bar{i}}-\sum_t\frac{\phi_{t\bar{q}\bar{i}}G_{\bar{t}}}{\phi_{t\bar{t}}})+\frac{G_{p\bar{i}}G_{\bar{p}}G_i}{\phi_{p\bar{p}}\phi_{i\bar{i}}}.
\end{equation}
Combining (\ref{6.6}), (\ref{6.7}), (\ref{6.8}), we obtain
\begin{equation}
\Delta_{\phi}(e^{\frac{1}{2}G}|\nabla_{\phi}G|^2)e^{-\frac{1}{2}G}=-\frac{1}{2}\underline{R}|\nabla_{\phi}G|^2+\frac{1}{\phi_{i\bar{i}}\phi_{\alpha\bar{\alpha}}}|G_{i\alpha}-\sum_p\frac{\phi_{i\alpha\bar{p}}G_p}{\phi_{p\bar{p}}}-\frac{1}{2}\phi_i\phi_{\alpha}|^2+\frac{|G_{p\bar{i}}|^2}{\phi_{i\bar{i}}\phi_{p\bar{p}}}.
\end{equation}
Also we can compute
\begin{equation}
\Delta_{\phi}(\Delta\phi)=\frac{|\phi_{i\bar{j}p}|^2}{\phi_{i\bar{i}}\phi_{j\bar{j}}}+\Delta G.
\end{equation}
Hence
\begin{equation}
\begin{split}
\Delta_{\phi}&(e^{\frac{1}{2}G}|\nabla_{\phi}G|^2+\Delta\phi)\geq-\frac{1}{2}\underline{R}e^{\frac{1}{2}G}|\nabla_{\phi}G|^2+\frac{|G_{p\bar{i}}|^2e^{\frac{1}{2}G}}{\phi_{i\bar{i}}\phi_{p\bar{p}}}+\Delta G\\
&\geq -\frac{1}{2}\underline{R}e^{\frac{1}{2}G}|\nabla_{\phi}G|^2+\frac{e^{\frac{1}{2}G}G_{i\bar{i}}^2}{\phi_{i\bar{i}}^2}-\frac{1}{2}\frac{G_{i\bar{i}}^2e^{\frac{1}{2}G}}{\phi_{i\bar{i}}^2}-\frac{1}{2}(\Delta\phi)^2e^{-\frac{1}{2}G}.
\end{split}
\end{equation}
In the last inequality above, we noticed
$$
\Delta G=\sum_iG_{i\bar{i}}\leq\frac{1}{2}\frac{G_{i\bar{i}}^2e^{\frac{1}{2}G}}{\phi_{i\bar{i}}^2}+\frac{1}{2}\sum_i\phi_{i\bar{i}}^2e^{-\frac{1}{2}G}\leq\frac{1}{2}\frac{G_{i\bar{i}}^2e^{\frac{1}{2}G}}{\phi_{i\bar{i}}^2}+\frac{1}{2}(\Delta\phi)^2e^{-\frac{1}{2}G}.
$$
Denote $u=e^{\frac{1}{2}G}|\nabla_{\phi}G|^2+\Delta\phi$, then we know
\begin{equation}
\Delta_{\phi}(u)\geq-(\frac{1}{2}\underline{R}+\frac{1}{2}\Delta\phi e^{-\frac{1}{2}G})u.
\end{equation}
Denote $f=\frac{1}{2}\underline{R}+\frac{1}{2}\Delta\phi e^{-\frac{1}{2}G}$. Recall that we now already know $G$ is bounded. Our assumption implies $f\in L^p$ for some $p>3n$.
Hence we may invoke Lemma \ref{l4.2} to get the desired result.
\end{proof}

As a direct consequence of above argument, we can now prove Corollary \ref{c1.3new}.
\begin{proof}
(of Corollary \ref{c1.3new})
Define $F=\log\det u_{\alpha\bar{\beta}}$, first we show that $F$ is a constant.
By the assumption, we can take a sequence of $r_{s}\rightarrow\infty$, and a constant $M$, such that
\begin{equation}\label{6.13new}
\sup_{s\geq1}\frac{1}{r_{s}^{2n}}\int_{B_{r_{s}}(0)}(\Delta u)^p+\big(\sum_k\frac{1}{u_{k\bar{k}}}\big)^p\leq M.
\end{equation}
Define $u_{s}(z)=\frac{1}{r_{s}^2}u(r_s z)$.
Let $\Delta_{s}:=u_{s}^{i\bar{j}}\partial_{i\bar{j}}$, the Laplace operator in $\mathbb{C}^n$ defined by the metric $\sqrt{-1}\partial\bar{\partial}u_{s}$.
Also we denote $F_{s}:=\log\det \partial_{\alpha\bar{\beta}}u_s$, then $F_s(z)=F(r_sz)$.
Hence $\Delta_sF_s=0$ in $B_1(0)$, and (\ref{6.13new}) implies 
\begin{equation}\label{6.14new}
\sup_{s\geq1}\int_{B_1}(\Delta u_s)^p+\big(\sum_k\frac{1}{(u_s)_{k\bar{k}}}\big)^p\leq M.
\end{equation}
Proposition \ref{p6.1} shows that there exists a positive constant $C_7$, independent of $s$, such that $\frac{1}{C_7}\leq (u_s)_{i\bar{j}}\leq C_7$  and $|\nabla F_s|\leq C_7$ in $B_{\frac{1}{2}}(0)$. Rescaling back, we find that $|\nabla F|\leq \frac{C_7}{r_s}$ in $B_{\frac{1}{2}r_s}(0)$. Sending $s\rightarrow\infty$, we get $\nabla F\equiv0$ on $\mathbb{C}^n$. Namely we have $\det \partial_{\alpha\bar{\beta}}u_s=c$ for some $c>0$ on $B_1$.
Then we may use Evans-Krylov theorem to conclude for some $\alpha>0$
$$
\sup_k[\sqrt{-1}\partial\bar{\partial}u_s]_{\alpha,B_{\frac{1}{4}}}\leq M_1.
$$
In terms of $u$, this implies 
$$
r_s^{\alpha}\frac{|u_{i\bar{j}}(z_1)-u_{i\bar{j}}(z_2)|}{|z_1-z_2|^{\alpha}}\leq M, \textrm{ for any $z_1,\,z_2\in B_{\frac{r_s}{4}}(0)$.}
$$
Letting $s\rightarrow\infty$, we obtain $u_{i\bar{j}}(z_1)=u_{i\bar{j}}(z_2)$, for any $z_1,\,z_2\in \mathbb{C}^n$. This implies the Levi Hessian of $u$ is constant.
\end{proof}
The following is a technical lemma which we used in the proof of Proposition \ref{p6.1}. The way to prove it is the standard Moser's iteration and may have existed in literature but we were not able to find the exactly reference, so we include a proof here.
\begin{lem}\label{l4.2}
Suppose $u\geq0$ satisfies in $B_1\subset\bR^d$:
$$
\partial_i\big(a^{ij}\partial_ju\big)\geq fu+g.
$$
Here $\frac{1}{\lambda(x)}\leq a^{ij}(x)\leq\lambda(x)$, with $\lambda(x)\in L^p(B_1)$, $f,\,g\in L^{p/2}(B_1)$ for some $p>\frac{3d}{2}$, then there exists a constant $C$, depending on $p$, $||\lambda||_{L^p(B_1)}$, $||f||_{L^{p/2}(B_1)}$, $||g||_{L^{p/2}(B_1)}$, such that
$$\sup_{B_{\frac{1}{2}}}u\leq C(||u||_{L^1(B_1)}+1).
$$
\end{lem}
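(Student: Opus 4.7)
The plan is a Moser iteration adapted to the fact that the ellipticity constant $\lambda$ is only integrable, not bounded. I would first test the differential inequality against $u^{\beta}\eta^{2}$ with $\beta\geq 1$ a parameter to be iterated and $\eta$ a standard cutoff supported in $B_{R}$ with $\eta\equiv 1$ on $B_{r}$, for $1/2\leq r<R\leq 1$ and $|\nabla\eta|\leq 2/(R-r)$. After integrating by parts and absorbing the cross term via the Cauchy--Schwarz bound $|a^{ij}\xi_{j}w_{i}|\leq (a^{ij}\xi_{i}\xi_{j})^{1/2}(a^{ij}w_{i}w_{j})^{1/2}$, and setting $v:=u^{(\beta+1)/2}$, one obtains a weighted Caccioppoli inequality
\begin{equation*}
\int \frac{|\nabla v|^{2}\eta^{2}}{\lambda}\;\leq\; \frac{C(\beta)}{(R-r)^{2}}\int_{B_{R}}\lambda v^{2}\;+\;C(\beta)\int(|f|+|g|)v^{2}\eta^{2}\;+\;C(\beta)\int|g|\eta^{2},
\end{equation*}
where the last $g$-integral comes from the elementary estimate $v^{2\beta/(\beta+1)}\leq v^{2}+1$.

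To convert this into something amenable to Sobolev embedding, I would exploit $\lambda\in L^{p}$ via H\"older: set $q:=2p/(p+1)\in(1,2)$, so that $q/(2-q)=p$, and split $|\nabla(v\eta)|^{q}=\bigl(|\nabla(v\eta)|^{q}\lambda^{-q/2}\bigr)\cdot\lambda^{q/2}$ to get
\begin{equation*}
\int|\nabla(v\eta)|^{q}\;\leq\; \left(\int\frac{|\nabla(v\eta)|^{2}}{\lambda}\right)^{q/2}\|\lambda\|_{L^{p}(B_{1})}^{q/2}.
\end{equation*}
The classical Sobolev inequality at exponent $q$ then yields $\|v\eta\|_{L^{q^{*}}}^{2}\leq C\|\lambda\|_{L^{p}}\int|\nabla(v\eta)|^{2}/\lambda$ with $q^{*}=dq/(d-q)=2dp/(p(d-2)+d)$. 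The key quantitative observation is that the hypothesis $p>3d/2$ is equivalent to both $\chi:=q^{*}/2>1$ (the gain that drives the iteration) and the strict inequality $2p/(p-2)<q^{*}$ (the subcriticality needed to absorb $f$). Indeed, H\"older gives $\int|f|v^{2}\eta^{2}\leq\|f\|_{L^{p/2}}\|v\eta\|_{L^{2p/(p-2)}}^{2}$, and interpolating $L^{2p/(p-2)}$ between $L^{2}$ and $L^{q^{*}}$ followed by Young absorbs a small multiple of $\|v\eta\|_{L^{q^{*}}}^{2}$ into the left-hand side; the $\lambda$ and $g$ terms are treated similarly, using $p>d$ (implied by $p>3d/2$).

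After these absorptions one is left with a reverse H\"older estimate
\begin{equation*}
\|u\|_{L^{\chi(\beta+1)}(B_{r})}^{\beta+1}\;\leq\; \frac{C(\beta)}{(R-r)^{2}}\bigl(\|u\|_{L^{\beta+1}(B_{R})}^{\beta+1}+1\bigr).
\end{equation*}
Iterating with $\beta_{k}+1=2\chi^{k}$ and radii $r_{k}=1/2+2^{-k-1}$, the product of constants converges since $\sum_{k}\chi^{-k}$ and $\sum_{k}k\chi^{-k}$ both converge, yielding $\sup_{B_{1/2}}u\leq C(\|u\|_{L^{2}(B_{1})}+1)$. Keeping the $R-r$ dependence explicit gives the scaled form $\sup_{B_{r}}u\leq C(R-r)^{-\alpha}(\|u\|_{L^{2}(B_{R})}+1)$, and then interpolating $\|u\|_{L^{2}}\leq\|u\|_{L^{\infty}}^{1/2}\|u\|_{L^{1}}^{1/2}$ together with the standard absorption lemma (in the style of \cite{GT}, Lemma 4.3) upgrades the $L^2$ norm on the right to $\|u\|_{L^{1}(B_{1})}$, completing the proof.

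The main obstacle is not the iteration itself but the bookkeeping in the passage between the weighted energy $\int|\nabla v|^{2}/\lambda$ and the unweighted Sobolev inequality: every such passage spends a power of $\|\lambda\|_{L^{p}}$, and the sharp threshold $p>3d/2$ is precisely what simultaneously delivers the Sobolev gain $\chi>1$ and the strict subcriticality $2p/(p-2)<q^{*}$ that makes the $f$-term genuinely absorbable rather than merely bounded.
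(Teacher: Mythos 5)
Your overall strategy coincides with the paper's: test against a power of $u$ times a cutoff, pass from the weighted Caccioppoli $\int |\nabla v|^2\eta^2/\lambda$ to an unweighted $\|\nabla(v\eta)\|_{L^q}$ with $q=2p/(p+1)$ via H\"older against $\lambda\in L^p$, apply Sobolev, and iterate. The choice $q=2p/(p+1)$ is exactly the paper's choice $\eps=2/(p+1)$, and your reading that $p>3d/2$ is equivalent to $2p/(p-2)<q^*$ is correct. (A small stylistic difference: the paper works with $\bar u=u+1$ so that $g\bar u^{\beta}\le g\bar u^{\beta+1}$ and no separate $\int|g|\eta^2$ term appears; you keep $u$ and use $v^{2\beta/(\beta+1)}\le v^2+1$. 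Both are fine.)

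There is, however, a genuine gap in the displayed reverse H\"older inequality and in the claimed gain $\chi=q^*/2$. After H\"older against $\lambda\in L^p$, the cutoff term becomes
$\displaystyle \frac{1}{(R-r)^2}\,\|\lambda\|_{L^p}\,\|v\,|\nabla\eta|\,\|_{L^{2p/(p-1)}}^2$,
and the factor $|\nabla\eta|$ is supported precisely where $\eta<1$. The interpolation--Young absorption you use for the $f$-term (bounding $\|v\eta\|_{L^{2p/(p-2)}}^2$ by a small multiple of $\|v\eta\|_{L^{q^*}}^2$ plus $\|v\eta\|_{L^2}^2$) cannot be applied ``similarly'' to the $\lambda$-term, because the quantity you would need to absorb is $\|v\,|\nabla\eta|\,\|_{L^{q^*}}^2$, which is not dominated by the left-hand side $\|v\eta\|_{L^{q^*}}^2$ produced by Sobolev. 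Consequently the right-hand side of your reverse H\"older cannot be reduced to $\|u\|_{L^{\beta+1}(B_R)}^{\beta+1}=\|v\|_{L^2(B_R)}^2$; it must stay at a strictly higher exponent (at least $L^{2p/(p-1)}$ from the $\lambda$-term, or $L^{2p/(p-2)}$ if you also decline to absorb $f$, which is the paper's choice). The actual per-step gain is therefore $q^*\cdot\tfrac{p-2}{2p}$ (or $q^*\cdot\tfrac{p-1}{2p}$ after absorbing only $f$), not $q^*/2$; and note that $q^*/2>1$ is equivalent merely to $p>d/2$, not to $p>3d/2$, so that side of your ``equivalence'' is also off. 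None of this is fatal --- the iteration still closes because $q^*>2p/(p-2)$ under $p>3d/2$ --- but as written the reverse H\"older and the convergence bookkeeping do not hold together; they need to be redone at the correct exponent, which is essentially what the paper does by putting everything under $L^{\frac{2p}{p-2}\cdot\frac{\beta+1}{2}}(B_{r_i})$ and iterating with gain $\theta(p-2)/(2p)$.
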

\begin{proof}
The proof follows the same argument as the uniformly elliptic case.
From the inequality we know that for any $\zeta\in C_c^{\infty}(B_1)$, with $\zeta\geq0$, the following holds:
\begin{equation}\label{106}
\int_{B_1}a^{ij}\partial_ju\partial_i\zeta dx\leq\int_{B_1}fu\zeta+g\zeta dx.
\end{equation}
Now let $\eta\in C^{\infty}_c(B_1)$, define $\bar{u}=u+1$. Take $\zeta=\eta^2\bar{u}^{\beta}$, for some $\beta>0$.
We plug in this $\zeta$ and obtain
\begin{equation}
\begin{split}
\int_{B_1}&\beta a_{ij}\partial_i\bar{u}\partial_j\bar{u}\bar{u}^{\beta-1}\eta^2\leq\int-a_{ij}\partial_ju\partial_i\eta\bar{u}^{\beta}2\eta+|f|\bar{u}^{\beta+1}\eta^2+|g|\bar{u}^{\beta}\eta^2\\
&\leq\int_{B_1}\frac{\beta}{2}a_{ij}\partial_i\bar{u}\partial_j\bar{u}\bar{u}^{\beta-1}\eta^2+\frac{4}{\beta} a_{ij}\partial_i\eta\partial_j\eta\bar{u}^{\beta+1}+(|f|+|g|)\bar{u}^{\beta+1}\eta^2.
\end{split}
\end{equation}
Use the ellipticity condition to get:

\begin{equation}\label{108}
\int_{B_1}\frac{\beta}{\lambda}|\nabla\bar{u}|^2\bar{u}^{\beta-1}\eta^2\leq\int_{B_1}\big(\frac{4\lambda}{\beta}|\nabla\eta|^2+|f|\eta^2+|g|\eta^2\big)\bar{u}^{\beta+1}.
\end{equation}
This is equivalent to:
\begin{equation}\label{6.16}
\int_{B_1}|\nabla(\bar{u}^{\frac{\beta+1}{2}})|^2\eta^2\frac{1}{\lambda}\leq\frac{(\beta+1)^2}{\beta^2}\int_{B_1}\lambda|\nabla\eta|^2\bar{u}^{\beta+1}+\frac{(\beta+1)^2}{4\beta}\int_{B_1}(|f|+|g|)\bar{u}^{\beta+1}\eta^2.
\end{equation}
Next observe
$$
|\nabla(\bar{u}^{\frac{\beta+1}{2}}\eta)|^2\leq 2|\nabla(\bar{u}^{\frac{\beta+1}{2}})|^2\eta^2+2\bar{u}^{\beta+1}|\nabla\eta|^2.
$$
Hence it follows from (\ref{6.16}) that if $\beta\geq1$,
\begin{equation}
\begin{split}
\int_{B_1}|\nabla(\bar{u}^{\frac{\beta+1}{2}}\eta)|^2\frac{1}{\lambda}&\leq\int_{B_1}\big(\frac{2\lambda(\beta+1)^2}{\beta^2}+\frac{2}{\lambda}\big)\bar{u}^{\beta+1}|\nabla\eta|^2+\frac{(\beta+1)^2}{2\beta}\int_{B_1}(|f|+|g|)\bar{u}^{\beta+1}\eta^2\\
&\leq\int_{B_1}10\lambda\bar{u}^{\beta+1}|\nabla\eta|^2+2\beta\int_{B_1}(|f|+|g|)\bar{u}^{\beta+1}\eta^2.
\end{split}
\end{equation}
We would like to get rid of the $\lambda$ in the above estimate. Let $\eps>0$ to be determined, then we have
\begin{equation}
||\nabla(\bar{u}^{\frac{\beta+1}{2}}\eta)||_{L^{2-\eps}}^2\leq ||\lambda||_{L^{\frac{2}{\eps}-1}}^{\frac{2-\eps}{2}}\int_{B_1}\frac{1}{\lambda}|\nabla(\bar{u}^{\frac{\beta+1}{2}}\eta)|^2.
\end{equation}
On the other hand, we estimate the right hand side by H\"older's inequality:
\begin{align}
\label{6.19}
&\int_{B_1}\lambda\bar{u}^{\beta+1}|\nabla\eta|^2\leq||\lambda||_{L^{\frac{2}{\eps}-1}}||\bar{u}^{\frac{\beta+1}{2}}|\nabla\eta|||_{L^{\frac{2-\eps}{1-\eps}}}^2,
\\
\label{6.20}
&\int_{B_1}(|f|+|g|)\bar{u}^{\beta+1}\eta^2\leq\big(||f||_{L^{p/2}}+||g||_{L^{p/2}}\big)||\bar{u}^{\frac{\beta+1}{2}}\eta||_{L^{\frac{2p}{p-2}}}^2.
\end{align}
Therefore,
\begin{equation}
||\nabla(\bar{u}^{\frac{\beta+1}{2}}\eta)||_{L^{2-\eps}}^2\leq ||\lambda||_{L^{\frac{2}{\eps}-1}}^{\frac{2-\eps}{2}}\big(10||\lambda||_{L^{\frac{2}{\eps}-1}}||\bar{u}^{\frac{\beta+1}{2}}|\nabla\eta|||^2_{L^{\frac{2-\eps}{1-\eps}}}
+2\beta(||f||_{L^{p/2}}+||g||_{L^{p/2}})||\bar{u}^{\frac{\beta+1}{2}}\eta||_{L^{\frac{2p}{p-2}}}^2\big).
\end{equation}
Now we choose $\eps=\frac{2}{p+1}$, then $\frac{2}{\eps}-1=p$. With this choice, we have $\frac{2-\eps}{1-\eps}<\frac{2p}{p-2}$ in the above, then we find for some constant $C_{6.1}$, depending on $||\lambda||_{L^p}$, $||f||_{L^{p/2}}$, $||g||_{L^{p/2}}$, such that
\begin{equation}
||\nabla(\bar{u}^{\frac{\beta+1}{2}}\eta)||_{L^{\frac{2p}{p+1}}}^2\leq C_{6.1}\big(||\bar{u}^{\frac{\beta+1}{2}}|\nabla\eta|||^2_{L^{\frac{2p}{p-2}}}+\beta||\bar{u}^{\frac{\beta+1}{2}}\eta||_{L^{\frac{2p}{p-2}}}^2\big).
\end{equation}
Fix $\frac{1}{2}\leq r<R\leq1$, Denote $r_i=r+2^{-i}(R-r)$, for $i\geq0$. 
Note that $r_0=R$, and $r_i\rightarrow r$ as $i\rightarrow\infty$.
We choose the cut-off function $\eta$ so that $0\leq\eta\leq1$, $\eta\equiv1$ on $B_{r_{i+1}}$, $supp\,\eta\subset B_{r_i}$, and $|\nabla\eta|\leq\frac{2}{r_i-r_{i+1}}=\frac{2^{i+2}}{R-r}$.
Denote $\theta$ to be such that $\frac{1}{\theta}=\frac{p+1}{2p}-\frac{1}{d}$.
Since $p>\frac{3d}{2}$, it follows that $\theta>\frac{2p}{p-2}$.
Then apply the Sobolev inequality to get
\begin{equation}
||\bar{u}^{\frac{\beta+1}{2}}||_{L^{\theta}(B_{r_{i+1}})}\leq \frac{C_{6.2}\sqrt{\beta}2^{i+2}}{R-r}||\bar{u}^{\frac{\beta+1}{2}}||_{L^{\frac{2p}{p-2}}(B_{r_i})}.
\end{equation}
This is equivalent to:
\begin{equation}\label{6.24}
||\bar{u}||_{L^{\frac{\theta(\beta+1)}{2}}(B_{r_{i+1}})}\leq\frac{C_{6.2}^{\frac{2}{\beta+1}}\beta^{\frac{1}{\beta+1}}2^{\frac{2(i+2)}{\beta+1}}}{(R-r)^{\frac{2}{\beta+1}}}||\bar{u}||_{L^{\frac{2p}{p-2}\cdot\frac{\beta+1}{2}}(B_{r_i})}.
\end{equation}
Now denote $\theta=\chi\cdot\frac{2p}{p-2}$ for some $\chi>1$, and choose $\beta$ to be $\frac{\beta+1}{2}=\chi^i$, then we obtain from (\ref{6.24}):
\begin{equation}
||\bar{u}||_{L^{\frac{2p\chi^{i+1}}{p-2}}(B_{r_{i+1}})}\leq \big(\frac{2C_{6.2}}{R-r}\big)^{\frac{1}{\chi^i}}\chi^{\frac{i}{\chi^i}}2^{\frac{i+2}{\chi^i}}||\bar{u}||_{L^{\frac{2p\chi^i}{p-2}}(B_{r_i})}, \textrm{ for $i\geq0$.}
\end{equation}
Iterating this inequality we obtain for any $\frac{1}{2}\leq r<R\leq 1$, and for some constant $C_{6.3}$ independent of $r$, $R$,
\begin{equation}
\begin{split}
&||\bar{u}||_{L^{\infty}(B_r)}\leq \frac{C_{6.3}}{(R-r)^{\sum_{i\geq0}\chi^{-i}}}||\bar{u}||_{L^{\frac{2p}{p-2}}(B_R)}=\frac{C_{6.3}}{(R-r)^{\frac{\chi}{\chi-1}}}||\bar{u}||_{L^{\frac{2p}{p-2}}(B_R)}\\
&\leq\frac{C_{6.3}}{(R-r)^{\frac{\chi}{\chi-1}}}||\bar{u}||_{L^1(B_R)}^{\frac{p-2}{2p}}||\bar{u}||_{L^{\infty}(B_R)}^{\frac{p+2}{2p}}\leq\frac{1}{2}||\bar{u}||_{L^{\infty}(B_R)}+\frac{2^{\frac{p+2}{p-2}}C_{6.3}^{\frac{2p}{p-2}}}{(R-r)^{\frac{2\chi p}{(\chi-1)(p-2)}}}||\bar{u}||_{L^1(B_R)}.
\end{split}
\end{equation}
The desired conclusion now follows from the following lemma applied to $f(r)=||\bar{u}||_{L^{\infty}(B_r)}$, which is a special case of Lemma 4.3 in \cite{H-L}.
\end{proof}
\begin{lem}
Let $f:[\frac{1}{2},1]\rightarrow\bR$ be nonnegative, monotone increasing, such that there exists $M>0$, $\alpha>0$, such that for any $\frac{1}{2}\leq r<R\leq 1$, it holds
$$
f(r)\leq \frac{1}{2}f(R)+\frac{M}{(R-r)^{\alpha}}.
$$
Then for some $C_{\alpha}>0$ depending only on $\alpha$, we have
$$
f(\frac{1}{2})\leq C_{\alpha}M.
$$
\end{lem}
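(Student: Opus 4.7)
The plan is to prove this by a standard geometric iteration: choose a sequence of radii $r_i \in [\tfrac{1}{2},1]$ with $r_0 = \tfrac{1}{2}$ and $r_i \nearrow 1$ along a geometric rate that is slow enough to beat the $(R-r)^{-\alpha}$ factor when weighted by the $\tfrac{1}{2}$ in front of $f(R)$.

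Concretely, pick $\tau \in (2^{-1/\alpha}, 1)$ so that the ratio $q := \tfrac{1}{2}\tau^{-\alpha}$ lies in $(0,1)$; a convenient explicit choice is $\tau = 2^{-1/(2\alpha)}$, yielding $q = 2^{-1/2}$. Set $r_i := 1 - \tfrac{1}{2}\tau^i$, so that $r_0 = \tfrac{1}{2}$, $r_i \to 1$, and
$$
r_{i+1} - r_i = \tfrac{1}{2}\tau^i(1-\tau).
$$
Applying the hypothesis with $(r,R) = (r_i, r_{i+1})$ gives
$$
f(r_i) \leq \tfrac{1}{2}\, f(r_{i+1}) + M\left(\tfrac{2}{1-\tau}\right)^{\!\alpha}\!\tau^{-i\alpha}.
$$

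Iterating this $n$ times and collecting the geometric factors produces
$$
f\bigl(\tfrac{1}{2}\bigr) \;\leq\; 2^{-n} f(r_n) \;+\; M\left(\tfrac{2}{1-\tau}\right)^{\!\alpha}\sum_{i=0}^{n-1} q^i.
$$
Since $f$ is monotone increasing on $[\tfrac{1}{2},1]$, we have $f(r_n) \leq f(1) < \infty$, so the first term tends to $0$ as $n \to \infty$. The series converges to $(1-q)^{-1}$, which together with the prefactor is a constant $C_\alpha$ depending only on $\alpha$. Letting $n \to \infty$ yields $f(\tfrac{1}{2}) \leq C_\alpha M$, as desired.

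There is no real obstacle here; the only subtlety is the role of monotonicity (used to bound $f(r_n)$ uniformly so that the $2^{-n}$ term vanishes in the limit) and the constraint $\tau^\alpha > \tfrac{1}{2}$ (required for the geometric series to converge), both of which are built into the setup above.
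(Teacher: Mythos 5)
Your argument is correct and is exactly the standard geometric-iteration proof of this lemma (Han--Lin, \emph{Elliptic partial differential equations}, Lemma 4.3), which the paper cites without reproducing. The choice of $\tau\in(2^{-1/\alpha},1)$, the telescoping of the $\tfrac12$-factors against $\tau^{-i\alpha}$, and the use of monotonicity to kill the $2^{-n}f(r_n)$ term in the limit are precisely the ingredients of that proof, so there is nothing to add.
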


Next we show that when $n=2$, for the solution to (\ref{csck-1}) and (\ref{csck-2}), $|\nabla\phi|$ locally bounded implies $G$ is locally bounded from above. More precisely,
\begin{prop}
Let $\phi$ be a smooth solution to (\ref{csck-1}), (\ref{csck-2}) in $B_1\subset\mathbb{C}^2$ such that $|\nabla\phi|$ is bounded. Then for some constant $C_{6.4}$, we have
\begin{equation}
e^G\leq C_{6.4}\textrm{ in $B_{\frac{1}{2}}$.}
\end{equation}
Here $C_{6.4}$ depends only on $||\nabla\phi||_0$ and $\underline{R}$.
\end{prop}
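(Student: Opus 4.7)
\medskip
\noindent\textbf{Proof plan.} The idea is to localize the maximum principle argument of Theorem \ref{t2.1} by inserting a cutoff, and then observe that the restriction to $n=2$ is exactly what makes the cutoff error terms absorbable. Indeed, a crucial quantity in any such localization is $\mathrm{tr}_\phi g = \sum_i \phi_{i\bar i}^{-1}$, which in dimension two collapses to
\[
\mathrm{tr}_\phi g \;=\; \frac{\phi_{1\bar 1}+\phi_{2\bar 2}}{\phi_{1\bar 1}\phi_{2\bar 2}} \;=\; \frac{\Delta\phi}{e^G},
\]
a quantity \emph{linear} in $\Delta\phi$. In higher dimensions one only has $\mathrm{tr}_\phi g \lesssim (\Delta\phi)^{n-1}/e^G$, which is too singular to be absorbed into the good $\Delta\phi$ term that the maximum principle yields. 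This is the sole place where $n=2$ enters.

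\medskip
\noindent\textbf{Setup.} Choose $\eta\in C_c^\infty(B_1)$ with $\eta\equiv 1$ on $B_{1/2}$, $0\le\eta\le1$, and bounds on $|\nabla\eta|$, $|\nabla^2\eta|$. Since $|\nabla\phi|\le C_0$ on $B_1$, we may add a constant to $\phi$ so that it is bounded on $B_1$; this does not affect the equations. Following Theorem \ref{t2.1} (simplified by the fact that the background is Euclidean, so all curvature terms of $g$ vanish and only $\underline R$ survives), set
\[
H_0 \;=\; e^{G-\lambda\phi}\bigl(K+|\nabla\phi|^2\bigr), \qquad H \;=\; \eta^2\, H_0,
\]
with $\lambda$ and $K$ chosen large in terms of $\underline R$, $C_0$, exactly as in the proof of Theorem \ref{t2.1}, so that the computation there yields
\[
\Delta_\phi H_0 \;\ge\; e^{G-\lambda\phi}\!\left(-C_1 + \tfrac14(n+\Delta\phi)\right).
\]
Here $C_1$ depends only on $\underline R$, $\lambda$, $K$, and $C_0$. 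Let $p_0\in B_1$ be where $H$ attains its (finite) maximum; if $\eta(p_0)=0$ we are done trivially, so assume $\eta(p_0)>0$.

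\medskip
\noindent\textbf{Maximum principle with cutoff.} From $\nabla H(p_0)=0$ we get $\nabla H_0 = -2H_0\nabla\eta/\eta$ at $p_0$. Expanding,
\[
0 \;\ge\; \Delta_\phi H(p_0) \;=\; \eta^2\Delta_\phi H_0 + 2\eta H_0\Delta_\phi\eta - 6 H_0|\nabla_\phi\eta|^2.
\]
Dividing by $H_0$, bounding $|\Delta_\phi\eta|\le|\nabla^2\eta|_\infty\,\mathrm{tr}_\phi g$ and $|\nabla_\phi\eta|^2\le|\nabla\eta|_\infty^2\,\mathrm{tr}_\phi g$, and plugging in the lower bound for $\Delta_\phi H_0$ yields
\[
\frac{\eta^2\,(n+\Delta\phi)}{4(K+|\nabla\phi|^2)} \;\le\; C_1\eta^2 + C_\eta\,\mathrm{tr}_\phi g
\]
at $p_0$, with $C_\eta$ depending on derivatives of $\eta$.

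\medskip
\noindent\textbf{Closing the loop via $n=2$.} Inserting $\mathrm{tr}_\phi g = \Delta\phi/e^G$ and rearranging,
\[
\eta^2(\Delta\phi)\!\left(1 - \frac{4C_\eta(K+|\nabla\phi|^2)}{\eta^2 e^G}\right) \;\le\; 4C_1\eta^2(K+|\nabla\phi|^2).
\]
At $p_0$, either $\eta^2 e^G \le 8 C_\eta(K+C_0^2)$ (and we are done, since $H=\eta^2 e^{G-\lambda\phi}(K+|\nabla\phi|^2)$ is then bounded there), or the bracket is $\ge 1/2$, giving $\Delta\phi(p_0)\le C'$, whence by AM--GM $e^{G/2}\le\tfrac12\Delta\phi\le C'/2$. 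Either way $\eta^2 e^G(p_0)$ is bounded by a constant depending only on $C_0$ and $\underline R$, hence $H\le C$ on $B_1$; restricting to $B_{1/2}$ where $\eta\equiv 1$ gives the claimed estimate $e^G\le C_{6.4}$.

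\medskip
\noindent\textbf{Anticipated obstacle.} The main subtlety is algebraic bookkeeping: one must verify that, among the many terms generated by differentiating $\eta^2 e^{G-\lambda\phi}(K+|\nabla\phi|^2)$ twice with respect to $\Delta_\phi$, every factor involving $\nabla\eta$ is estimated by $\mathrm{tr}_\phi g$ to the first power (and not higher), since this is what allows the final absorption in dimension two. The second, minor point is the choice of $K$ large relative to $|\nabla\phi|^2$ and $\lambda$ large relative to $\underline R$, so that Theorem \ref{t2.1}'s computation produces the clean lower bound $\tfrac14(n+\Delta\phi) - C_1$ used above.
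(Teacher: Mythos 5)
Your proof is correct and rests on the same two pillars as the paper's: a maximum-principle argument for an exponentially weighted $K+|\nabla\phi|^2$, and the dimension-two identity $\mathrm{tr}_\phi g = e^{-G}\Delta\phi$ that lets the localization error be absorbed into the good $\Delta\phi$ term (closed off, as you do, with AM--GM: $\Delta\phi \geq 2e^{G/2}$). The differences from the paper are in the choice of auxiliary objects rather than in the underlying ideas. You take $H_0 = e^{G-\lambda\phi}(K+|\nabla\phi|^2)$, directly mirroring Theorem~\ref{t2.1}, and localize by multiplying with a compactly supported cutoff $\eta^2$; your cutoff errors then enter linearly in $\mathrm{tr}_\phi g$, so no delicate exponent bookkeeping is needed. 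The paper instead takes $e^{\delta G}(K+|\nabla\phi|^2)$ with a small $\delta=1/8$ and localizes by subtracting the barrier $\eta(z)=(1-|z|^2)^{-1}$, for which $|\Delta_\phi\eta|\lesssim\eta^3\,\mathrm{tr}_\phi g$; at the interior maximum $\eta\le v$, so the error carries $v^3\lesssim e^{3\delta G}$, and the smallness $3\delta<1$ is precisely what makes that term beatable by the main $e^{\delta G}\Delta\phi$ term. Both devices are standard, and your choice arguably requires less of the ``$\delta$-small'' exponent gymnastics, at the cost of more bookkeeping of $\eta^2$ factors and the normalization $\phi(0)=0$ to control $e^{-\lambda\phi}$. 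One point to flag for completeness: you assert the Theorem~\ref{t2.1} lower bound $\Delta_\phi H_0 \geq e^{G-\lambda\phi}(-C_1 + \tfrac14(n+\Delta\phi))$ without redoing the computation; this does indeed hold after the substitutions $1+\varphi_{i\bar i}\to\phi_{i\bar i}$, $F\to G$, $n+\Delta\varphi\to\Delta\phi$ with all bisectional-curvature terms set to zero, but since the balance of the compact-case proof relies on a $\lambda/(1+\varphi_{i\bar i})$ term produced by $-\lambda\Delta_\varphi\varphi = -\lambda(n-\mathrm{tr}_\varphi g)$ which \emph{disappears} in the local setting ($\Delta_\phi\phi = n$ exactly), it is worth verifying that nothing is lost --- which it isn't, since the curvature term it was meant to cancel also disappears.
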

\begin{proof}
Let $0<\delta<1$ and $K>1$ to be determined. We will compute $\Delta_{\phi}(e^{\delta G}(|\nabla\phi|^2+K))$.
As before, for any point $p\in B_1$ we are considering, we can always do a unitary coordinate transform which makes $\phi_{i\bar{j}}(p)=\phi_{i\bar{i}}(p)\delta_{ij}$.
Under this coordinate, we can compute:
\begin{equation}
\begin{split}
&\Delta_{\phi}(e^{\delta G}(|\nabla\phi|^2+K))=e^{\delta G}(\delta^2|\nabla_{\phi}G|^2-\delta\underline{R})(|\nabla\phi|^2+K)+e^{\delta G}\Delta_{\phi}(|\nabla\phi|^2)\\
&+e^{\delta G}\delta\frac{G_i(|\nabla\phi|^2)_{\bar{i}}+G_{\bar{i}}(|\nabla\phi|^2)_i}{\phi_{i\bar{i}}}.
\end{split}
\end{equation}
Similar to the calculation in Theorem \ref{t2.1}, we can find:
\begin{align}
\label{6.35}
&\Delta_{\phi}(|\nabla\phi|^2)=\frac{|\phi_{ij}|^2}{\phi_{i\bar{i}}}+\Delta\phi+G_i\phi_{\bar{i}}+G_{\bar{i}}\phi_i\\
\label{6.36}
&(|\nabla\phi|^2)_i=\sum_j\phi_{ij}\phi_{\bar{j}}+\phi_{i\bar{i}}\phi_{\bar{i}}.
\end{align}
Hence we obtain
\begin{equation}\label{6.37}
\begin{split}
&\Delta_{\phi}(e^{\delta G}(|\nabla\phi|^2+K))
=\frac{e^{\delta G}}{\phi_{i\bar{i}}}|\delta\phi_j G_i+\phi_{ij}|^2+Ke^{\delta G}(\delta^2|\nabla_{\phi}G|^2-\delta\underline{R})-e^{\delta G}\delta\underline{R}|\nabla\phi|^2\\
&+e^{\delta G}\Delta\phi+e^{\delta G}(1+\delta)(G_i\phi_{\bar{i}}+G_{\bar{i}}\phi)\geq e^{\delta G}K\delta^2|\nabla_{\phi}G|^2+e^{\delta G}\Delta\phi-\delta\underline{R}e^{\delta G}|\nabla\phi|^2\\
&-\delta\underline{R}Ke^{\delta G}-\frac{1}{2}K\delta^2e^{\delta G}|\nabla_{\phi}G|^2-\frac{1}{2}\frac{(1+\delta)^2e^{\delta G}|\nabla\phi|^2\Delta\phi}{K\delta^2}.
\end{split}
\end{equation}
Now we choose $\delta=\frac{1}{8}$, and we choose $K$ sufficiently large so that $\frac{(1+\delta)^2|\nabla\phi|^2}{K\delta^2}<1$. Hence we obtain from (\ref{6.37}):
\begin{equation}
\Delta_{\phi}(e^{\delta G}(|\nabla\phi|^2+K))\geq \frac{1}{2}e^{\delta G}\Delta\phi-e^{\delta G}C_{6.5}.
\end{equation}
Here $C_9$ depends only on $\underline{R}$ and $||\nabla\phi||_0$. Define $\eta(z)=(1-|z|^2)^{-1}$ for $z\in B_1$. We show that $|\Delta_{\phi}\eta|\leq C_{6.6}\eta^3\sum_i\frac{1}{\phi_{i\bar{i}}}$.Indeed,
\begin{equation*}
\begin{split}
\Delta_{\phi}\eta&=\phi^{i\bar{j}}\partial_{i\bar{j}}(\eta)=\phi^{i\bar{j}}\bigg((1-|z|^2)^{-2}\delta_{ij}+2(1-|z|^2)^{-3}\bar{z}_iz_j\bigg)\\
&=\phi^{i\bar{j}}\eta^3\big((1-|z|^2)\delta_{ij}+2\bar{z}_iz_j\big).
\end{split}
\end{equation*}
From this the claim follows easily.
Denote $v=e^{\delta G}(|\nabla\phi|^2+K)$. Suppose the function $v-\eta$ achieves maximum at $p\in B_1$. There are two possibilities:

Suppose $v(p)-\eta(p)\leq0$, then we immediately conclude that 
$$
v(z)\leq \eta(z)\leq\frac{4}{3},\textrm{ for any $z\in B_{\frac{1}{2}}$.}
$$
Then we are done.

Suppose otherwise $v(p)-\eta(p)\geq0$, then we know at $p$:
\begin{equation}
\begin{split}
0&\geq\Delta_{\phi}(v-\eta)(p)\geq\frac{1}{2}e^{\delta G}\Delta\phi-e^{\delta G}C_{6.5}-C_{6.6}\eta^3\sum_i\frac{1}{\phi_{i\bar{i}}}\\&\geq\frac{1}{2}e^{\delta G}\Delta\phi-C_{6.6}v^3e^{-G}\Delta\phi-e^{\delta G}C_{6.5}\geq\frac{1}{2}e^{\delta G}\Delta\phi-C_{6.7}e^{-(1-3\delta)G}\Delta\phi-e^{\delta G}C_{6.5}.
\end{split}
\end{equation}
In the third inequality above, we used that $\sum_i\frac{1}{\phi_{i\bar{i}}}=e^{-G}\Delta\phi$, which is true only in dimension 2.
Also we used that at $p$, $\eta\leq v$.

Suppose at $p$, we have $\frac{1}{4}e^{\delta G}\leq C_{6.7}e^{-(1-3\delta )G}$, this immediately gives a bound for $e^G$, hence $v$ at $p$. Then we are done. 

Suppose otherwise, then we have at $p$
\begin{equation}
0\geq e^{\delta G}\frac{1}{4}\Delta\phi-e^{\delta G}C_{6.5}\geq e^{\delta G}(\frac{1}{4}e^{\frac{G}{2}}-C_{6.5}).
\end{equation}
Then we also get an estimate for $e^G$ at $p$. So we are done as well.
\end{proof}

\noindent Xiuxiong Chen\\
University of Science and Technology of China and Stony Brook University\\

\noindent Jingrui Cheng\\
University of Wisconsin at Madison.

\end{document}